\DeclarePairedDelimiter\ceil{\lceil}{\rceil}
\DeclarePairedDelimiter\floor{\lfloor}{\rfloor}
\theoremstyle{plain}
\newtheorem{theorem}{Theorem}[section]
\newtheorem{corollary}[theorem]{Corollary}
\newtheorem{lemma}[theorem]{Lemma}
\newtheorem{proposition}[theorem]{Proposition}
\theoremstyle{definition}
\newtheorem{definition}[theorem]{Definition}
\theoremstyle{remark}
\newtheorem{remark}{Remark}
\newcommand\supp{\mathop{\operatorname{supp}}}
\newcommand\dist{\operatorname{dist}}
\title{Asymptotes of macroscopic observables in Gibbs measures of general interacting particle systems}
\author{David Padilla-Garza}
\begin{document}

\maketitle

\begin{abstract}
This paper studies the Gibbs measure of an interacting particle system with a general interaction kernel at various temperature regimes. We are particularly interested in fine features of the convergence to the mean-field density as the number of particles tends to infinity. Our main results are concentration bounds, and estimates on the Laplace transform of fluctuations. The main technique is a regularization procedure for general interaction kernels, based on an associated parabolic flow. 
\end{abstract}

\section{Introduction and motivation}

We will study an interacting particle system on the torus, in which the particles interact via a repulsive kernel, and are confined by an external potential. This is modeled by the Hamiltonian
\begin{equation}
\label{eq:hamiltonian}
    \mathcal{H}_{N}(X_{N}) = \sum_{i \neq j} g(x_{i} - x_{j}) + N \sum_{i=1}^{N} V(x_{i}),
\end{equation}
where $g: \mathbf{T}^{d} \to \mathbf{R}$ is the repulsive kernel, $V: \mathbf{T}^{d} \to \mathbf{R}$ is the confining potential, $N$ is the number of particles, $X_{N} = (x_{1}, ... , x_{N}) \in \mathbf{T}^{d \times N}$, and the $d-$dimensional torus of length $T$,$\mathbf{T}^{d}$ is defined as $\mathbf{T}^{d}:= \mathbb{R}^{d} / \sim$, where the equivalence relation $\sim$ is defined as $x \sim y$ if and only if $x - y \in T\mathbf{Z}^{d}$. We are interested in the behavior for large but finite $N$. A system modeled by equation \eqref{eq:hamiltonian} will be called an interacting particle system, or many-particle system. \\

Interacting particle systems are also widely studied in $\mathbf{R}^{d}$, not only $\mathbf{T}^{d}$. In this case, a frequent form of the repulsive interaction $g$ is given by
\begin{equation}
\label{Coulombcase}
\begin{cases}
g(x)=\frac{c_{d}}{|x|^{d-2}} \text{ if } d \geq 3, \\
g(x)=-c_{2} \log(|x|) \text{ if } d = 2,
\end{cases}
\end{equation}
where $c_{d}$ is such that
\begin{equation}
    -\Delta g  = \delta_{0}
\end{equation}
for all $d \geq 2$ \footnote{There is an explicit formula for $c_{d}$, but it is not relevant for our purpose.}. We will refer to this setting as the Euclidean Coulomb case. Another frequent form of $g$ is given by 
\begin{equation}\label{Rieszcase}
    g(x) = \frac{c_{d,s}}{|x|^{d-2s}},
\end{equation}
with $0<s<\min\{ \frac{d}{2},1 \}$,  $d \geq 1$, and $c_{d,s}$ such that
\begin{equation}
    (- \Delta)^{s} g = \delta_{0}.\footnote{There is an explicit formula for $c_{d,s}$, but it is not relevant for our purpose.}
\end{equation}
We will refer to this setting as the Euclidean Riesz case. \\

Motivated by this, we may consider Coulomb and Riesz kernels on the torus as well. In this case, we work with $g: \mathbf{T}^{d} \to \mathbf{R}$ such that
\begin{equation}
    (-\Delta g)^{s} = \delta_{0}.
\end{equation}
We will refer to this setting as a periodic Riesz case for $0<s < \min\{\frac{d}{2}, 1\}$, and as a periodic Coulomb case for $s=1$. \\

Finally, another widely studied interaction in $1-$dimensional Euclidean space is 
\begin{equation}
    g(x) = - \log |x|.
\end{equation}
We will refer to this setting as the $1d$ log case.

At zero temperature, the particles will arrange themselves into a configuration that minimizes the Hamiltonian. However, we may also study interacting particle systems at positive temperature. In this case, the position of the particles is a random variable in $\mathbf{T}^{d \times N}$. The density of the random variable is given by the Gibbs measure,
\begin{equation}
   \mathrm d \mathbf{P}_{N, \beta} (X_{N}) = \frac{1}{Z^{V}_{N, \beta}} \exp \left( - \beta \mathcal{H}_{N}(X_{N}) \right) \, \mathrm d X_{N},
\end{equation}
where
\begin{equation}
\label{eq:partfunc}
    Z^{V}_{N, \beta} = \int_{\mathbf{T}^{d \times N}} \exp \left( - \beta \mathcal{H}_{N}(X_{N}) \right) \, \mathrm d X_{N}
\end{equation}
is the partition function, and $\beta >0$ is the inverse temperature. We will be particularly interested in settings in which $\beta$ is not fixed, but might depend on $N$. \\

Coulomb gases are related to vortices in the Ginzburg-Landau model of superconductivity, Fekete points in approximation theory, and vortices in Bose-Einstein condensates \cite{serfaty2017microscopic}. Coulomb and $1-$d log gases are also related to Random Matrix Theory, since they can correspond to Ginibre ensembles, $\beta-$ensembles, and the eigenvalues of real, complex, or quaternionic Gaussian matrices. A (largely non-exhaustive) list of papers at the intersection of Coulomb gases and Random Matrix Theory is \cite{johansson1998fluctuations, bourgade2012bulk, bourgade2014universality, bourgade2014local, lambert2021mesoscopic, hardy2021clt, lambert2019quantitative}.  Coulomb gases are also connected to complex geometry and mathematical physics \cite{berman2014determinantal, rougerie2015incompressibility}.    

The study of a general interacting gas has received attention in the past \cite{georgii2011gibbs,ruelle1967variational,ruelle1968statistical} and continues to draw attention in current research \cite{garcia2019large, lambert2021poisson, chafai2014first, nguyen2022mean, rosenzweig2023global}. Apart from being a natural generalization of Coulomb and Riesz gasses, interacting particle systems with a general pair-wise interaction are also linked to AI and machine learning, more specifically to neural networks. One of the uses of neural networks is to accurately represent high-dimensional functions: given a high-dimensional function $f$, one goal of neural networks is to represent $f$ as
\begin{equation}
    f (x) := \lim_{n \to \infty} \frac{1}{n} \sum_{i=1}^{n} {\varphi}_{i} (x, \theta_{i}),
\end{equation}
where ${\varphi}_{i} (x, \theta_{i})$ are given functions, which depend parameters $\theta_{i}$. Stochastic Gradient Descent (SGD) is one of the most frequent algorithms used to determine the parameters $\theta_{i}$. However, even in spite of its frequent use, very few rigorous convergence results for SGD existed until recently. The approach in \cite{rotskoff2018neural,rotskoff2018parameters,rotskoff2022trainability,wen2024coupling} to derive such a rigorous convergence result is to model the evolution of the parameters $\theta_{i}$ under the SGD as a particle system with an evolution given by an SPDE. The authors are able to show a convergence rate of $O(n^{-1})$ to the mean-field limit by modelling parameters in this way. An additional benefit of this approach is that the energy landscape for the empirical measure is convex. The interaction between the particles in this model depends on on the functions ${\varphi}_{i}$, and on the error between the measurements and the approximating function. It is generally not a Coulomb or Riesz interaction, however. This approach is linked to particle systems for large $d$ as well, since the function $f$ is high dimensional. \\

For a large number of particles $N$, the energy of the system is well-described by the mean-field limit of the Hamiltonian, denoted by $\mathcal{E}_{V}$, and acting on probability measures as
\begin{equation}\label{meanfieldlimit}
    \mathcal{E}_{V} (\mu) =  \mathcal{E} (\mu) + \int_{\mathbf{T}^{d}} V 
    \mathrm d \mu, 
\end{equation}
where 
\begin{equation}
    \mathcal{E} (\mu) = \int_{\mathbf{T}^{d} \times \mathbf{T}^{d}} g(x-y) \, \mathrm d  \mu \otimes \mu(x,y)
\end{equation}
denotes the self-interaction of a measure. Under mild assumptions on $V$, the mean-field limit has a unique minimizer in the set of probability measures, denoted by $\mu^{V}_{\infty}$ and called the \emph{equilibrium measure}:
\begin{equation}
    \mu^{V}_{\infty}:= \underset{\mu \in \mathcal{P}(\mathbf{T}^{d})}{\rm argmin} \, \mathcal{E}_{V}(\mu),
\end{equation}
where $\mathcal{P}(\mathbf{T}^{d})$ denotes the set of probability measures on $\mathbf{T}^{d}$.

At a macroscopic level, the system is characterized by the empirical measure ${\rm emp}_{N}$, defined as 
\begin{equation}
 {\rm emp}_{N} (X_{N}) = \frac{1}{N} \sum_{i=1}^{N} \delta_{x_{i}}. 
\end{equation}
In order to ease notation, we will often write $ {\rm emp}_{N}$ instead of $ {\rm emp}_{N} (X_{N})$. 

A classical result (see, for example, \cite{serfaty2015coulomb}) in Coulomb gases is that $\frac{1}{N^{2}} \mathcal{H}_{N}$ converges ($\Gamma-$converges) to the mean-field limit $\mathcal{E}_{V}$. In particular, this implies convergence of minima and minimizers of $\mathcal{H}_{N}$ to minima and minimizers of $\mathcal{E}_{V}$.   

As long as the temperature is not too large, it is natural to expect that the behaviour of a general interacting particle system under the Gibbs measure is similar to the behaviour in the $0$ temperature case. This intuition is confirmed in \cite{garcia2019concentration, chafai2014first}, which proves that, as long as $\frac{1}{N} \ll \beta$, the empirical measure converges a.s. under the Gibbs measure, to the equilibrium measure. Furthermore, \cite{garcia2019concentration, chafai2014first} quantify rare events by means of an LDP. 

For large temperatures ($\beta = \frac{1}{N}$), the behavior of the system is significantly different. In this case, the effect of the entropy (temperature) is relevant at leading order. The system is no longer well described by the mean-field limit, but by a modified functional which incorporates the effect of entropy (temperature). This functional is denoted $ \mathcal{E}_{V}^{\theta}$, and acts on a measure $\mu$ as
\begin{equation}\label{thermallimit}
     \mathcal{E}_{V}^{\theta} (\mu) =  \mathcal{E}_{V} (\mu) + \frac{1}{ \theta} {\rm ent}[\mu], 
\end{equation}
with
\begin{equation}
{\rm ent}[\mu ]=
    \begin{cases}
    \int_{\mathbf{T}^{d}}  \log  ({\mathrm d \mu})  \, \mathrm d {\mu} \quad  \text{ if } \mu \text{ is absolutely continuous w.r.t. Lebesgue measure} \\
    \infty\quad  \text{ o.w.}
    \end{cases}
\end{equation} 
Under mild assumptions $ \mathcal{E}_{V}^{\theta}$ has a unique minimizer in $\mathcal{P}(\mathbf{T}^{d})$, denoted $\mu^{V}_{\theta}$ and called the \emph{thermal equilibrium measure}:
\begin{equation}
\label{def:theqmeas}
    \mu^{V}_{\theta}:= \underset{\mu \in \mathcal{P}(\mathbf{T}^{d})}{\rm argmin}\,  \mathcal{E}_{V}^{\theta} (\mu).
\end{equation}

In the large temperature regime $\beta = \frac{1}{N}$, the effect of the temperature is large enough that the particles do not converge to the equilibrium measure. Instead, the empirical measure converges a.s. under the Gibbs measure to the thermal equilibrium measure. Similarly to other regimes, it is possible to understand rare events in this system using an LDP \cite{garcia2019large}. Furthermore, the thermal equilibrium measure provides a better approximation to the system than the equilibrium measure, even if $\frac{1}{N} \ll \beta$ \cite{padilla2023concentration, garcia2022generalized}.  

The goal of this paper is to obtain a finer, more quantitative understanding of the convergence of the empirical measure to either the equilibrium or thermal equilibrium measure in the context of a general interaction. In this way, we extend many results in \cite{garcia2022generalized} to general interactions. 

One way to obtain such a finer, quantitative understanding is by concentration bounds. \cite{garcia2022generalized} proves upper bounds for the probability that a certain norm of the difference between the (thermal) equilibrium measure is larger than a certain threshold. This approach is not available in this setting, since, unlike the case of Coulomb or Riesz-type gases, there is no natural choice of norm. Instead, we deal directly with a test function: we look at the difference between the empirical and (thermal) equilibrium measures, integrated against a test function. This quantity will be called a fluctuation.

We will also be interested in the Laplace transform of fluctuations. Apart from serving to provide a finer and more precise understanding of the convergence to the mean-field density, the Laplace transform is an interesting object in its own right. We will obtain an upper bound for the Laplace transform in terms of a simple quantity that is valid as long as the test function is regular enough and also show that in some cases, such a bound is optimal.  

We will be interested in working with a singular and repulsive interaction kernel and a confining potential that are as general as possible. In this case, we obtain the leading-order terms in concentration bounds and the Laplace transform. Under additional hypotheses, we also obtain information about the rate of convergence. Generally speaking, this rate of convergence is given by an inverse power of $N$, and improves as a function of 3 factors:
\begin{itemize}
    \item[1.] The regularizing effect of $g$ (the mildness of the singularity). 

    \item[2.] The regularity of the (thermal) equilibrium measure.

    \item[3.] The regularity of the test function.  
\end{itemize}

The main technical contribution needed to achieve these estimates is a regularization procedure for general interaction kernels, which allows us to approximate the energy of a discrete probability measure by the energy of a continuous one. This regularization procedure is based on a parabolic equation associated with the interaction kernel. In the case of the Coulomb interaction, this parabolic equation is the heat equation. 

For the rest of the paper, we commit an abuse of notation by not distinguishing between a measure and its density. 

\section{Main results}

We now proceed to state the main results of this work. As mentioned in the introduction, part of the results will concern only the leading-order terms in systems with a very general class of interaction kernels. This class will be called \emph{weakly admissible}.  

\begin{definition}
 We call an interaction kernel $g$ \emph{weakly admissible} if $g$ satisfies:   
\begin{itemize}
    \item[1.] Symmetry.
    \begin{equation}
        g(x) = g(-x).
    \end{equation}
    
    \item[2.] Integrability.
    \begin{equation}
        g \in L^{1}(\mathbf{T}^{d}).
    \end{equation}
    
    \item[3.] Positive definiteness. For any $\mu \in TV(\mathbf{T}^{d})$,
    \begin{equation}
        \mathcal{E}(\mu) \geq 0,
    \end{equation}
    and $\mathcal{E}(\mu) = 0 \implies \mu =0$, where $TV(\mathbf{T}^{d})$ denotes the space of signed measures of bounded variation.
\end{itemize}
\end{definition}

There is little hope of obtaining more quantitative estimates about convergence for weakly admissible kernels since they must apply, in particular, to any Riesz kernel. Hence, most of the work will be about obtaining more quantitative estimates for kernels satisfying additional assumptions. This class will be called \emph{admissible}. 

\begin{definition}
\label{def:admissible}
We call an interactions kernel $g:\mathbf{T}^{d} \to \mathbf{R}$ \emph{admissible} if $g$ satisfies:
\begin{itemize}
    \item[1.] $g$ is symmetric, i.e. $g(x)=g(-x)$.

    \item[2.] There exist $\gamma, C >0$ such that, for $m \neq 0$,
    \begin{equation}
        \widehat{g}(m) \leq C |m|^{- \gamma},
    \end{equation}
    where $\widehat\cdot$ denotes the Fourier transform. \footnote{Note that the Fourier transform is well-defined since any admissible $g$ is integrable by assumption 5.}
    \item[3.] $\widehat{g}(m) >0$ for all $m \in \mathbf{Z}^{d}$, and there exist $\lambda, c >0$ such that for $m \neq 0$,
    \begin{equation}
        \widehat{g}(m) \geq c |m|^{- \lambda}.
    \end{equation}

    \item[4.] There exist $\epsilon>0$ and $C < \infty$ such that for all $z \in \mathbf{T}^{d}$ and $0<s < \epsilon$,
    \begin{equation}
        p(z, s) \geq -C,
    \end{equation}
    see equation \eqref{eq:kernelp} for the definition of $p(z,s)$. 

    \item[5.] There exists $C>0$ such that either
    \begin{itemize}
        \item[5 a)]  \begin{equation}
        |g(x)| \leq C \frac{1}{|x|^{s}}, \quad |\nabla g(x)| \leq C \frac{1}{|x|^{s+1}}
    \end{equation}
    for some $s \in (0,d)$, or 
        \item[5 b)] \begin{equation}
        |g(x)| \leq C |\log(|x|)|, \quad |\nabla g(x)| \leq C \frac{1}{|x|}.\footnote{With an abuse of notation, we are writing $|x|$ to denote $\dist(x,0)$.}
    \end{equation}
    \end{itemize}
   
\end{itemize}
\end{definition}

\begin{remark}[Notes on assumptions]
    Assumption 4 is technical, and so the exact definition is postponed until Section \ref{sect:regularization}. Consequently, assumption 4 is not easy to verify. However, it is not very restrictive: as $s$ tends to $0$, $p(\cdot, s)$ converges to a Dirac delta at $0$, so having $p(\cdot, s)$ unbounded below for arbitrarily small $s$ would be pathological behavior. Assumption 4 holds for common kernels, like Coulomb, and Riesz. It also holds for Riesz-type kernels (see \cite{garcia2022generalized,nguyen2022mean}).   Assumption 4 also holds if $h := \widehat{\left(\frac{1}{\widehat{g}}\right)}$ satisfies that $h >0$ a.e. and $h(x) \leq C |x|^{s}$ for some $s<2$. More generally, assumption 4 also holds for any kernel $g$ such that the inverse of $u \mapsto g \ast u$ satisfies a comparison principle, see Proposition \ref{prop:sufficient}. We do not know of any interaction for which assumption 4 can be shown \emph{not} to hold.

    Assumption 5 is only needed in the proof of the lower concentration bound, Theorem \ref{teo:conclow} item a). 

    Note that assumption 3 implies that any admissible kernel $g$ is positive definite, and assumption 5 implies that any admissible kernel $g$ is integrable. Therefore any admissible kernel is weakly admissible. 

\end{remark}

We will also work with a general class of confining potentials, which we now define. 

\begin{definition}
We call a confining potential $V$ \emph{admissible} if $V$ satisfies:
\begin{itemize}
    \item[1.] $V$ is l.s.c.
    \item[2.] $V \in L^{1}(\mathbf{T}^{d})$. 
\end{itemize}
\end{definition}
Note that, as a consequence of assumption 1, $V$ is bounded below, and therefore $\int_{\mathbf{T}^{d}} V \, \mathrm d \mu$ is well-defined for any probability measure $\mu$. 

As mentioned in the introduction, the rate of convergence in admissible kernels will depend on the regularity of the (thermal) equilibrium measure, and of the test function. This regularity is no Sobolev norm but is defined through the Fourier coefficients. 

\begin{definition}
    Given $\alpha \geq 0$, we define the fractional Sobolev seminorm $W^{\alpha}$ as 
    \begin{equation}
        \|\varphi\|_{W^{\alpha}} := \sum_{m \in \mathbf{Z}^{d}} \left| \widehat{\varphi}(m)|m|^{\alpha} \right|.
    \end{equation}

    We define the space $W^{\alpha}(\mathbf{T}^{d})$ as the space of distributions with finite $W^{\alpha}$ seminorm.
\end{definition}

\begin{remark}
\label{rem:Holder}
    Note that for any distribution $\varphi$ and $\alpha >0$, $\|\varphi\|_{C^{\alpha}} \leq C \|\varphi\|_{W^{\alpha}}$, where $\|\varphi\|_{C^{\alpha}}$ denotes the Holder $\alpha-$seminorm. This can be easily verified since, for any $x, y \in \mathbf{T}^{d}$,
    \begin{equation}
        \begin{split}
            \frac{\varphi(x) - \varphi(y)}{|x-y|^{\alpha}} & \leq \sum_{m \in \mathbf{Z}^{d}} \frac{\widehat{\varphi}(m) \left| \exp\left( \frac{2 \pi m}{T} \cdot x \right) - \exp\left( \frac{2 \pi m}{T} \cdot y \right) \right|}{|x-y|^{\alpha}}\\
            &\leq C \sum_{m \in \mathbf{Z}^{d}} \left| \widehat{\varphi}(m)|m|^{\alpha} \right|.
        \end{split}
    \end{equation}
    In particular, $W^{\alpha}(\mathbf{T}^{d})$ is contained in the space of continuous functions for any $\alpha>0$.   
\end{remark}

To obtain concentration bounds, and estimates on the Laplace transform of fluctuations, we will need preliminary estimates on the partition function. These estimates will come in two kinds: comparisons with the equilibrium measure, and comparisons with the thermal equilibrium measure. 

\begin{lemma}
\label{lem:partfunc}
Assume that $V$ and $g$ are admissible and let $\alpha \geq \gamma$. Define $\theta = N \beta$. Then
\begin{itemize}
\item[a)] \begin{equation}
    -\mathcal{E}_{V}^{\theta}(\mu^{V}_{\theta}) + \frac{1}{N} \mathcal{E}(\mu^{V}_{\theta}) \leq \frac{\log Z^{V}_{N, \beta}}{N^{2} \beta} \leq -\mathcal{E}_{V}^{\theta}(\mu^{V}_{\theta}) +C\left( 1 +\|\mu^{V}_{\theta}\|_{W^{\alpha - \gamma}} \right)N^{p^{*}},
\end{equation}
where 
\begin{equation}
    p^{*} = p^{*} (\alpha, \lambda, \gamma) := \max \left\{ -\frac{\gamma}{d}, \left( \frac{\lambda}{\alpha} - \frac{\lambda d}{\alpha \gamma} -1 \right)^{-1} \right\}.
\end{equation}

    \item[b)] If $\lim_{N \to \infty} \theta = \infty$, then
    \begin{equation}
    \begin{split}
    &-\mathcal{E}_{V}(\mu^{V}_{\infty}) - \frac{1}{\theta} {\rm ent}[\mu^{V}_{\infty}] + \frac{1}{N} \mathcal{E}(\mu^{V}_{\infty}) \leq \frac{\log Z^{V}_{N, \beta}}{N^{2} \beta} \leq\\
    &-\mathcal{E}_{V}(\mu^{V}_{\infty})+C\left( 1+\|\mu^{V}_{\infty}\|_{W^{\alpha - \gamma}} \right)N^{p^{*}}  - \frac{1}{\theta} \left( \log |\Sigma| + o_{N}(1)  \right).
    \end{split}
\end{equation}
\end{itemize}

\end{lemma}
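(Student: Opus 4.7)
The strategy is to use the Euler-Lagrange equation for $\mu^V_\theta$ to rewrite $Z^V_{N,\beta}$ in a form that exposes a modulated energy. Since $\mu^V_\theta$ minimizes $\mathcal{E}^\theta_V$ subject to $\int \mu = 1$, it satisfies the pointwise equation
$$2 g \ast \mu^V_\theta + V + \frac{1}{\theta}\bigl(\log \mu^V_\theta + 1\bigr) = c_\theta,$$
with $c_\theta = \mathcal{E}^\theta_V(\mu^V_\theta) + \mathcal{E}(\mu^V_\theta) + 1/\theta$ (obtained by integrating against $\mu^V_\theta$). Substituting the resulting expression for $V$ into $\mathcal{H}_N$ and using $\theta = N\beta$ rearranges the Hamiltonian as
$$\mathcal{H}_N(X_N) = F_N(X_N,\mu^V_\theta) + N^2 \mathcal{E}^\theta_V(\mu^V_\theta) - \frac{1}{\beta}\sum_{i=1}^N \log \mu^V_\theta(x_i),$$
where $F_N(X_N,\mu) := \sum_{i\neq j} g(x_i-x_j) - 2N\sum_i g\ast\mu(x_i) + N^2 \mathcal{E}(\mu)$ is the modulated energy relative to $\mu$. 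Exponentiating and integrating in $X_N$ produces the clean representation
$$Z^V_{N,\beta} = \exp\bigl(-\beta N^2 \mathcal{E}^\theta_V(\mu^V_\theta)\bigr)\,\mathbb{E}_{X_N \sim (\mu^V_\theta)^{\otimes N}}\bigl[\exp(-\beta F_N(X_N,\mu^V_\theta))\bigr],$$
which is the backbone of both inequalities in part (a).

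The lower bound follows at once from Jensen applied to the concave $\log$: $\log \mathbb{E}[e^{-\beta F_N}] \geq -\beta\,\mathbb{E}[F_N]$. A direct moment computation in the product measure, using $\mathbb{E}[g(x_i-x_j)] = \mathcal{E}(\mu^V_\theta)$ for $i\neq j$ and $\mathbb{E}[g\ast \mu^V_\theta(x_i)] = \mathcal{E}(\mu^V_\theta)$, gives $\mathbb{E}[F_N(X_N,\mu^V_\theta)] = N(N-1)\mathcal{E}(\mu^V_\theta) - 2N^2\mathcal{E}(\mu^V_\theta) + N^2\mathcal{E}(\mu^V_\theta) = -N\mathcal{E}(\mu^V_\theta)$. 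Dividing by $N^2\beta$ reproduces the lower bound in (a).

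The upper bound reduces to a deterministic, $X_N$-uniform lower bound on the modulated energy of the form
$$F_N(X_N, \mu^V_\theta) \geq -C\,N^{2+p^*}\bigl(1+\|\mu^V_\theta\|_{W^{\alpha-\gamma}}\bigr).$$
Establishing this is the main technical obstacle of the paper. The plan, to be carried out in Section \ref{sect:regularization}, is to regularize ${\rm emp}_N$ at a Fourier scale $\eta$ via the parabolic semigroup associated with $g$, thereby decomposing $F_N$ into a nonnegative smoothed quadratic form, a self-interaction term of order $N g^{\eta}(0)$ controlled by the upper Fourier bound $\widehat{g}(m) \leq C|m|^{-\gamma}$, and a cross term between the mollification error and $\mu^V_\theta$ controlled by $\|\mu^V_\theta\|_{W^{\alpha-\gamma}}$ via the lower Fourier bound $\widehat{g}(m) \geq c|m|^{-\lambda}$. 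Optimizing $\eta$ against these competing errors yields the exponent $p^*$. Granted this bound, exponentiating and taking expectation in the representation above immediately gives the claimed upper bound in (a).

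Part (b) follows the same blueprint with $\mu^V_\infty$ in place of $\mu^V_\theta$. The Euler-Lagrange condition is now the inequality $2g\ast \mu^V_\infty + V \geq c_\infty$, with equality on the support $\Sigma$ and $c_\infty = \mathcal{E}_V(\mu^V_\infty) + \mathcal{E}(\mu^V_\infty)$, so the analogous rearrangement yields only $\mathcal{H}_N \geq F_N(X_N, \mu^V_\infty) + N^2 \mathcal{E}_V(\mu^V_\infty)$ (no entropy term appears since $\mu^V_\infty$ does not minimize the entropic functional). For the lower bound, Jensen with trial measure $(\mu^V_\infty)^{\otimes N}$ produces the extra $-\frac{1}{\theta}{\rm ent}[\mu^V_\infty]$ coming from the relative entropy of $(\mu^V_\infty)^{\otimes N}$ against Lebesgue, and the same modulated-energy moment computation gives the stated lower bound. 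For the upper bound, combining the modulated energy bound with a localization of the remaining $dX_N$ integral to an $\epsilon$-neighborhood of $\Sigma^N$ (justified by the strict EL gap off $\Sigma$ and the assumption $\theta \to \infty$, which makes the contribution of any configuration with particles far from $\Sigma$ exponentially small) yields the $\frac{1}{\theta}\log|\Sigma|$ correction with an $o_N(1)$ remainder.
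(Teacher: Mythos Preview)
Your proposal is correct and follows essentially the same approach as the paper. Both arguments hinge on the (thermal) splitting formula to rewrite $Z^{V}_{N,\beta}$ as $\exp(-\beta N^{2}\mathcal{E}_{V}^{\theta}(\mu^{V}_{\theta}))$ times an expectation of $\exp(-\beta N^{2} F_{N})$ against a product reference measure, then obtain the lower bound by Jensen (equivalently, by plugging $(\mu^{V}_{\theta})^{\otimes N}$ or $(\mu^{V}_{\infty})^{\otimes N}$ into the Gibbs variational principle) and the upper bound from a deterministic lower bound on $F_{N}$ coming from the parabolic regularization.

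Two minor points of comparison. First, the paper phrases the upper bound through the Gibbs variational principle, lower-bounding $\mathcal{F}_{N,\beta}(\mu)$ for arbitrary $\mu$ and invoking $\mathrm{ent}[\mu\,|\,p_{N,\beta}^{\otimes N}]\geq 0$; this is exactly your ``bound the expectation by the supremum'' step in dual form. Second, for part~(b) you should not use the inequality $\mathcal{H}_{N}\geq F_{N}+N^{2}\mathcal{E}_{V}(\mu^{V}_{\infty})$ for the upper bound, since dropping $\zeta_{\infty}$ replaces $|\Sigma|$ by $|\mathbf{T}^{d}|$. The paper (and, implicitly, your ``localization'') keeps the full splitting formula so that the residual integral is $\bigl(\int_{\mathbf{T}^{d}} e^{-\theta\zeta_{\infty}}\bigr)^{N}=(z_{N,\beta})^{N}$, and then uses $z_{N,\beta}\to|\Sigma|$ by dominated convergence; this is cleaner than an $\epsilon$-neighborhood argument and gives the $-\tfrac{1}{\theta}(\log|\Sigma|+o_{N}(1))$ term directly.
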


The proof is found in Section \ref{sec:asymps}

We now move on to the main results in this work: concentration bounds and estimates on the Laplace transform of fluctuations. We will look at fluctuations of the empirical measure with respect to both the equilibrium and thermal equilibrium measures. The reason for looking at fluctuations with respect to the equilibrium measure is clear: it is the a.s. limit of the empirical measure as long as $\frac{1}{N} \ll \beta$. Our reason for looking at fluctuations with respect to the thermal equilibrium measure is that, on the one hand, in the large temperature regime ($\beta = \frac{1}{N}$), it is the a.s. limit of the empirical measure. Furthermore, in the Coulomb and Riesz cases, it is known to provide a better approximation than the equilibrium measure to the empirical measure \cite{padilla2023concentration,garcia2022generalized}. 

\begin{definition}
Given a continuous function $f: \mathbf{T}^{d} \to \mathbf{R}$, we define the random variables
\begin{equation}
\begin{split}
    {\rm Fluct}_{\infty}[f]  &:= \int_{\mathbf{T}^{d}} f d \left( {\rm emp}_{N} - \mu^{V}_{\infty} \right),\\
    {\rm Fluct}_{\theta}[f]  &:= \int_{\mathbf{T}^{d}} f d \left( {\rm emp}_{N} - \mu^{V}_{\theta} \right).
\end{split}    
\end{equation}
\end{definition}

We now introduce an operator that will play a major role in the rest of the paper. 
\begin{definition}
Given $\alpha \in \mathbf{R}$ we define the operator $h_{\alpha}$ on the space of tempered distributions, in terms of Fourier coefficients as:
\begin{itemize}
    \item If either $\alpha>0$ or $m \neq 0$,
    \begin{equation}
    \widehat{h_{\alpha}(\mu)} (m) := \widehat{g}^{\alpha}\widehat{\mu}(m).
\end{equation}

    \item If $\alpha \leq 0$ and $m=0$ 
\end{itemize}
\begin{equation}
    \widehat{h_{\alpha}(\mu)} (0):= 0.
\end{equation}

We denote $h(\mu) = h_{1}(\mu)$. Note that $h(\mu) = g \ast \mu$. Note that this operator is well defined for an admissible $g$, since for smooth $f$,  $\sum_{m \in \mathbf{Z}^{d}} |\widehat{f}|^{k}(m) < \infty$ for all $k>0$.  
\end{definition}

We now state the main results of this paper, beginning with concentration bounds. Unless otherwise stated, $C$ will denote a generic constant that depends only on $V$ and $g$. 

\begin{theorem}[Concentration bounds]
\label{teo:concup}
Let $f \in W^{\kappa}(\mathbf{T}^{d})$, let $r \in \mathbf{R}^{+}$. Define $\theta = N \beta$. Then
    If $g$ is weakly admissible and $\lim_{N \to \infty} \theta = \infty$, then
    \begin{equation}
    \label{eq:upboundweak}
        \limsup_{N \to \infty} \frac{-1}{N^{2} \beta} \log \left( \mathbf{P}_{N, \beta} \left( \left| {\rm Fluct}_{\infty}[f]  \right| \geq r \right) \right) \leq r^{2} \left\| h_{-\frac{1}{2}} (f) \right\|_{L^{2}}^{2}.
    \end{equation}

    If $g$ is admissible, then for any $\alpha >0$,
    \begin{equation}
    \label{eq:conctherm}
    \mathbf{P}_{N, \beta} \left( \left| {\rm Fluct}_{\theta}[f]  \right| \geq r \right) \leq \exp \Bigg( -N^{2} \beta \Bigg[ \frac{ \left(r- C N^{q^{*}}\|f\|_{W^{\kappa}} \right )^{2} }{\left\| h_{-\frac{1}{2}}(f) \right\|^{2}_{L^{2}}} - \left(   C  \left( 1+ \|\mu^{V}_{\theta}\|_{W^{\alpha - \lambda}} \right)N^{q^{*}}\right)  \Bigg] \Bigg),
    \end{equation}
        where
    \begin{equation}
       q^{*}(\alpha, \kappa, \lambda, \gamma) := p^{*}(\min\{\alpha, \kappa\}, \lambda, \gamma).
    \end{equation}
    If, in addition, $\lim_{N \to \infty} \theta = \infty$, then for any $\alpha >0$,
    \begin{equation}
    \label{eq:conceq}
    \begin{split}
    &\mathbf{P}_{N, \beta} \left( \left| {\rm Fluct}_{\infty}[f]  \right| \geq r \right)\\
    &\leq \exp \Bigg( -N^{2} \beta \Bigg[ \frac{ \left(r- C N^{q^{*}}\|f\|_{W^{\kappa}} \right )^{2} }{\left\| h_{-\frac{1}{2}}(f) \right\|^{2}_{L^{2}}} -    C  \left( 1+ \|\mu^{V}_{\infty}\|_{W^{\alpha - \lambda}} \right)N^{q^{*}}  \Bigg]  \\
    &\ \ \ \ + N\left( {\rm ent}[\mu^{V}_{\infty}] - \log \left|\Sigma\right| + o_{N}(1) \right) \Bigg),
    \end{split}
    \end{equation}
    where $o(1)$ is independent of $f$.   
\end{theorem}

The proof is found in Section \ref{sec:conccentration}.

A natural question is whether the upper bounds proved in Theorem \ref{teo:concup} are optimal. We address this question of optimality in the next theorem, by proving lower bounds on the corresponding probability. Theorem \ref{teo:conclow} shows that, under mild additional conditions, the concentration bounds of Theorem \ref{teo:concup} are optimal to leading order. 

\begin{theorem}[Concentration lower bounds]
\label{teo:conclow}
Let $f \in W^{\kappa}(\mathbf{T}^{d})$, let $r \in \mathbf{R}^{+}$. Define $\theta = N \beta$. Then
\begin{itemize}
    \item[a)] If $f$ satisfies:
    \begin{itemize}
        \item[i)] ${\rm supp}(h_{-1}(f)) \subset \Sigma$,
        \item[ii)] There exists $r_{0} \in \mathbf{R}^{+}$ such that for any $r \in (0,r_{0})$, $\mu^{V}_{\infty} + r \frac{h_{-1}(f)}{\| h_{-\frac{1}{2}}(f) \|^{2}_{L^{2}}} \geq 0$,
    \end{itemize}
    $g$ is weakly admissible, and $\lim_{N \to \infty} \theta = \infty$, then for any $r \in (0,r_{0})$,
    \begin{equation}
    \label{eq:loboundweak}
        \liminf_{N \to \infty} \frac{-1}{N^{2} \beta} \log \left( \mathbf{P}_{N, \beta} \left( \left| {\rm Fluct}_{\infty}[f]  \right| \geq r \right) \right) \geq r^{2} \left\| h_{-\frac{1}{2}} (f) \right\|_{L^{2}}^{2}.
    \end{equation}
    If, in addition, $g$ is admissible, then for any $m > m^{*} := \max \left\{ p^{*}, \frac{s - d}{d} \right\}$ ($s$ is taken to be $0$ in the case of assumption 5 b) of Definition \ref{def:admissible}),
    \begin{equation}
    \label{eq:lowbound1}
    \begin{split}
    &\mathbf{P}_{N, \beta} \left( \left| {\rm Fluct}_{\infty}[f]  \right| \geq r \right) \\
    \geq  &\exp \Bigg( -N^{2}\beta \left[ \left(r + C N^{- \alpha p }  \|f\|_{W^{\alpha}}\right)^{2} \| h_{-\frac{1}{2}}(f) \|_{L^{2}}^{-2} +C(1+\|\mu^{V}_{\infty}\|_{W^{\alpha - \gamma}} ) N^{m}  \right] \\
    &\ \ \ \ \ \ \ \ - N \left(  R(f,r) + o(1) \right) \Bigg),
    \end{split}
    \end{equation}
    where 
    \begin{equation}
        R(f,r) := \mathrm{ent}\left[\mu^{V}_{\infty} + r \frac{h_{-1}(f)}{\| h_{-\frac{1}{2}}(f) \|^{2}_{L^{2}}} \Bigg| \frac{1}{|\Sigma|}\mathbf{1}_{\Sigma}\right].
    \end{equation}
      \item[b)] If there exists $r_{0} \in \mathbf{R}^{+}$ such that for any $r \in (0,r_{0})$, $\mu^{V}_{\infty} + r \frac{h_{-1}(f)}{\| h_{-\frac{1}{2}}(f) \|^{2}_{L^{2}}} \geq 0$, and $g$ is admissible, then for any $m > m^{*} := \max \left\{ p^{*}, \frac{s - d}{d} \right\}$ ($s$ is taken to be $0$ in the case of assumption 5 b) of Definition \ref{def:admissible}), then for any $r \in (0,r_{0})$,
    \begin{equation}
    \label{eq:lowbound2}
    \begin{split}
    &\mathbf{P}_{N, \beta} \left( \left| {\rm Fluct}_{\theta}[f]  \right| \geq r \right) \\
    \geq  &\exp \Bigg( -N^{2}\beta \left[ \left(r + C N^{- \alpha p }  \|f\|_{W^{\alpha}}\right)^{2} \| h_{-\frac{1}{2}}(f) \|_{L^{2}}^{-2} +C(1+\|\mu^{V}_{\theta}\|_{W^{\alpha - \gamma}} ) N^{m}  \right] \\
    &\ \ \ \ \ \ \ \ - N \left(    \widetilde{R}(f,r) + o(1) \right) \Bigg),
    \end{split}
    \end{equation}
     where 
    \begin{equation}
        \widetilde{R}(f,r) := \mathrm{ent}\left[\mu^{V}_{\theta} + r \frac{h_{-1}(f)}{\| h_{-\frac{1}{2}}(f) \|^{2}_{L^{2}}} \Bigg| \mu^{V}_{\theta} \right].
    \end{equation}
\end{itemize}
\end{theorem}

The proof is found in Section \ref{sec:conccentration}.

\begin{remark}
     At low temperature ($\beta$ constant or tending to $0$ slowly), we expect the equilibrium measure $\mu^{V}_{\infty}$ and the thermal equilibrium measure $\mu^{V}_{\theta}$ to be very similar (this has been proved in the Coulomb setting, see \cite{armstrong2022thermal}) and consequently ${\rm Fluct}_{\theta}[f]$ and ${\rm Fluct}_{\infty}[f]$ to be similar (their difference converging to $0$ rapidly with $N$). However, for large temperature ($\beta$ tending to $0$ rapidly) we expect $\mu^{V}_{\infty}$ and $\mu^{V}_{\theta}$ to be different, hence ${\rm Fluct}_{\theta}[f]$ and ${\rm Fluct}_{\infty}[f]$ to be less similar (their difference converging to $0$ slowly with $N$, or not converging to $0$ at all). This is the reason for stating results for both ${\rm Fluct}_{\infty}[f]$ and ${\rm Fluct}_{\theta}[f]$. 
\end{remark}

We now state the other main result of this paper, concerning the Laplace transform of fluctuations. 

\begin{theorem}[Laplace transform]
\label{teo:laplace}
Let $f \in W^{\kappa}(\mathbf{T}^{d})$. Define $\theta = N \beta$. Then
\begin{itemize}
    \item[a)] 
    If $g$ is weakly admissible and $\lim_{N \to \infty} \theta = \infty$ then
    \begin{equation}
    \label{eq:uplaplaceweak}
        \limsup_{N \to \infty} \frac{1}{N^{2} \beta } \log \left( \mathbb{E}_{\mathbf{P}_{N, \beta}} \left( \exp \left( N^{2} \beta r {\rm Fluct}_{\infty}[f]\right) \right) \right) \leq \frac{r^{2}}{4} \|h_{-\frac{1}{2}}(f)\|_{L^{2}}^{2}. 
    \end{equation}
    If $g$ is admissible, then for any $\alpha>0$,
    \begin{equation}
    \begin{split}
   &\log \left( \mathbb{E}_{\mathbf{P}_{N, \beta}} \left( \exp \left( N^{2} \beta r {\rm Fluct}_{\theta}[f]\right) \right) \right) \leq \\
   &N^{2} \beta \left[\frac{ r^{2} \left\| h_{-\frac{1}{2}}(f) \right\|^{2}_{L^{2}}}{4} -  C N^{q^{*}}\|f\|_{W^{\kappa}} r - C \left(1+ \|\mu^{V}_{\theta}\|_{W^{\alpha - \lambda}} \right) N^{q^{*}}  \right].   
   \end{split}
    \end{equation}
    If $\lim_{N \to \infty} \theta = \infty$, then for any $\alpha>0$,
    \begin{equation}
    \begin{split}
   &\log \left( \mathbb{E}_{\mathbf{P}_{N, \beta}} \left( \exp \left( N^{2} \beta r {\rm Fluct}_{\infty}[f]\right) \right) \right) \\
   & \leq  N^{2} \beta \left[\frac{ r^{2} \left\| h_{-\frac{1}{2}}(f) \right\|^{2}_{L^{2}}}{4} -  C N^{q^{*}}\|f\|_{W^{\kappa}} r - C  \left( 1+ \|\mu^{V}_{\infty}\|_{W^{\alpha - \lambda}} \right)N^{q^{*}} \right] \\
   & \ \ \ \ \  + N\left( {\rm ent}[\mu^{V}_{\infty}] - \log \left|\Sigma\right| + o_{N}(1) \right),   
   \end{split}
    \end{equation}
    where $o(1)$ is independent of $f$.

    \item[b)] If $f$ satisfies:
    \begin{itemize}
        \item[i)] ${\rm supp}(h_{-1}(f)) \subset \Sigma$
        \item[ii)] $\mu_{\infty}^{V} - r\frac{ h_{-1}(f)}{2} \geq 0$,
    \end{itemize}
    $g$ is weakly admissible, and $\lim_{N \to \infty} \theta = \infty$, then
    \begin{equation}
    \label{eq:lowlaplaceweak}
        \limsup_{N \to \infty} \frac{1}{N^{2} \beta } \log \left( \mathbb{E}_{\mathbf{P}_{N, \beta}} \left( \exp \left( N^{2} \beta r {\rm Fluct}_{\infty}[f]\right) \right) \right) \geq \frac{r^{2}}{4} \|h_{-\frac{1}{2}}(f)\|_{L^{2}}^{2}. 
    \end{equation}
    If, in addition, $g$ is admissible, then
    \begin{equation}
    \begin{split}
   \log \left(  \mathbb{E}_{\mathbf{P}_{N, \beta}} \left( \exp \left( N^{2} \beta r {\rm Fluct}_{\infty}[f]\right) \right) \right) \geq   N^{2} \beta \Bigg[ \frac{r^{2}}{4} \| h_{-\frac{1}{2}}(f) \|_{L^{2}}^{2} -\\
    -C\left( 1+ \|\mu^{V}_{\infty}\|_{W^{\alpha - \gamma}} \right) N^{p^{*}}  + \frac{1}{\theta} \left( {\rm ent}[\mu^{V}_{\infty}] - \log |\Sigma| + o_{N}(1)  \right)  \Bigg], 
   \end{split}
    \end{equation}
    where $o(1)$ is independent of $f$. 
\end{itemize}
\end{theorem}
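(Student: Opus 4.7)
The plan is to reduce the computation of the Laplace transform to an asymptotic analysis of a ratio of partition functions, exploiting Lemma \ref{lem:partfunc}. The key observation is that substituting $V \to V - rf$ in the Hamiltonian multiplies the Boltzmann weight by $\exp(N^{2} \beta r \int f \, d\,\mathrm{emp}_{N})$, so that
\begin{equation*}
\mathbb{E}_{\mathbf{P}_{N, \beta}}\bigl[\exp(N^{2} \beta r \, \mathrm{Fluct}_{\bullet}[f])\bigr] = \exp\bigl(-N^{2} \beta r \textstyle\int f \, d\mu^{V}_{\bullet}\bigr) \cdot \frac{Z^{V-rf}_{N, \beta}}{Z^{V}_{N, \beta}}, \quad \bullet \in \{\infty, \theta\}.
\end{equation*}
For the upper bound I would apply the upper inequality of Lemma \ref{lem:partfunc} to the numerator and the lower inequality to the denominator; the roles reverse for the lower bound. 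After taking logarithms, the task reduces to evaluating $\mathcal{E}^{\theta}_V(\mu^{V}_{\theta}) - \mathcal{E}^{\theta}_{V-rf}(\mu^{V-rf}_{\theta})$ (or the corresponding equilibrium expression) up to the error terms supplied by the Lemma.

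The central identity to prove is that, to leading order, $\mathcal{E}^{\theta}_{V-rf}(\mu^{V-rf}_{\theta}) = \mathcal{E}^{\theta}_{V}(\mu^{V}_{\theta}) - r \int f \, d\mu^{V}_{\theta} - (r^{2}/4) \|h_{-1/2}(f)\|^{2}_{L^{2}}$. To derive this, I would expand around $\mu^{V}_{\theta}$ by setting $\mu = \mu^{V}_{\theta} + \delta$ with $\int \delta = 0$. Symmetry of $g$ gives $\mathcal{E}(\mu^{V}_{\theta} + \delta) = \mathcal{E}(\mu^{V}_{\theta}) + 2 \int h(\mu^{V}_{\theta}) \, d\delta + \mathcal{E}(\delta)$, and the entropy admits the exact decomposition $\mathrm{ent}[\mu^{V}_{\theta} + \delta] = \mathrm{ent}[\mu^{V}_{\theta}] + \int \log \mu^{V}_{\theta} \, d\delta + \mathrm{ent}[\mu^{V}_{\theta} + \delta \,|\, \mu^{V}_{\theta}]$. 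The Euler--Lagrange equation $2 h(\mu^{V}_{\theta}) + V + \theta^{-1} \log \mu^{V}_{\theta} = c$ kills every linear-in-$\delta$ contribution, reducing the problem to minimizing $\delta \mapsto \mathcal{E}(\delta) - r \int f \, d\delta$ under $\int \delta = 0$. Passing to Fourier coefficients, this quadratic form is diagonalized by $\widehat{g}$; its minimizer is $\delta^{*} = (r/2) h_{-1}(f)$, attaining value $-(r^{2}/4) \|h_{-1/2}(f)\|^{2}_{L^{2}}$. The upper inequality on the minimum comes from using $\mu^{V}_{\theta} + \delta^{*}$ as a competitor (with the $\theta^{-1}$ relative entropy remainder absorbed into the error), and the matching lower inequality from nonnegativity of the relative entropy. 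For part (a) in the regime $\theta \to \infty$ the same expansion works around $\mu^{V}_{\infty}$, but the Euler--Lagrange identity $2 h(\mu^{V}_{\infty}) + V = c$ holds only on $\Sigma = \mathrm{supp}(\mu^{V}_{\infty})$.

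For the lower bounds of part (b), one cannot refer to the unknown $\mu^{V-rf}_{\infty}$ directly; instead I would insert the test measure $\tilde\mu = \mu^{V}_{\infty} + (r/2) h_{-1}(f)$ (sign-adjusted consistently with the statement) into the variational principle underlying the lower bound of Lemma \ref{lem:partfunc}(b). Hypothesis (i), $\mathrm{supp}(h_{-1}(f)) \subset \Sigma$, guarantees that $\delta^{*}$ lives where the Euler--Lagrange identity is active, so the linear cancellation still occurs; hypothesis (ii), $\mu^{V}_{\infty} - (r/2) h_{-1}(f) \geq 0$, ensures that $\tilde\mu$ is an admissible probability measure in the variational problem.

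The main obstacle is controlling the error terms. Lemma \ref{lem:partfunc} provides an error proportional to $1 + \|\mu^{V-rf}_{\bullet}\|_{W^{\alpha - \gamma}}$, but the statement of Theorem \ref{teo:laplace} expresses the error in terms of $1 + \|\mu^{V}_{\bullet}\|_{W^{\alpha - \lambda}}$ plus an additional term of the form $C N^{q^{*}} \|f\|_{W^{\kappa}} r$. I would bridge the gap via the Fourier bound $\|h_{-1}(f)\|_{W^{\alpha - \gamma}} \leq c^{-1} \sum_{m} |\widehat{f}(m)| |m|^{\alpha - \gamma + \lambda}$, which follows from hypothesis 3 of admissibility ($\widehat{g}(m) \geq c |m|^{-\lambda}$); with $\kappa$ chosen at least $\alpha - \gamma + \lambda$, the $f$-dependent part of the error is absorbed into the linear-in-$r$ remainder, and the rest of the error algebra is routine combination with Lemma \ref{lem:partfunc}.
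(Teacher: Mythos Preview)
Your partition-function-ratio argument is exactly what the paper does for part (b): write the Laplace transform as $\exp(-N^{2}\beta r\int f\,d\mu^{V}_{\infty})\,Z^{V-rf}_{N,\beta}/Z^{V}_{N,\beta}$, observe that hypotheses (i) and (ii) force $\mu^{V-rf}_{\infty}=\mu^{V}_{\infty}+\tfrac{r}{2}h_{-1}(f)$ via the first-order condition, then feed the two bounds of Lemma~\ref{lem:partfunc} and the splitting formula through. So on the lower bound you are aligned with the paper.

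The gap is in your treatment of the \emph{upper} bounds in part (a) for admissible kernels. The paper does \emph{not} use the ratio $Z^{V-rf}/Z^{V}$ there; instead it writes $\mathbb{E}[\exp(N^{2}\beta r\,\mathrm{Fluct})]=\int_{0}^{\infty}\mathbf{P}(\exp(\cdots)\ge x)\,dx$, inserts the concentration bound of Theorem~\ref{teo:conc}(a), and evaluates the resulting Gaussian integral. The reason is precisely the obstacle you flagged: applying Lemma~\ref{lem:partfunc} to the numerator produces an error $C(1+\|\mu^{V-rf}_{\bullet}\|_{W^{\alpha-\gamma}})N^{p^{*}}$, and you need to turn this into $C(1+\|\mu^{V}_{\bullet}\|_{W^{\alpha-\lambda}})N^{q^{*}}+C\|f\|_{W^{\kappa}}rN^{q^{*}}$. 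Your proposed fix implicitly assumes $\mu^{V-rf}_{\bullet}-\mu^{V}_{\bullet}=\tfrac{r}{2}h_{-1}(f)$, but this identity is available only under hypotheses (i)--(ii) in the $\infty$ case, and is never exact in the thermal case because of the $\theta^{-1}\log\mu$ term in the Euler--Lagrange equation. Without those hypotheses you have no a~priori control on $\|\mu^{V-rf}_{\bullet}\|_{W^{\alpha-\gamma}}$, so the error term cannot be reduced to the form stated in the theorem. The paper makes exactly this point in the remark immediately following the proof: the ratio method for the upper bound ``is more complicated, since it is not possible to easily characterize $\mu^{V-rf}_{\infty}$'' in general, and even under (i)--(ii) it yields an error no better than the tail-integration route.

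For the weakly admissible leading-order statements your variational analysis is fine and essentially equivalent to the paper's, which invokes the LDP of \cite{garcia2019large} to reduce to the same minimization of $\mathcal{E}_{V}(\mu)+r\int f\,d(\mu-\mu^{V}_{\infty})-\mathcal{E}_{V}(\mu^{V}_{\infty})$.
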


The proof is found in Section \ref{sec:laplace}.

\begin{remark}
    Using Jensen's inequality, we have that
    \begin{equation}
        \begin{split}
             {\rm ent}\left[\mu^{V}_{\infty}\right] - \log |\Sigma| &\geq {\rm ent}\left[ \frac{1}{|\Sigma|} \mathbf{1}_{\Sigma}\right] - \log |\Sigma|\\
             &=0.
        \end{split}
    \end{equation}
\end{remark}

\begin{remark}
    Theorems \ref{teo:concup}, \ref{teo:conclow}, and \ref{teo:laplace} are stated in terms of $\left\|h_{-\frac{1}{2}}(f)\right\|_{L^{2}}$ since they give the optimal estimate. Identifying the term that makes the upper bounds sharp (to leading order) is part of the goal of Theorems \ref{teo:concup}, \ref{teo:conclow}, and \ref{teo:laplace}. However, the reader may want to relate this to more tractable quantities, and have concentration bounds in terms of more common norms. This can be achieved by a comparison with $\dot{H}^{s}$ semi-norms. In the Coulomb case, $\left\|h_{-\frac{1}{2}}(f)\right\|_{L^{2}}$ is (up to a universal constant) equal to $\left\|\nabla f\right\|_{L^{2}} = \left\| f \right\|_{\dot{H}^{1}} $. In the Riesz case, $\left\|h_{-\frac{1}{2}}(f)\right\|_{L^{2}}$ is (up to a universal constant) equal to $\left\| (- \Delta)^{\frac{s}{2}} f\right\|_{L^{2}}= \left\| f \right\|_{\dot{H}^{\frac{s}{2}}} $. In the admissible case, assumptions 2 and 3 of Definition \ref{def:admissible} imply that $c \left\| f \right\|_{\dot{H}^{\frac{\lambda}{2}}} \leq \left\|h_{-\frac{1}{2}}(f)\right\|_{L^{2}} \leq C  \left\| f \right\|_{\dot{H}^{\frac{\gamma}{2}}} $. 
\end{remark}

Theorem \ref{teo:laplace} may be compared to analogous statements in \cite{serfaty2023gaussian,peilen2024local,garcia2022generalized,berman2019sharp}. In the Coulomb or $1d$ log cases, the bound on the Laplace transform in \cite[Theorem 1.3]{peilen2024local} and \cite[Theorem 1]{serfaty2023gaussian} is a polynomial of higher order in $r$ \footnote{In the notation of \cite{serfaty2023gaussian,peilen2024local}, $r$ is replaced by $s$ or $t$, and the normalization is different} ($3$ and $4$ respectively), but with no constant term. Consequently, the estimate in Theorem \ref{teo:laplace} is better for $r$ tending to $\infty$, but worse as $r$ tends to $0$. Depending on $\kappa$ and $\lambda$, Theorem \ref{teo:laplace} may also require less regularity on the test function. 

We expect that the concentration bounds and estimates on the Laplace transform of fluctuations around the thermal equilibrium measure are more precise than the ones around the equilibrium measure in all temperature regimes since they don't contain the additional ${\rm ent}[\mu^{V}_{\infty}] - \log \left|\Sigma\right| + o_{N}(1)$ term. This has been proved for Riesz interactions and Coulomb interactions \cite{padilla2023concentration,garcia2022generalized}. Indeed, in the high-temperature regime, they are significantly more accurate even for general interactions. Proving this outside the high-temperature regime would require proving uniform bounds on the $W^{\alpha}$ norm of $\mu_{\theta}^{V}$ in terms of the $W^{\alpha}$ norm of $\mu_{\infty}^{V}$ uniformly as $\theta \to \infty$. In the Coulomb and Riesz case, an analogous statement is known to hold for the $L^{\infty}$ norm \cite{armstrong2022thermal, garcia2022generalized}.

\begin{remark}
    In this paper, we have deduced bounds on the Laplace transform of fluctuations from concentration bounds. It is also possible to deduce concentration bounds from bounds on the Laplace transform of fluctuations by applying Chebyshev's inequality. 
\end{remark}

\section{Further work}
\label{sect:further}

\begin{itemize}

\item[1.] A problem that has been widely studied in this field \cite{serfaty2023gaussian,leble2018fluctuations,bekerman2018clt,boursier2021optimal,peilen2024local,hardy2021clt,bauerschmidt2019two,lambert2019quantitative} is to obtain a Central Limit Theorem (CLT) for fluctuations, either ${\rm Fluct}_{\infty}[f] $ or ${\rm Fluct}_{\theta}[f] $. A remarkable fact is that the fluctuations are of magnitude $\frac{1}{N}$, i.e. $N {\rm Fluct}_{\infty}[f]$ converges to a nontrivial Gaussian random variable in some temperature regimes and dimensions. Furthermore, it is often possible to identify the mean and variance of the limiting Gaussian random variable. Since convergence to a Gaussian random variable is implied by the convergence of the log-Laplace transform to a quadratic function, this problem is closely linked to obtaining precise asymptotes of the Laplace transform of fluctuations. The asymptotes we obtain in this paper are not precise enough to obtain a CLT. However, the problem still makes sense in the context of general interactions, dimensions, and temperature regimes: are fluctuations of order $N$? What are the means and variance of the limiting Gaussian, as a function of $f$? 

\item[2.] The LDPs proved in \cite{garcia2019large,chafai2014first} imply that, a.s. under the Gibbs measure, the empirical measure converges to the equilibrium measure. This is equivalent to the statement that a.s. under the Gibbs measure, the bounded Lipschitz norm of the difference between the empirical measure and the equilibrium measure tends to $0$. A more precise and quantitative version of this statement in the case of Coulomb and Riesz interactions can be found in \cite{chafai2018concentration, padilla2023concentration, garcia2019concentration, garcia2022generalized}, which obtained upper bounds for the probability that the bounded Lipschitz norm of the difference is larger than a certain threshold. Obtaining such bounds in the context of general interactions remains an open problem. 

\item[3.]  Theorems \ref{teo:concup}, \ref{teo:conclow}, and \ref{teo:laplace} identify the leading order term in a certain open interval in $r$ if $f$ satisfies that $\mathrm{supp} h_{-1}(f) \subset \Sigma$. If $g$ is the Coulomb kernel, this is equivalent to the condition $\Delta f \subset \Sigma$, which itself is equivalent to the condition that $\mathrm{supp}(f) \subset \Sigma$. This is a common assumption (see \cite{bauerschmidt2019two, serfaty2023gaussian}). For arbitrary $r$ and $f$, the leading-order term can be obtained using the results in \cite{chafai2014first, garcia2019large}, but obtaining precise error estimates remains an open problem.    

\item[4.] In the Coulomb and Riesz cases, the thermal equilibrium measure provides a better approximation to the system (the empirical measure) than the equilibrium measure (see \cite{padilla2023concentration, garcia2022generalized}). In the Coulomb case, it can also be used to obtain better local laws \cite{armstrong2021local} and CLTs under more general hypotheses \cite{serfaty2023gaussian}. Such results rely on an understanding of the behavior of the thermal equilibrium measure $\mu_{\theta}$ for large $\theta$, and also on fine features of the convergence to $\mu_{\infty}$, see \cite{armstrong2022thermal}, and analogous results for Riesz interactions in \cite{garcia2022generalized}. Such results are currently not available for general interactions. These estimates would allow us to obtain new results for interacting particle systems and would be of interest in their own right.     

\end{itemize}

\section{Literature comparison}

This paper was originally inspired by the series of articles \cite{maida2014free,chafai2018concentration,garcia2019concentration,padilla2023concentration,garcia2022generalized}. \cite{chafai2018concentration}, building upon the work of \cite{maida2014free}, proves concentration of measure around the equilibrium measure for Coulomb gases in all dimensions, in the Wasserstein and bounded-Lipshitz metrics. In particular, this proves concentration of measure for Ginibre ensembles (as well other random matrix models). It crucially relies on new inequalities
between metrics on probability measures, including inequalities comparing the Coulomb energy with metrics coming from Optial Transportation. Building upon this work, \cite{garcia2019concentration} proves concentration of measure around the equilibrium measure in the Wasserstein norm for Coulomb gases in compact Riemannian manifolds. Also building upon \cite{chafai2018concentration}, \cite{padilla2023concentration} proves concentration of measure around the \emph{thermal} equilibrium measure in the Wasserstein norm, and shows that the concentration inequality is optimal in a certain geometric sense. 
\cite{garcia2022generalized} extends these ideas to Riesz, and a class of Riesz-type interactions introduced in \cite{nguyen2022mean, rosenzweig2023global}. In this setting, the ``transport" inequalities do not compare the electric energy to the Wasserstein or bounded-Lipschitz norms, but with dual-Holder norms. As in \cite{chafai2018concentration, garcia2019concentration, padilla2023concentration}, concentration of measure is proved around the equilibrium and thermal equilibrium measures. These inequalities are proved in the dual-Holder norms, but imply concentration of measure also in the Wasserstein norm. However, unlike the Coulomb case, these rates are not geometrically optimal. Leveraging on these results, \cite{garcia2022generalized} concludes by proving Moser-Trudinger-type inequalities, which may be interpreted as bounds on the Laplace transform of fluctuations. This paper expands the program in \cite{garcia2022generalized} to general interactions, with the exception of transport inequalities, which are not available for general interactions. The concentration inequalities proved here, unlike the ones in \cite{chafai2018concentration, garcia2019concentration, padilla2023concentration, garcia2022generalized} are not stated in terms of the norm of a difference (either between the empirical measure, and the equilibrium, or thermal equilibrium measures), but stated for a fixed test function. This approach is better suited to general interactions. Furthermore, we improve on the Moser-Trudinger-type inequalities by showing that they are saturated in some cases. \\

This paper may be compared to \cite{chafai2014first,garcia2019large} since they both deal with macroscopic observables governed by the Gibbs measure of a general interacting particle system. \cite{chafai2014first,garcia2019large} Derive an LDP for the empirical measure. The natural and general approach in \cite{garcia2019large} allows for extensions to many-body interactions and Riemannian manifolds, among other extensions. Obtaining an LDP is a very closely related question, and the results in \cite{garcia2019large} are the starting point in some of our proofs. However, this approach yields only the leading order terms in an asymptotic expansion. It is not possible to obtain a rate of convergence by means of it. For this, we must introduce a new approach.\\  

Another paper that deals with the Gibbs measure of a general interacting particle system, with the number of particles tending to infinity is \cite{lambert2021poisson}. The main result is that at high temperature $\left( \beta = \frac{1}{N} \right)$, the empirical field, defined as $\sum_{i=1}^{N} \delta_{N^{\frac{1}{d}} x_{i}}$, converges to a Poisson point process. The approach is to analyze the correlation function of the empirical field. \\

As mentioned in Section \ref{sect:further}, an important problem in this field is to obtain a CLT for fluctuations, and this reduces to finding precise asymptotes of the Laplace transform, see \cite{serfaty2023gaussian,leble2018fluctuations,bekerman2018clt,boursier2021optimal,peilen2024local,hardy2021clt,bauerschmidt2019two,johansson1998fluctuations,lambert2019quantitative}. The techniques used for this end may vary considerably: \cite{serfaty2023gaussian,leble2018fluctuations,bekerman2018clt,peilen2024local} uses the transport approach introduced in \cite{johansson1998fluctuations}. The approach in \cite{lambert2019quantitative,hardy2021clt,boursier2021optimal} is inspired by Stein's method. The approach in \cite{bauerschmidt2019two} is to analyze a loop equation for the Coulomb gas. \\     

Apart from being applied in CLTs, estimates for the partition function are an interesting problem in their own right. In the Riesz and Coulomb setting, it is possible to obtain very precise estimates by means of the \emph{renormalized energy} and \emph{screening procedure}. This approach was used in \cite{leble2017large, armstrong2021local,sandier20152d,rougerie2016higher,petrache2017next}.\\

\section{Preliminaries}
\label{sec:prelims}

In this section, we present some foundational results and definitions. These will be necessary to prove the main statements of the paper.

\begin{definition}
We begin by defining the bilinear form which induces the norm $\mathcal{E}$. The bilinear form $\mathcal{G}$ is defined for measures $\mu, \nu$ as 
\begin{equation}
    \mathcal{G} (\mu, \nu) = \int_{\mathbf{T}^{d} \times \mathbf{T}^{d}} g(x-y) \mathrm d  \mu \otimes \nu(x,y).
\end{equation}

We now present a convenient way of writing the energy of empirical measures. We define $\mathcal{E}^{\neq}$ for a measure $\mu$ as 
\begin{equation}
    \mathcal{E}^{\neq} (\mu) = \iint_{\mathbf{T}^{d} \times \mathbf{T}^{d} \setminus \Delta} g(x-y) \mathrm d \mu\otimes  \mu (x,y),
\end{equation}
where 
\begin{equation}
    \Delta = \{ (x,x) \in \mathbf{T}^{d} \times \mathbf{T}^{d} \}.
\end{equation}
We also introduce notation for the energy of an empirical measure with a background. Given a measure $\mu$, and $X_{N} \in \mathbf{T}^{d \times N}$, we define ${\rm F}_{N}(X_{N}, \mu)$ as 
\begin{equation}
    {\rm F}_{N}(X_{N}, \mu) = \mathcal{E}^{\neq} (\mu - {\rm emp}_{N}).
\end{equation}
The reason for excluding the diagonal in the integral is to avoid the singularity at the origin in $g$. This way, these quantities can be finite on empirical measures, and thus accurately reflect their electric energy. 

\end{definition}

Next, we move on to the existence and uniqueness of the (thermal) equilibrium measure, and to a characterization. 

\begin{lemma}
Assume $V$ and $g$ are admissible. Then the functional $\mathcal{E}_{V}$ has a unique minimizer in the set of probability measures, which has compact support $\Sigma$ and satisfies the First Order Condition
\begin{equation}\label{ELeq}
    \begin{split}
        2h({\mu^{V}_{\infty}}) + V - c_{\infty} &\geq 0 \text{ in } \mathbf{T}^{d}\\
        2h({\mu^{V}_{\infty}}) + V - c_{\infty} &= 0 \text{ in } \Sigma, 
    \end{split}
\end{equation}
for some constant $c_{\infty} \in \mathbf{R}.$
\end{lemma}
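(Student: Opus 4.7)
The plan is classical: the direct method of the calculus of variations for existence, strict convexity for uniqueness, and a first-variation argument for the Euler--Lagrange condition. Compactness of $\Sigma$ is automatic since the torus is compact.

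For \textbf{existence}, the infimum of $\mathcal{E}_V$ is finite (test against the uniform measure: $\mathcal{E}(\mathrm{Unif})$ and $\int V\,\mathrm d(\mathrm{Unif})$ are finite by $g, V \in L^1(\mathbf{T}^d)$). Take a minimizing sequence; by weak-$*$ compactness of $\mathcal{P}(\mathbf{T}^d)$, extract a subsequence converging to a candidate $\mu^V_\infty$. The $V$-part of $\mathcal{E}_V$ is weak-$*$ lsc since $V$ is lsc and bounded below. For the interaction part, I would use Plancherel:
\begin{equation}
\mathcal{E}(\mu) = \sum_{m \in \mathbf{Z}^d} \widehat{g}(m)\,|\widehat{\mu}(m)|^2 \in [0, \infty],
\end{equation}
interpreted as the supremum over $K$ of its partial sums in $|m| \leq K$. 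Since $\widehat{g}(m) > 0$ by assumption 3 of Definition \ref{def:admissible}, these partial sums increase in $K$, and each is weak-$*$ continuous because $\widehat{\mu}(m)$ depends continuously on $\mu$ in weak-$*$. Hence $\mathcal{E}$ is lsc as a supremum of continuous functionals, and $\mu^V_\infty$ is a minimizer.

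For \textbf{uniqueness}, positive definiteness of $g$ yields the polarization identity $\mathcal{E}\bigl(\tfrac{\mu+\nu}{2}\bigr) = \tfrac{1}{2}\mathcal{E}(\mu) + \tfrac{1}{2}\mathcal{E}(\nu) - \tfrac{1}{4}\mathcal{E}(\mu - \nu)$, so for $\mu \neq \nu$ with finite energy, $\mathcal{E}\bigl(\tfrac{\mu+\nu}{2}\bigr) < \tfrac{1}{2}(\mathcal{E}(\mu) + \mathcal{E}(\nu))$. Since the $V$-term is linear, $\mathcal{E}_V$ is strictly convex on probability measures of finite energy, giving uniqueness.

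For the \textbf{first-order condition}, fix a probability measure $\nu$ with $\mathcal{E}(\nu) < \infty$ and consider $\mu_t := (1-t)\mu^V_\infty + t\nu$. Expanding $\mathcal{E}_V(\mu_t)$ as a quadratic polynomial in $t$ and requiring $\tfrac{\mathrm d}{\mathrm d t}\mathcal{E}_V(\mu_t)\big|_{t=0^+} \geq 0$ yields
\begin{equation}
\int_{\mathbf{T}^d} \bigl(2h(\mu^V_\infty) + V\bigr)\,\mathrm d\nu \;\geq\; \int_{\mathbf{T}^d} \bigl(2h(\mu^V_\infty) + V\bigr)\,\mathrm d\mu^V_\infty =: c_\infty.
\end{equation}
Testing with smooth probability densities localized near an arbitrary $x_0 \in \mathbf{T}^d$ (which have finite energy since $g \in L^1$), combined with lower semicontinuity of $2h(\mu^V_\infty) + V$, promotes this to the pointwise bound $2h(\mu^V_\infty) + V \geq c_\infty$ on $\mathbf{T}^d$. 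By the very definition of $c_\infty$, the nonnegative integrand $2h(\mu^V_\infty) + V - c_\infty$ has vanishing integral against $\mu^V_\infty$, hence must vanish $\mu^V_\infty$-a.e., hence on $\Sigma = \mathrm{supp}\,\mu^V_\infty$. The main technical obstacle is the lower semicontinuity of $\mathcal{E}$ for singular admissible kernels, which the Fourier truncation above handles cleanly on the torus; a minor subtlety is promoting the integrated variational inequality to a pointwise statement, which relies on the lsc of $h(\mu^V_\infty) = g \ast \mu^V_\infty$.
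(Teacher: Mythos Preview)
Your proposal is correct and follows the classical argument (direct method, strict convexity from positive definiteness, first variation) that the paper itself defers to: the paper's ``proof'' is simply a citation to \cite[Chapter 2]{serfaty2015coulomb} with the remark that the Coulomb case extends to admissible kernels. Your sketch is in fact more detailed than what the paper provides, and your Fourier-truncation argument for lower semicontinuity of $\mathcal{E}$ is a clean way to exploit the admissibility hypothesis $\widehat{g}(m)>0$ on the torus; the one technical point you flag (lsc of $h(\mu^V_\infty)$, needed to upgrade the $\mu^V_\infty$-a.e.\ equality to equality on all of $\Sigma$) is indeed the place where one has to use that $g$ is continuous away from $0$ (from assumption~5) and blows up there, but this is routine.
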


\begin{proof}
    See \cite[Chapter 2]{serfaty2015coulomb} for the Coulomb case, which extends to the general case. 
\end{proof}

\begin{lemma}
The functional $\mathcal{E}_{V}^{\theta}$ has a unique minimizer in the set of probability measures, which is everywhere positive, and  satisfies the First Order Condition
\begin{equation}
\label{eq:ELeqtherm}
    2h({\mu^{V}_{\theta}}) + V + \frac{1}{\theta} \log (\mu^{V}_{\theta}) = c_{\theta},
\end{equation}
for some constant $c_\theta$.
\end{lemma}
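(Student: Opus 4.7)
The plan is the standard direct method of the calculus of variations on the weak-$*$ compact space $\mathcal{P}(\mathbf{T}^d)$, with strict convexity handling uniqueness and a first-variation argument (combined with the singular slope of $s\mapsto s\log s$ at $0$) handling both everywhere positivity and the Euler--Lagrange equation.

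\textbf{Existence and uniqueness.} The functional $\mathcal{E}_V^\theta$ is proper -- the uniform measure $|\mathbf{T}^d|^{-1}$ has finite interaction energy since $g\in L^1$, finite $\int V$, and finite entropy -- and bounded below: $\mathcal{E}(\mu)\geq 0$ by positive definiteness, $\int V\,\mathrm d\mu \geq \essinf V > -\infty$ because $V$ is lsc on the compact torus, and ${\rm ent}[\mu]\geq -\log|\mathbf{T}^d|$ by Jensen. Along a weak-$*$ convergent minimizing sequence each piece is lsc: $\mathcal{E}$ via its Fourier representation $\mathcal{E}(\mu)=\sum_m \widehat g(m)|\widehat\mu(m)|^2$ with $\widehat g(m)\geq 0$ and Fatou; $\mu\mapsto\int V\,\mathrm d\mu$ from the lsc of $V$; entropy by its convex-duality representation. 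A minimizer thus exists. The identity $\mathcal{E}(t\mu+(1-t)\nu) - t\mathcal{E}(\mu) - (1-t)\mathcal{E}(\nu) = -t(1-t)\mathcal{E}(\mu-\nu)$ combined with positive definiteness gives strict convexity of $\mathcal{E}$, and entropy is strictly convex in the density; uniqueness follows.

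\textbf{Everywhere positivity.} Suppose for contradiction that $U:=\{\mu_\theta^V=0\}$ has $|U|>0$. Consider the admissible perturbation
\begin{equation}
\mu_\epsilon := (1-\epsilon)\mu_\theta^V + \epsilon |U|^{-1}\mathbf{1}_U \in \mathcal{P}(\mathbf{T}^d).
\end{equation}
A direct calculation gives ${\rm ent}[\mu_\epsilon] - {\rm ent}[\mu_\theta^V] = \epsilon\log\epsilon + O(\epsilon)$, whereas the energy change is $O(\epsilon)$: the cross term satisfies $\mathcal{G}(\mu_\theta^V,|U|^{-1}\mathbf{1}_U) \leq |U|^{-1}\|g\ast\mu_\theta^V\|_{L^1}$ by Young, the quadratic self-term is $O(\epsilon^2)$, and $\int V\,\mathrm d\mu_\epsilon$ is plainly $O(\epsilon)$. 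Hence $\mathcal{E}_V^\theta(\mu_\epsilon) - \mathcal{E}_V^\theta(\mu_\theta^V) = \theta^{-1}\epsilon\log\epsilon + O(\epsilon) < 0$ for small $\epsilon>0$, contradicting minimality.

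\textbf{First-order condition and main obstacle.} Positivity makes $\mu_\theta^V+\epsilon\phi$ an admissible two-sided perturbation for any $\phi\in L^\infty(\mathbf{T}^d)$ with $\int\phi=0$ and $|\epsilon|$ small. Differentiating at $\epsilon=0$, and noting that the additive $1/\theta$ arising from $\partial_\rho(\rho\log\rho)=\log\rho+1$ integrates against zero, yields
\begin{equation}
0 = \int \left(2h(\mu_\theta^V) + V + \frac{1}{\theta}\log\mu_\theta^V\right)\phi\,\mathrm d x.
\end{equation}
Varying $\phi$ over mean-zero $L^\infty$ functions forces the bracket to be constant a.e., which is exactly \eqref{eq:ELeqtherm}. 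The delicate step is the positivity argument: the singular $\epsilon\log\epsilon$ entropy gain must beat the energy variation when $g$ is singular, and this works only because the mixing density $|U|^{-1}\mathbf{1}_U$ is bounded, so Young's inequality (using $g\in L^1$) delivers a quantitative $O(\epsilon)$ bound on the cross term. Justifying the differentiation in the EL step is a secondary but routine point, handled by dominated convergence once one uses that $\log\mu_\theta^V\in L^1_{\mathrm{loc}}$ (a consequence of finite entropy).
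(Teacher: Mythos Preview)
The paper does not prove this lemma at all; it simply cites \cite[Proposition~5.4]{padilla2022large}. Your proposal therefore supplies genuinely new content, and the overall strategy --- direct method plus strict convexity, an $\epsilon\log\epsilon$ competitor for positivity, then first variation --- is the standard one and is essentially sound.

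There are, however, two small technical slips in the Euler--Lagrange step that you should tighten. First, a.e.\ positivity of $\mu_\theta^V$ does \emph{not} by itself make $\mu_\theta^V+\epsilon\phi$ a probability measure for every mean-zero $\phi\in L^\infty$ and small $|\epsilon|$: if $\essinf\mu_\theta^V=0$ you can always violate nonnegativity on the set where $\phi<0$. The usual fix is either to restrict $\phi$ to be supported in $\{\mu_\theta^V>\delta\}$ and then let $\delta\downarrow 0$, or to use convex perturbations $(1-t)\mu_\theta^V+t\nu$ with $\nu\in\mathcal{P}(\mathbf{T}^d)$, which are automatically admissible. Second, your justification ``$\log\mu_\theta^V\in L^1_{\loc}$ is a consequence of finite entropy'' is false in general: finiteness of $\int\mu\log\mu$ controls $\int_{\{\mu\geq1\}}\log\mu$ but gives no bound on $\int_{\{\mu<1\}}|\log\mu|$ (think of $\mu\sim e^{-1/|x|}$ near a point). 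Again the convex-perturbation route avoids this: by convexity of $s\mapsto s\log s$ the one-sided derivative of the entropy along $(1-t)\mu_\theta^V+t\nu$ exists (possibly $-\infty$), and combined with minimality one gets the variational inequality $\int(2h(\mu_\theta^V)+V+\theta^{-1}\log\mu_\theta^V)\,d(\nu-\mu_\theta^V)\geq 0$ for all admissible $\nu$, from which the constancy follows. Neither point affects your existence, uniqueness, or positivity arguments, which are clean.
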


\begin{proof}
    See \cite[Proposition 5.4]{padilla2022large}.
\end{proof}

We move on to splitting formulae, which are a way of expanding the (thermal) energy around its value at the (thermal) equilibrium measure. 

\begin{proposition}[Splitting formula]
\label{Prop:splitting}
For every $X_N = (x_1,\dots,x_N) \in \mathbf T^{d \times N}$ 
such that $x_i \neq x_j$ whenever $i \neq j$, we have that the Hamiltonian $\mathcal{H}_{N}$ can be split into 
\begin{equation}
\label{eq:splittingform}
    \mathcal{H}_{N}(X_{N}) = N^{2} \left( \mathcal{E}_{V}(\mu^{V}_{\infty}) + {\rm F}_{N}(X_{N}, \mu^{V}_{\infty}) + \int_{\mathbf{T}^{d}} \zeta_{\infty}\, \mathrm d\, {\rm emp}_{N} \right),
\end{equation}
where 
\begin{equation}
    \zeta_{\infty} = V +c_{\infty}- 2 h({\mu^{V}_{\infty}}) .
\end{equation}
\end{proposition}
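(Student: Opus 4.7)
The strategy is to unfold ${\rm F}_N(X_N, \mu^V_\infty) = \mathcal{E}^{\neq}(\mu^V_\infty - {\rm emp}_N)$ by bilinearity and then invoke the first-order condition for $\mu^V_\infty$ to collapse the resulting constants. Concretely, I would first write
\begin{equation*}
\mathcal{E}^{\neq}(\mu^V_\infty - {\rm emp}_N) = \mathcal{E}^{\neq}(\mu^V_\infty) - 2\iint_{\neq} g(x-y)\,\mathrm d\mu^V_\infty(x)\,\mathrm d{\rm emp}_N(y) + \mathcal{E}^{\neq}({\rm emp}_N),
\end{equation*}
and note that, since $\mu^V_\infty$ is non-atomic (any atom would make $\mathcal{E}(\mu^V_\infty)$ infinite, contradicting minimality), the first term equals $\mathcal{E}(\mu^V_\infty)$ and the cross integral reduces to $\tfrac{2}{N}\sum_i h(\mu^V_\infty)(x_i)$. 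Recognising $\mathcal{E}^{\neq}({\rm emp}_N) = \tfrac{1}{N^2}\sum_{i\neq j} g(x_i - x_j)$ and plugging back into the definition of $\mathcal{H}_N$ yields
\begin{equation*}
\mathcal{H}_N(X_N) = N^2 {\rm F}_N(X_N, \mu^V_\infty) - N^2 \mathcal{E}(\mu^V_\infty) + N^2 \int_{\mathbf{T}^d}\bigl[2h(\mu^V_\infty) + V\bigr]\,\mathrm d\,{\rm emp}_N.
\end{equation*}

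To finish, I would add and subtract $N^2\int V\,\mathrm d\mu^V_\infty$ to replace $-N^2\mathcal{E}(\mu^V_\infty)$ by $N^2\mathcal{E}_V(\mu^V_\infty) - N^2\int[2h(\mu^V_\infty)+V]\,\mathrm d\mu^V_\infty$, and then use the FOC $2h(\mu^V_\infty)+V = c_\infty$ on $\Sigma = {\rm supp}(\mu^V_\infty)$ to evaluate the latter integral against $\mu^V_\infty$ as $c_\infty$. What remains is exactly
\begin{equation*}
N^2\mathcal{E}_V(\mu^V_\infty) + N^2 {\rm F}_N(X_N, \mu^V_\infty) + N^2 \int[2h(\mu^V_\infty)+V-c_\infty]\,\mathrm d\,{\rm emp}_N,
\end{equation*}
which is the splitting formula with $\zeta_\infty = 2h(\mu^V_\infty)+V-c_\infty$, i.e.\ the effective potential displayed in the proposition (up to the paper's sign convention).

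\textbf{Expected obstacle.} The identity is purely algebraic, so no real analytic difficulty arises; the only point deserving care is the justification that $\mu^V_\infty$ gives zero mass to the finite set $\{x_1,\dots,x_N\}$, used when discarding the diagonal in the cross term. This is immediate from finiteness of the equilibrium energy together with the singularity of $g$ at the origin, but it is worth flagging, because the same expansion becomes ill-posed if one naively writes the cross integral on the full product space.
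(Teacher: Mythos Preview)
Your argument is correct and is precisely the standard bilinear expansion that the paper has in mind: the paper does not actually prove the splitting formula but simply refers to \cite[Section 3]{sandier20152d} for the Coulomb case and asserts that the computation extends verbatim. Your write-up \emph{is} that computation, including the use of the first-order condition on $\Sigma$ to convert $\int(2h(\mu^V_\infty)+V)\,\mathrm d\mu^V_\infty$ into the constant $c_\infty$; you also correctly flag the sign typo in the displayed $\zeta_\infty$ (the paper's later uses, e.g.\ $\rho_N=\exp(-\theta\zeta_\infty)$ and $\zeta_\infty\geq 0$, confirm that $\zeta_\infty=2h(\mu^V_\infty)+V-c_\infty$ is intended).
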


Note that $\zeta_{\infty}$ depends on the potential $V$, but this dependence is omitted for ease of notation. 

\begin{proof}
See \cite[Section 3]{sandier20152d} for the Coulomb case, which extends to the general case.
\end{proof}

\begin{proposition}[Thermal splitting formula]
We introduce the notation
\begin{equation}
    \zeta_{\theta} = - \frac{1}{\theta} \log\left(\mu^{V}_{\theta}\right).
\end{equation}
For every $X_N = (x_1,\dots,x_N) \in \mathbf T^{d \times N}$ 
such that $x_i \neq x_j$ whenever $i \neq j$ and every $\theta > 0$, we have that the Hamiltonian $\mathcal{H}_{N}$ can be split into  
\begin{equation}\label{eq:thermspltfrm}
    \mathcal{H}_{N} (X_{N}) = N^{2} \left( \mathcal{E}_{V}^{\theta} (\mu^{V}_{\theta}) +  {\rm F}_{N}(X_{N}, \mu^{V}_{\theta})+ \int_{\mathbf{T}^{d}} \zeta_{\theta}\, \mathrm d\, {\rm emp}_{N} \right).
\end{equation}
\end{proposition}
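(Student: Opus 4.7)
The plan mirrors the proof of the (zero-temperature) splitting formula in Proposition \ref{Prop:splitting}, replacing the Euler-Lagrange equation \eqref{ELeq} with its thermal counterpart \eqref{eq:ELeqtherm}. Throughout, write $\mu = \mu^{V}_{\theta}$ for brevity. The first step is the algebraic expansion of $\mathrm F_N(X_N,\mu) = \mathcal E^{\neq}(\mu - \mathrm{emp}_N)$. Expanding the bilinear form, noting that $\mu$ is absolutely continuous so the diagonal has zero $\mu\otimes\mu$-measure (and likewise for $\mu\otimes\mathrm{emp}_N$, since $\mu$ gives no mass to points), one obtains
\begin{equation}
\mathrm F_N(X_N,\mu) = \mathcal E(\mu) - 2\int_{\mathbf T^d} h(\mu)\,\mathrm d\,\mathrm{emp}_N + \frac{1}{N^2}\sum_{i\neq j} g(x_i - x_j).
\end{equation}
Solving for the pair sum and plugging into \eqref{eq:hamiltonian}, and using $N\sum_i V(x_i) = N^2 \int V \,\mathrm d\,\mathrm{emp}_N$, gives
\begin{equation}
\mathcal H_N(X_N) = N^2\left( \mathrm F_N(X_N,\mu) - \mathcal E(\mu) + \int_{\mathbf T^d}\bigl(2h(\mu) + V\bigr)\,\mathrm d\,\mathrm{emp}_N \right).
\end{equation}

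The second step is to invoke the thermal Euler-Lagrange equation \eqref{eq:ELeqtherm}, which reads $2h(\mu) + V = c_\theta - \tfrac{1}{\theta}\log\mu = c_\theta + \zeta_\theta$ as a pointwise identity on $\mathbf T^d$ (recall $\mu > 0$ everywhere). Substituting,
\begin{equation}
\mathcal H_N(X_N) = N^2\left( \mathrm F_N(X_N,\mu) - \mathcal E(\mu) + c_\theta + \int_{\mathbf T^d}\zeta_\theta\,\mathrm d\,\mathrm{emp}_N \right).
\end{equation}

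The final step is to identify the constant term. Integrating the Euler-Lagrange identity against $\mu$ (which is a probability measure) yields
\begin{equation}
c_\theta = 2\mathcal E(\mu) + \int_{\mathbf T^d} V\,\mathrm d\mu + \frac{1}{\theta}\int_{\mathbf T^d}\log\mu\,\mathrm d\mu = \mathcal E(\mu) + \mathcal E_V^\theta(\mu),
\end{equation}
so that $-\mathcal E(\mu) + c_\theta = \mathcal E_V^\theta(\mu)$, which gives \eqref{eq:thermspltfrm}.

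There is no real obstacle here: the only points needing a modicum of care are the justification that the diagonal contributes nothing to $\iint g\,\mathrm d\mu\otimes\mu$ and to $\iint g\,\mathrm d\mu\otimes \mathrm{emp}_N$ (immediate since $\mu$ is absolutely continuous and everywhere positive, hence assigns no mass to any single point), and the fact that $\int \log\mu\,\mathrm d\mu = \mathrm{ent}[\mu]$ is finite, which follows from the same positivity together with compactness of $\mathbf T^d$ and the bound $2h(\mu)+V \in L^\infty$ implicit in \eqref{eq:ELeqtherm}. The rest is direct bookkeeping, entirely parallel to the proof of Proposition \ref{Prop:splitting} with the thermal first-order condition replacing \eqref{ELeq}.
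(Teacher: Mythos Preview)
Your argument is correct. The paper itself does not prove this proposition in-line; it simply cites \cite[Proposition 5.5]{padilla2022large}. Your proof is the standard derivation one expects to find there: expand $\mathrm F_N(X_N,\mu^{V}_{\theta})$ bilinearly, use absolute continuity of $\mu^{V}_{\theta}$ to drop the diagonal in the $\mu^{V}_{\theta}$-terms, substitute the thermal first-order condition \eqref{eq:ELeqtherm}, and identify the constant by integrating \eqref{eq:ELeqtherm} against $\mu^{V}_{\theta}$. The bookkeeping is right and the justifications you flag (no diagonal mass, finiteness of the entropy) are the only points requiring comment.
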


\begin{proof}
    See \cite[Propostion 5.5]{padilla2022large}.
\end{proof}

As an application of the splitting formulae, we may factor out the leading-order components of the partition function, and use this to more conveniently rewrite the Gibbs measure. 

\begin{corollary}
\label{cor:Gibbs}
The Gibbs measure may be rewritten as 
\begin{equation}
   \mathrm d \mathbf{P}_{N, \beta} (X_{N}) = \frac{1}{K_{N, \beta}^{\infty}} \exp \left( - N^{2} \beta \left[ {\rm F}_{N}(X_{N}, \mu^{V}_{\infty}) \right]  \right) \Pi_{i=1}^{N} \rho_{N}(x_{i}) \, \mathrm d X_{N},
\end{equation}
where
\begin{equation}
\label{eq:rho}
    \rho_{N}(x) := \exp \left( - \theta  \zeta_{\infty}(x) \right), 
\end{equation}
$\theta := N \beta$, and 
\begin{equation}
    K_{N, \beta}^{\infty} = \frac{ Z^{V}_{N, \beta} }{\exp \left( N^{2} \beta \mathcal{E}_{V}(\mu^{V}_{\infty}) \right)}.
\end{equation}

Alternatively, the Gibbs measure may be rewritten as 
\begin{equation}
   \mathrm d \mathbf{P}_{N, \beta} (X_{N}) = \frac{1}{K_{N, \beta}^{\theta}} \exp \left( - N^{2} \beta \left[ {\rm F}_{N}(X_{N}, \mu^{V}_{\theta}) \right]  \right) \Pi_{i=1}^{N} \mu_{\theta}^{V}(x_{i}) \, \mathrm d X_{N},
\end{equation}
where
\begin{equation}
    K_{N, \beta}^{\theta} = \frac{ Z^{V}_{N, \beta} }{\exp \left( N^{2} \beta \mathcal{E}_{V}^{\theta}(\mu^{V}_{\theta}) \right)}.
\end{equation}    
\end{corollary}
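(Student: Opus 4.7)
The plan is to prove both identities by direct substitution of the splitting formulae into the definition of the Gibbs measure and then collecting the $N$-independent prefactors into the normalizing constants $K_{N,\beta}^{\infty}$ and $K_{N,\beta}^{\theta}$. The argument is essentially an algebraic manipulation once one observes that the single-particle ``potential'' term in each splitting formula factorizes over the $N$ coordinates.

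First I would treat the statement involving $\mu^{V}_{\infty}$. Starting from
\begin{equation*}
    \mathrm d \mathbf{P}_{N,\beta}(X_N) = \frac{1}{Z^{V}_{N,\beta}} \exp\left(-\beta \mathcal{H}_N(X_N)\right) \mathrm d X_N,
\end{equation*}
I substitute the splitting formula \eqref{eq:splittingform} from Proposition \ref{Prop:splitting} and use that $\int_{\mathbf{T}^{d}} \zeta_{\infty}\, \mathrm d\, {\rm emp}_{N} = \frac{1}{N} \sum_{i=1}^{N} \zeta_{\infty}(x_i)$. With $\theta = N\beta$, this yields
\begin{equation*}
    \exp(-\beta \mathcal{H}_N) = \exp\left(-N^{2}\beta\, \mathcal{E}_V(\mu^{V}_{\infty})\right) \exp\left(-N^{2}\beta\, {\rm F}_{N}(X_N, \mu^{V}_{\infty})\right) \prod_{i=1}^{N} \exp(-\theta\, \zeta_{\infty}(x_i)).
\end{equation*}
Recognizing the last product as $\prod_{i} \rho_N(x_i)$ by the definition \eqref{eq:rho}, and absorbing the $\exp(-N^{2}\beta \mathcal{E}_V(\mu^{V}_{\infty}))$ factor into the normalizing constant via $K_{N,\beta}^{\infty} = Z^{V}_{N,\beta}/\exp(N^{2}\beta \mathcal{E}_V(\mu^{V}_{\infty}))$ gives the first identity.

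Next I would repeat the same calculation with the thermal splitting formula \eqref{eq:thermspltfrm}, obtaining
\begin{equation*}
    \exp(-\beta \mathcal{H}_N) = \exp\left(-N^{2}\beta\, \mathcal{E}_V^{\theta}(\mu^{V}_{\theta})\right) \exp\left(-N^{2}\beta\, {\rm F}_{N}(X_N, \mu^{V}_{\theta})\right) \prod_{i=1}^{N} \exp(-\theta\, \zeta_{\theta}(x_i)).
\end{equation*}
The key simplification in this case is that by the definition $\zeta_{\theta} = -\frac{1}{\theta} \log(\mu^{V}_{\theta})$, one has $\exp(-\theta\, \zeta_{\theta}(x_i)) = \mu^{V}_{\theta}(x_i)$, so the product collapses into $\prod_i \mu^{V}_{\theta}(x_i)$. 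Dividing and multiplying by $\exp(N^{2}\beta \mathcal{E}_V^{\theta}(\mu^{V}_{\theta}))$ then produces the normalizing constant $K_{N,\beta}^{\theta}$ and yields the second identity.

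There is no real obstacle here: the argument is a one-line substitution in each case, and the two splitting formulae already stated do all the work. The only point requiring a bit of care is the bookkeeping of the factor $N\beta = \theta$ when converting the integral $\int \zeta \, \mathrm d\, {\rm emp}_{N}$ into a sum over particles, and in the thermal case the use of the definition of $\zeta_{\theta}$ to identify the single-particle weights as $\mu^{V}_{\theta}$.
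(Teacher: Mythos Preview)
Your proposal is correct and is exactly the approach the paper intends: the corollary is stated immediately after the two splitting formulae with no separate proof, so the paper treats it as a direct consequence obtained by substituting \eqref{eq:splittingform} and \eqref{eq:thermspltfrm} into the definition of $\mathbf{P}_{N,\beta}$ and regrouping the constant and single-particle factors, precisely as you do.
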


\section{Proof of asymptotes of the partition functions}
\label{sec:asymps}

In this section, we give the proof of Lemma \ref{lem:partfunc}. We will use the variational characterization of the Gibbs measure, see for example \cite[Chapter 15]{georgii2011gibbs}:
\begin{equation}
\label{eq:varcar}
    -\frac{1}{\beta} \log Z^{V}_{N, \beta} = \min_{\mu \in \mathcal{P}(\mathbf{T}^{d \times N})} \mathcal{F}_{N, \beta} (\mu),
\end{equation}
where
\begin{equation}
    \mathcal{F}_{N, \beta} (\mu) = \int_{\mathbf{T}^{d \times N}} \mathcal{H}_{N}(X_{N}) \mu(X_{N}) \, {\rm d} X_{N} + \frac{1}{\beta} \int_{\mathbf{T}^{d \times N}} \mu(X_{N}) \log (\mu(X_{N})) \, {\rm d} X_{N},
\end{equation}
and $\mathcal{P}(\mathbf{T}^{d \times N})$ denotes the set of probability measures on $\mathbf{T}^{d \times N}$. 

We will also use a new regularization procedure, which is a new technical ingredient. 
\begin{lemma}
\label{lem:regularization}
Let $N>1$, $X_{N} \in \mathbf{T}^{d \times N}$, $g: \mathbf{T}^{d}$ be admissible, and $\alpha>0$. Then for all $t>0$ there exists ${\rm emp}_{N}^{t} \in \mathcal{P}(\mathbf{T}^{d})$ such that
\begin{equation}
        \mathcal{E}\left( {\rm emp}_{N}^{t} \right) \leq  \mathcal{E}^{\neq}\left( {\rm emp}_{N} \right) + C \left( \frac{t^{1-\frac{d}{\gamma}} }{N} +t \right),
\end{equation}
\begin{equation}
       \left| \int_{\mathbf{T}^{d}} f \, \mathrm{d} \left( {\rm emp}_{N}^{t} - {\rm emp}_{N} \right) \right| \leq C \max \left\{ t, t^{\frac{\alpha}{\lambda}} \right\} \|f\|_{W^{\alpha}},
\end{equation}
for some $C>0$ depending only on $g$. We call ${\rm emp}_{N}^{t} $ the {regularized empirical measure}. 
\end{lemma}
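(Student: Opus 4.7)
My approach would use a parabolic semigroup adapted to $g$. Define the convolution operator $P_t$ on distributions by the Fourier multiplier
\begin{equation*}
\widehat{P_t u}(m) = e^{-t/\widehat{g}(m)}\widehat{u}(m) \quad (m \neq 0), \qquad \widehat{P_t u}(0) = \widehat{u}(0),
\end{equation*}
and let $p_t$ be its convolution kernel; this is (up to the harmless normalization at $m=0$) the kernel $p(\cdot,t)$ of assumption~4. By symmetry of $g$ the multiplier is real and even, so $p_t$ is even and $P_t$ is self-adjoint. Set $\mathrm{emp}_N^t := p_t \ast \mathrm{emp}_N$; total mass is preserved, and every inequality below is purely Fourier-analytic, so I will not dwell on whether $\mathrm{emp}_N^t$ is a genuine probability density.

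\emph{First inequality.} Expanding the convolution,
\begin{equation*}
\mathcal{E}(\mathrm{emp}_N^t) = \frac{1}{N^2}\sum_{i,j} q_t(x_i - x_j), \qquad q_t := p_t \ast p_t \ast g, \qquad \widehat{q_t}(m) = e^{-2t/\widehat{g}(m)}\widehat{g}(m) \ (m \neq 0),
\end{equation*}
so that
\begin{equation*}
\mathcal{E}(\mathrm{emp}_N^t) - \mathcal{E}^{\neq}(\mathrm{emp}_N) = \frac{1}{N^2}\sum_{i \neq j}\bigl(q_t - g\bigr)(x_i - x_j) + \frac{q_t(0)}{N}.
\end{equation*}
Differentiating the Fourier representation yields $\partial_t q_t = -2\,\tilde p_{2t}$, where $\tilde p_s$ is $p_s$ with its zero mode subtracted. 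Integrating from $0$ to $t$ and invoking assumption~4 ($p_s \geq -C$ for small $s$) gives the pointwise bound $q_t(x) - g(x) \leq Ct$ for $t$ small. For the diagonal contribution, assumption~2 ($\widehat g(m) \leq C|m|^{-\gamma}$) and the elementary change of variables $u = t^{1/\gamma}|m|$ yield
\begin{equation*}
q_t(0) \lesssim \sum_{m \neq 0}|m|^{-\gamma}e^{-ct|m|^\gamma} + 1 \lesssim t^{1 - d/\gamma}.
\end{equation*}
Combining these two estimates proves the first inequality.

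\emph{Second inequality.} Self-adjointness of $P_t$ gives
\begin{equation*}
\Bigl|\int f\, d(\mathrm{emp}_N^t - \mathrm{emp}_N)\Bigr| = \Bigl|\int (P_t f - f)\,d\mathrm{emp}_N\Bigr| \leq \|P_t f - f\|_{L^\infty} \leq \sum_{m \neq 0}\bigl|e^{-t/\widehat g(m)} - 1\bigr|\,|\widehat f(m)|.
\end{equation*}
Using $|e^{-x}-1| \leq \min(1, x)$ together with assumption~3 ($\widehat g(m) \geq c|m|^{-\lambda}$),
\begin{equation*}
\bigl|e^{-t/\widehat g(m)} - 1\bigr| \leq \min\bigl(1, Ct|m|^\lambda\bigr).
\end{equation*}
Factoring $|m|^{-\alpha}$ inside the sum to pull out the $W^\alpha$-norm reduces matters to estimating $\sup_{|m|\geq 1}\min(1, Ct|m|^\lambda)|m|^{-\alpha}$; an elementary case split around $|m| \sim (Ct)^{-1/\lambda}$, distinguishing $\alpha \gtrless \lambda$, yields $\lesssim \max(t, t^{\alpha/\lambda})$, which is the claimed bound.

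\textbf{Main obstacle.} The key difficulty, and the reason for the somewhat technical assumption~4, is the pointwise off-diagonal bound $q_t - g \leq Ct$. The Fourier decay in assumptions~2--3 gives no sign information on $p_s$ whatsoever, so without the \emph{a priori} lower bound $p_s \geq -C$ one cannot upgrade the identity $\partial_t q_t = -2\tilde p_{2t}$ into a pointwise inequality, and the linear-in-$t$ off-diagonal error simply breaks down. The remaining parts of the argument are essentially routine Fourier bookkeeping.
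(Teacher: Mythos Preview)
Your proof is correct and essentially identical to the paper's: the same semigroup construction $\mathrm{emp}_N^t = p_t \ast \mathrm{emp}_N$, the same decomposition into a diagonal term bounded via assumption~2 and an off-diagonal term controlled through the identity $\partial_t q_t = -2(p_{2t}-1)$ together with assumption~4, and the same $|e^{-x}-1|\leq\min(1,x)$ case split for the test error. Your remark that positivity of $\mathrm{emp}_N^t$ is not verified is apt --- the paper does not verify it either, and the applications only require a signed measure of total mass~$1$.
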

The proof is found in Section \ref{sect:regularization}.

We will now begin the proof of Lemma \ref{lem:partfunc}, restated here:
\begin{lemma}
Assume that $V$ and $g$ are admissible and let $\alpha \geq \gamma$. Define $\theta = N \beta$. Then
\begin{itemize}
\item[a)] \begin{equation}
    -\mathcal{E}_{V}^{\theta}(\mu^{V}_{\theta}) + \frac{1}{N} \mathcal{E}(\mu^{V}_{\theta}) \leq \frac{\log Z^{V}_{N, \beta}}{N^{2} \beta} \leq -\mathcal{E}_{V}^{\theta}(\mu^{V}_{\theta}) +C\left( 1 +\|\mu^{V}_{\theta}\|_{W^{\alpha - \gamma}} \right)N^{p^{*}},
\end{equation}
where 
\begin{equation}
    p^{*} = p^{*} (\alpha, \lambda, \gamma) := \max \left\{ -\frac{\gamma}{d}, \left( \frac{\lambda}{\alpha} - \frac{\lambda d}{\alpha \gamma} -1 \right)^{-1} \right\}.
\end{equation}

    \item[b)] If $\lim_{N \to \infty} \theta = \infty$, then
    \begin{equation}
    \begin{split}
    &-\mathcal{E}_{V}(\mu^{V}_{\infty}) - \frac{1}{\theta} {\rm ent}[\mu^{V}_{\infty}] + \frac{1}{N} \mathcal{E}(\mu^{V}_{\infty}) \leq \frac{\log Z^{V}_{N, \beta}}{N^{2} \beta} \leq\\
    &-\mathcal{E}_{V}(\mu^{V}_{\infty})+C\left( 1+\|\mu^{V}_{\infty}\|_{W^{\alpha - \gamma}} \right)N^{p^{*}}  - \frac{1}{\theta} \left( \log |\Sigma| + o_{N}(1)  \right).
    \end{split}
\end{equation}
\end{itemize}

\end{lemma}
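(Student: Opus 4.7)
The plan is to use the variational characterization \eqref{eq:varcar}, which reduces the problem to two-sided estimates on $\min_{\mu} \mathcal{F}_{N,\beta}(\mu)$. For the lower bounds on $\log Z^{V}_{N,\beta}$ in both parts, I would evaluate $\mathcal{F}_{N,\beta}$ at the product test measure $(\mu^{V}_{\theta})^{\otimes N}$ in (a) and $(\mu^{V}_{\infty})^{\otimes N}$ in (b). By independence of coordinates,
\begin{equation}
\mathbb{E}_{\mu^{\otimes N}}\left[\sum_{i\neq j}g(x_{i}-x_{j})\right] = N(N-1)\mathcal{E}(\mu), \qquad {\rm ent}[\mu^{\otimes N}] = N\,{\rm ent}[\mu],
\end{equation}
so that $\mathcal{F}_{N,\beta}(\mu^{\otimes N}) = N^{2}\mathcal{E}_{V}^{\theta}(\mu) - N\mathcal{E}(\mu)$. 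Dividing by $N^{2}\beta$, and expanding $\mathcal{E}_{V}^{\theta}(\mu^{V}_{\infty}) = \mathcal{E}_{V}(\mu^{V}_{\infty}) + \frac{1}{\theta}{\rm ent}[\mu^{V}_{\infty}]$ in (b), gives exactly the claimed lower bounds.

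For the upper bounds I need a lower bound on $\mathcal{F}_{N,\beta}(\mu)$ that is uniform in $\mu \in \mathcal{P}(\mathbf{T}^{d\times N})$. I would apply the splitting formula---the thermal \eqref{eq:thermspltfrm} in case (a), the equilibrium \eqref{eq:splittingform} in case (b)---to write $\mathcal{H}_{N} = N^{2}C_{*} + N^{2}\,{\rm F}_{N}(X_{N},\mu_{*}) + N^{2}\int \zeta_{*}\, \mathrm{d}\,{\rm emp}_{N}$, where $(C_{*},\mu_{*},\zeta_{*})$ equals $(\mathcal{E}_{V}^{\theta}(\mu^{V}_{\theta}),\mu^{V}_{\theta},\zeta_{\theta})$ in (a) and $(\mathcal{E}_{V}(\mu^{V}_{\infty}),\mu^{V}_{\infty},\zeta_{\infty})$ in (b). Pairing the one-body term with $\frac{1}{\beta}{\rm ent}[\mu]$ and applying Gibbs' inequality yields the exact minimum $-\frac{N}{\beta}\log \int e^{-\theta \zeta_{*}}$: in case (a), $\theta \zeta_{\theta} = -\log \mu^{V}_{\theta}$, so $\int e^{-\theta \zeta_{*}} = 1$ and this minimum vanishes; in case (b), since $\zeta_{\infty} \ge 0$ vanishes precisely on $\Sigma$, dominated convergence gives $\int e^{-\theta \zeta_{\infty}} = |\Sigma| + o(1)$ as $\theta \to \infty$, which is the source of the $\log |\Sigma|/\theta$ asymptote.

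The technically central step is a lower bound on ${\rm F}_{N}(X_{N},\mu_{*})$ uniform in $X_{N}$. I would expand
\begin{equation}
{\rm F}_{N}(X_{N},\mu_{*}) = \mathcal{E}(\mu_{*}) - 2\,\mathcal{G}(\mu_{*},{\rm emp}_{N}) + \mathcal{E}^{\neq}({\rm emp}_{N}),
\end{equation}
invoke Lemma~\ref{lem:regularization} to produce ${\rm emp}_{N}^{t} \in \mathcal{P}(\mathbf{T}^{d})$, and use its two conclusions to swap ${\rm emp}_{N}$ for ${\rm emp}_{N}^{t}$. The first gives $\mathcal{E}^{\neq}({\rm emp}_{N}) \ge \mathcal{E}({\rm emp}_{N}^{t}) - C(t^{1-d/\gamma}/N + t)$. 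The second, applied to $f = h(\mu_{*})$ and coupled with the elementary bound $\|h(\mu_{*})\|_{W^{\alpha}} \le C\|\mu_{*}\|_{W^{\alpha-\gamma}}$ (which follows from $\widehat{g}(m) \le C|m|^{-\gamma}$), controls the cross term: $|\mathcal{G}(\mu_{*},{\rm emp}_{N} - {\rm emp}_{N}^{t})| \le C \max\{t, t^{\alpha/\lambda}\} \|\mu_{*}\|_{W^{\alpha-\gamma}}$. Gathering,
\begin{equation}
{\rm F}_{N} \ge \mathcal{E}(\mu_{*} - {\rm emp}_{N}^{t}) - C\left[\frac{t^{1-d/\gamma}}{N} + \max\{t, t^{\alpha/\lambda}\}\left(1 + \|\mu_{*}\|_{W^{\alpha-\gamma}}\right)\right],
\end{equation}
and positive definiteness of $g$ (item 3 of Definition~\ref{def:admissible}) makes the first term nonnegative.

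The last step is to optimize over $t>0$. Balancing $t^{1-d/\gamma}/N$ against $t$ produces the branch $N^{-\gamma/d}$ of $p^{*}$, while balancing against $t^{\alpha/\lambda}$ produces the branch $(\lambda/\alpha - \lambda d/(\alpha \gamma) - 1)^{-1}$; the maximum of the two is $p^{*}$, which is attained by the relevant regime depending on the sign of $\alpha - \lambda$. The main obstacle I anticipate is precisely the uniform-in-$X_{N}$ lower bound on ${\rm F}_{N}$: both conclusions of Lemma~\ref{lem:regularization} must be deployed simultaneously, the Sobolev control $\|h(\mu_{*})\|_{W^{\alpha}} \le C\|\mu_{*}\|_{W^{\alpha-\gamma}}$ must be used with the sharp Fourier exponent, and the three competing error scales $t^{1-d/\gamma}/N$, $t$, and $t^{\alpha/\lambda}\|\mu_{*}\|_{W^{\alpha-\gamma}}$ must be balanced together to produce the exponent $p^{*}$. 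Everything else in the proof is essentially bookkeeping.
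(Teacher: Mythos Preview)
Your proposal is correct and follows essentially the same approach as the paper's proof: the lower bounds via the product test measures in the variational characterization, the upper bounds via the splitting formulae combined with the regularization Lemma~\ref{lem:regularization} to bound ${\rm F}_{N}$ from below, the Fourier estimate $\|h(\mu_{*})\|_{W^{\alpha}}\le C\|\mu_{*}\|_{W^{\alpha-\gamma}}$, the Gibbs/relative-entropy inequality to handle the one-body and entropy terms, and the same optimization in $t$ yielding the two branches of $p^{*}$. The only cosmetic difference is that the paper writes out the relative-entropy step explicitly via $p_{N,\beta}$ and $z^{V}_{N,\beta}$, whereas you phrase it as a direct application of Gibbs' inequality.
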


\begin{proof}

\textbf{Step 1:} Proof of lower bounds.

Taking $(\mu^{V}_{\infty})^{\otimes N}$ as a test function in equation \eqref{eq:varcar}, we obtain
\begin{equation}
\label{eq:test}
    -\frac{1}{\beta} \log Z^{V}_{N, \beta} \leq \int_{\mathbf{T}^{d \times N}} \mathcal{H}_{N}(X_{N}) \Pi_{i=1}^{N} \mu^{V}_{\infty}(x_{i}) \, {\rm d} x_{i} + \frac{N}{\beta} \int_{\mathbf{T}^{d \times N}} \mu^{V}_{\infty}(x) \log (\mu^{V}_{\infty}(x)) \, {\rm d} x.
\end{equation}

Expanding $\mathcal{H}_{N}(X_{N})$ and carrying out the integrals, equation \eqref{eq:test} can be simplified to
\begin{equation}
    -\frac{1}{\beta} \log Z^{V}_{N, \beta} \leq N^{2} \mathcal{E}_{V}(\mu^{V}_{\infty}) - \mathcal{E}(\mu^{V}_{\infty})  + \frac{N}{\beta} {\rm ent}[\mu_{\infty}^{V}],    
\end{equation}
which yields
\begin{equation}
   \log Z^{V}_{N, \beta} \geq  -N^{2} \beta \mathcal{E}_{V}(\mu^{V}_{\infty}) + \beta \mathcal{E}(\mu^{V}_{\infty}) - N {\rm ent}[\mu_{\infty}^{V}].
\end{equation}

Similarly, taking $(\mu^{V}_{\theta})^{\otimes N}$ as a test function in equation \eqref{eq:varcar}, we obtain 
\begin{equation}
     -\frac{1}{\beta} \log Z^{V}_{N, \beta} \leq N^{2} \mathcal{E}_{V}^{\theta}(\mu^{V}_{\theta}) - \mathcal{E}(\mu^{V}_{\theta}).
\end{equation}

\textbf{Step 2:} Proof of upper bound.

For the remainder of the proof, we will introduce the notation
\begin{equation}
    z^{V}_{N, \beta} := \int_{\mathbf{T}^{d}} \exp \left( - \theta \zeta_{\infty} (x) \right) \, {\rm d} x, 
\end{equation}
and
\begin{equation}
    p_{N, \beta} = \frac{1}{z^{V}_{N, \beta}} \rho_{N},
\end{equation}
with $\rho_{N}$ given by equation \eqref{eq:rho}. 

Let $t>0$ to be determined later, and let ${\rm emp}_{N}^{t}$ denote the regularized empirical measure (see Lemma \ref{lem:regularization}). By Lemma \ref{lem:regularization}, we have that
\begin{equation}
\label{eq:bound}
\begin{split}
    {\rm F}_{N}(X_{N}, \mu^{V}_{\infty}) & = \mathcal{E}^{\neq}({\rm emp}_{N}) -2 \int_{\mathbf{T}^{d}} h(\mu^{V}_{\infty}) \, \mathrm d {\rm emp}_{N} + \mathcal{E}(\mu^{V}_{\infty})\\
    &\geq \mathcal{E}({\rm emp}_{N}^{t} - \mu^{V}_{\infty}) - C \left( \frac{t^{1-\frac{d}{\gamma}} }{N} +t \right)-C \max \left\{ t, t^{\frac{\alpha}{\lambda}} \right\} \|h(\mu^{V}_{\infty})\|_{W^{\alpha}}.
\end{split}    
\end{equation}
Note that, by assumption 2 of Definition \ref{def:admissible}, 
\begin{equation}
    \|h(\mu^{V}_{\infty})\|_{W^{\alpha}} \leq C \|\mu^{V}_{\infty}\|_{W^{\alpha - \gamma}}.
\end{equation}

The optimal $t$ in equation \eqref{eq:bound} is given by $t=N^{-\frac{\gamma}{d}}$ if $\alpha \geq \lambda$; and $t = N^{\left(1 - \frac{d}{\gamma} - \frac{\alpha}{\lambda}\right)^{-1}}$ if $\alpha < \lambda$. The result is that 
\begin{equation}
\begin{split}
   {\rm F}_{N}(X_{N}, \mu^{V}_{\infty}) &\geq \mathcal{E}({\rm emp}_{N}^{t} - \mu^{V}_{\infty}) - C \left(  1+ \|\mu^{V}_{\infty}\|_{W^{\alpha - \gamma}} \right) \max\left\{N^{-\frac{\gamma}{d}},  N^{\left( \frac{\lambda}{\alpha} - \frac{\lambda d}{\alpha \gamma} -1 \right)^{-1}} \right\}\\
   &\geq- C\left(  1+ \|\mu^{V}_{\infty}\|_{W^{\alpha - \gamma}} \right)N^{p^{*}}.
\end{split}   
\end{equation}

We then have that, for any probability measure $\mu \in \mathcal{P}(\mathbf{T}^{d \times N})$,
\begin{equation}
    \begin{split}
        &\mathcal{F}_{N, \beta}(\mu) \\
         \geq &N^{2}\left(\mathcal{E}_{V}(\mu^{V}_{\infty})-C\left(  1+ \|\mu^{V}_{\infty}\|_{W^{\alpha - \gamma}} \right)N^{p^{*}}\right) + N \int_{\mathbf{T}^{d \times N}} \sum_{i=1}^{N} \zeta_{\infty}(x_{i}) \, {\rm d} X_{N} \\
         & \ \ \ \ \ \ + \frac{1}{\beta} \int_{\mathbf{T}^{d \times N}} \mu(X_{N}) \log (\mu(X_{N})) \, {\rm d} X_{N}\\
         = & N^{2}\left(\mathcal{E}_{V}(\mu^{V}_{\infty})-C\left(  1+ \|\mu^{V}_{\infty}\|_{W^{\alpha - \gamma}} \right)N^{p^{*}}\right) + \frac{1}{\beta} \int_{\mathbf{T}^{d \times N}}  \sum_{i=1}^{N} \log \left( z_{N, \beta}\frac{\exp \left( \theta \zeta_{\infty}(x_{i}) \right)}{z_{N, \beta}} \right) \, {\rm d} X_{N} \\
         & \ \ \ \ \ \ + \frac{1}{\beta} \int_{\mathbf{T}^{d \times N}} \mu(X_{N}) \log (\mu(X_{N})) \, {\rm d} X_{N}\\
        = & N^{2}\left(\mathcal{E}_{V}(\mu^{V}_{\infty})-C\left(  1+ \|\mu^{V}_{\infty}\|_{W^{\alpha - \gamma}} \right)N^{p^{*}}\right) + \frac{N}{\beta} \log (z_{N, \beta}) + \frac{1}{\beta} {\rm ent}[\mu | p_{N, \beta}^{\otimes N}].
    \end{split}
\end{equation}
where the relative entropy of two probability measures, $\mu, \nu$ is defined as 
\begin{equation}
    {\rm ent}[\mu | \nu] := \int \mu \log \left( \frac{\mu}{\nu} \right) \, \mathrm d x.
\end{equation}

A simple, well-known application of Jensen's inequality implies that
\begin{equation}
    {\rm ent}[\mu | p_{N, \beta}^{\otimes N}] \geq 0.
\end{equation}
On the other hand, if $\lim_{N \to \infty} \theta = \infty$, then
\begin{equation}
    \lim_{N \to \infty} \log (z_{N, \beta}) = \log |\Sigma|.
\end{equation}

This yields
\begin{equation}
    \log Z^{V}_{N, \beta} \leq N^{2} \beta \left( -\mathcal{E}_{V}(\mu^{V}_{\infty}) + C \left( 1 + \|\mu^{V}_{\infty}\|_{W^{\alpha - \gamma}} \right) N^{p^{*}} \right) - N \left( \log |\Sigma| + o(1)  \right).
\end{equation}

The proof of the upper bound in terms of $\mathcal{E}_{V}^{\theta }(\mu_{\theta}^{V})$ is similar: we use the regularization procedure of Lemma \ref{lem:regularization} to obtain a lower bound for $\mathrm{F}_{N} \left( X_{N}, \mu_{\theta}^{V} \right)$, and then this lower bound to obtain a lower bound for $\mathcal{F}_{N, \beta}(\mu) $ for arbitrary $\mu$. 
\end{proof}

\section{Proof of concentration bounds}
\label{sec:conccentration}

In this section, we will show the proof of Theorems \ref{teo:concup} and \ref{teo:conclow}. The proof will require the following lemma:
\begin{lemma}
\label{lem:firstconc}
Let $r \in \mathbf{R}^{+}$ and $X_{N} \in \mathbf{T}^{d \times N}$ be a pair-wise distinct  point configuration. Then
\begin{equation}
\label{eq:thermal}
   \mathbf{P}_{N, \beta} \left( {\rm F}_{N}(X_{N}, \mu^{V}_{\theta}) \geq r \right) \leq \exp \left( - N^{2} \beta r \right)
\end{equation}
and, if $\lim_{N \to \infty} \theta = \infty$,
\begin{equation}
\label{eq:equilibrium}
   \mathbf{P}_{N, \beta} \left( {\rm F}_{N}(X_{N}, \mu^{V}_{\infty}) \geq r \right) \leq \exp \left( - N^{2} \beta r + N \left( {\rm ent}[\mu^{V}_{\infty}] - \log|\Sigma| + o_{N}(1) \right) \right).
\end{equation}
\end{lemma}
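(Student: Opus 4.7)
The plan is to exploit the ``alternative'' form of the Gibbs measure provided by Corollary \ref{cor:Gibbs}, in which the two-body interaction is entirely packaged into $\mathrm{F}_N(X_N,\cdot)$, and then do a one-line Markov-type bound on the event $\{\mathrm{F}_N \ge r\}$. For the thermal estimate \eqref{eq:thermal}, I would write
\begin{equation*}
\mathbf{P}_{N,\beta}\bigl(\mathrm{F}_N(X_N,\mu^V_\theta)\ge r\bigr)=\frac{1}{K^\theta_{N,\beta}}\int_{\{\mathrm{F}_N\ge r\}}\!\!\exp\bigl(-N^2\beta\,\mathrm{F}_N(X_N,\mu^V_\theta)\bigr)\prod_i\mu^V_\theta(x_i)\,\mathrm dX_N,
\end{equation*}
bound the exponential pointwise by $\exp(-N^2\beta r)$, and use that $\mu^V_\theta$ is a probability measure so the remaining product integrates to $1$. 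The result then reduces to checking $K^\theta_{N,\beta}\ge 1$, which I would extract directly from the lower bound of Lemma \ref{lem:partfunc}~a): tracing the definition $K^\theta_{N,\beta}=Z^V_{N,\beta}\exp(N^2\beta\mathcal{E}^\theta_V(\mu^V_\theta))$ and using that lower bound yields $\log K^\theta_{N,\beta}\ge N\beta\,\mathcal{E}(\mu^V_\theta)\ge 0$ by the positive definiteness of $g$.

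For \eqref{eq:equilibrium} I would run exactly the same argument using the second representation in Corollary \ref{cor:Gibbs}, based on $\mu^V_\infty$ and the weight $\rho_N(x)=\exp(-\theta\zeta_\infty(x))$. The difference is that $\rho_N$ is not a probability density: integration produces $(z^V_{N,\beta})^N$ where $z^V_{N,\beta}:=\int_{\mathbf{T}^d}\exp(-\theta\zeta_\infty)\,\mathrm dx$. One therefore gets
\begin{equation*}
\log\mathbf{P}_{N,\beta}\bigl(\mathrm{F}_N\ge r\bigr)\le -N^2\beta r+N\log z^V_{N,\beta}-\log K^\infty_{N,\beta}.
\end{equation*}
The term $-\log K^\infty_{N,\beta}\le N\,\mathrm{ent}[\mu^V_\infty]$ comes (again) from the lower bound of Lemma \ref{lem:partfunc}~b), after dropping the non-negative $\mathcal{E}(\mu^V_\infty)$ contribution; this is the origin of the entropy term in \eqref{eq:equilibrium}.

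The only genuinely analytic step is controlling $\log z^V_{N,\beta}$ as $\theta\to\infty$. Here I would invoke the First-Order Condition \eqref{ELeq}, which after the cosmetic rearrangement $\zeta_\infty = V+2h(\mu^V_\infty)-c_\infty$ (so that $\zeta_\infty\ge 0$ with $\zeta_\infty\equiv 0$ on $\Sigma$) shows that $\exp(-\theta\zeta_\infty)$ is bounded by $1$ and converges pointwise to $\mathbf 1_\Sigma$; dominated convergence on the compact torus then gives $z^V_{N,\beta}=|\Sigma|+o_N(1)$ and thus $\log z^V_{N,\beta}=\log|\Sigma|+o_N(1)$. Plugging this into the display above produces the advertised upper bound.

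The main (and really only) subtlety is this last step: one has to identify the correct sign of $\zeta_\infty$ so that the saddle-point analysis of $z^V_{N,\beta}$ concentrates on $\Sigma$ rather than blowing up, and then match the resulting $|\Sigma|$ term against the entropy contribution coming from Lemma \ref{lem:partfunc}~b). Once those two ingredients are aligned, the proof is a single application of the Chernoff/Markov trick built into the splitting formulation of the Gibbs measure, and no further estimates on the interaction kernel or on $\mathrm{F}_N$ are required.
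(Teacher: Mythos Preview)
Your proposal is correct and follows essentially the same route as the paper: both use the representation of the Gibbs measure from Corollary~\ref{cor:Gibbs}, bound $\exp(-N^2\beta\,\mathrm{F}_N)$ by $\exp(-N^2\beta r)$ on the event, invoke the lower bound on the partition function from Lemma~\ref{lem:partfunc} to control $1/K_{N,\beta}$, and finish with $z^V_{N,\beta}\to|\Sigma|$. You even make explicit the dominated-convergence argument for $z^V_{N,\beta}$ that the paper only states, and you correctly identify the sign convention needed for $\zeta_\infty$.
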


\begin{proof}
We first prove equation \eqref{eq:equilibrium}. Using Corollary \ref{cor:Gibbs} and Lemma \ref{lem:partfunc} we have that
\begin{equation}
    \begin{split}
        &\mathbf{P}_{N, \beta} \left( {\rm F}_{N}(X_{N}, \mu^{V}_{\infty}) \geq r \right) \\
        =  &\frac{1}{K_{N, \beta}^{\infty}} \int_{{\rm F}_{N}(X_{N}, \mu^{V}_{\infty}) \geq r} \exp \left( - N^{2} \beta \left[ {\rm F}_{N}(X_{N}, \mu^{V}_{\infty}) \right]  \right) \Pi_{i=1}^{N} \rho_{N}(x_{i}) \, \mathrm d X_{N} \\
        \leq & \exp \left( N {\rm ent}[\mu^{V}_{\infty}] \right) \int_{{\rm F}_{N}(X_{N}, \mu^{V}_{\infty}) \geq r} \exp \left( - N^{2} \beta \left[ {\rm F}_{N}(X_{N}, \mu^{V}_{\infty}) \right]  \right) \Pi_{i=1}^{N} \rho_{N}(x_{i}) \, \mathrm d X_{N} \\
        \leq & \exp \left( N ({\rm ent}[\mu^{V}_{\infty}] - \log z_{N} )  \right) \exp \left( - N^{2} \beta r  \right) \int_{{\rm F}_{N}(X_{N}, \mu^{V}_{\infty}) \geq r} \Pi_{i=1}^{N} p_{N, \beta}(x_{i}) \, \mathrm d X_{N} \\
        \leq & \exp \left( - N^{2} \beta r+N ({\rm ent}[\mu^{V}_{\infty}] - \log z_{N} ) \right). 
    \end{split}
\end{equation}

Since $\lim_{N \to \infty} z_{N} = |\Sigma|$, we can conclude. 

The proof of equation \eqref{eq:thermal} is analogous. 
\end{proof}

We will also use another lemma, whose proof is postponed to the appendix. 
\begin{lemma}
\label{lem:construction}
Let $\varphi$ be a non-negative function such that $\int_{\mathbf{T}^{d}} \varphi =1$ and $\supp{\varphi} \subset \Sigma$, $f$ be a continuous function, and $g:\mathbf{T}^{d}\to \mathbf{R}$ be an admissible kernel. Then for any $p < \frac{1}{d}$ and $\tau >0$ there exists a family of configurations $\Lambda_{N} \subset \mathbf{T}^{d \times N}$ such that
\begin{itemize}
    \item[1.] For any $X_{N} \in \Lambda_{N}$,
    \begin{equation}
        \left| {\rm F}_{N}(X_{N}, \mu^{V}_{\infty}) - \mathcal{E}(\varphi - \mu^{V}_{\infty}) \right| \leq N^{p(s-d)}.
    \end{equation}
    
    \item[2.] \begin{equation}
        \log \left( p_{N, \beta}^{\otimes N} \left( \Lambda_{N} \right) \right) \geq -N \left( {\rm ent}[\varphi| p_{N, \beta}] + \tau \right).
    \end{equation}
    
    \item[3.] For any $X_{N} \in \Lambda_{N}$ and $\alpha>0$,
    \begin{equation}
    \label{eq:testerror}
       \left| \frac{1}{N} \sum_{i=1}^{N} f(x_{i}) - \int_{\mathbf{T}^{d}} f \varphi \, {\rm d} x \right| \leq 
        C N^{- \alpha p }  \|f\|_{W^{\alpha}}.
    \end{equation}
\end{itemize}
\end{lemma}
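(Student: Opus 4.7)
Partition $\mathbf{T}^d$ into $K_N \asymp N^{pd}$ cubes $\{C_k\}$ of side $\ell := TN^{-p}$, set $m_k := \lfloor N\int_{C_k}\varphi\,\mathrm{d} x\rfloor$ with at most $K_N$ unit corrections ensuring $\sum_k m_k = N$, and pick small $\delta > 0$ with $s\delta < p(s-d)+1-s/d$, a nonempty range since $p<1/d$ and $s<d$. Subdivide each $C_k$ into $M_k := \lceil N^{1+\delta d-pd}\rceil$ congruent sub-cubes $\{D_{k,j}\}$ of side $\ell_1 := \ell M_k^{-1/d} \asymp N^{-1/d-\delta}$, and let $D_{k,j}^{\rm in}$ denote the concentric sub-cube of side $\ell_1(1-2/\log N)$. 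Define $\Lambda_N \subset \mathbf{T}^{d\times N}$ as the set of labeled configurations that place exactly one particle in $D_{k,j}^{\rm in}$ for $m_k$ freely chosen sub-cubes within each $C_k$.

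\textbf{Items 3 and 1.} For any $X_N \in \Lambda_N$, each particle lies within $\ell_1 \asymp N^{-1/d}$ of its sub-cube center; by Remark \ref{rem:Holder}, $\|f\|_{C^\alpha} \leq C\|f\|_{W^\alpha}$, and a Riemann-sum approximation of $\frac{1}{N}\sum_i f(\bar D_{k(i),j(i)})$ by $\int f\varphi\,\mathrm{d} x$ together with the H\"older perturbation $|f(x_i)-f(\bar D)| \leq C\|f\|_{W^\alpha}N^{-\alpha/d}$ yields item 3 with total error $\leq C\|f\|_{W^\alpha}N^{-\alpha p}$, since $p<1/d$. For item 1, I write $F_N(X_N,\mu_\infty^V) - \mathcal{E}(\varphi - \mu_\infty^V) = [\mathcal{E}^{\neq}({\rm emp}_N) - \mathcal{E}(\varphi)] - 2\int h(\mu_\infty^V)\,\mathrm{d}({\rm emp}_N - \varphi)$; the second term is bounded by item 3 applied with $f = h(\mu_\infty^V)$, whose Sobolev norm is finite by assumption 2 of Definition \ref{def:admissible}. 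The first term is bounded by sub-cube-wise comparison: with minimum particle separation $\gtrsim \ell_1/\log N$, the gradient bound $|\nabla g| \leq C|x|^{-s-1}$ yields a total error of order $N^{s(1/d+\delta)-1}(\log N)^s$, which is $\leq N^{p(s-d)}$ for $N$ large by the choice of $\delta$.

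\textbf{Item 2.} The Lebesgue volume of $\Lambda_N$ equals $N!\prod_k\binom{M_k}{m_k}\ell_1^{dN}(1-2/\log N)^{dN}$. Stirling gives $\log N! = N\log N - N + o(N)$; the expansion $\log\binom{M_k}{m_k} \approx m_k\log(M_k/m_k) + m_k$ (valid since $M_k/m_k \asymp N^{\delta d}/\varphi \to \infty$) summed with $\log(M_k/m_k) = \delta d\log N - \log\varphi(\bar C_k)$ and $\sum_k m_k = N$ yields $N\delta d\log N - N\,\mathrm{ent}[\varphi] + N$; the scaling contributes $Nd\log\ell_1 = -N\log N - N\delta d\log N$; the shrinkage adds $-o(N)$. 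All $N\log N$, $N\delta d\log N$, and $\pm N$ terms cancel exactly, leaving $-N\,\mathrm{ent}[\varphi] + o(N)$. Integrating against $p_{N,\beta}^{\otimes N}$ replaces inner volumes by local values of $p_{N,\beta}$, adding $\sum_{k,j}\log p_{N,\beta}(\bar D_{k,j}) = N\int\varphi\log p_{N,\beta}\,\mathrm{d} x + o(N)$ via Riemann approximation. The result is $\log p_{N,\beta}^{\otimes N}(\Lambda_N) = -N\,\mathrm{ent}[\varphi\mid p_{N,\beta}] + o(N)$, yielding item 2 for $N$ large and any fixed $\tau > 0$.

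\textbf{Main obstacle.} The key delicacy is balancing two conflicting constraints: $\delta > 0$ strictly is necessary so that the binomial $\binom{M_k}{m_k}$ generates the $+N$ term cancelling the $-N$ from Stirling (without this mechanism, the rigid ``one-particle-per-sub-cube'' construction with $M_k = m_k$ would only reach $\tau \geq 1$), while $\delta$ must also be small enough that the reduced minimum separation $\ell_1/\log N = N^{-1/d-\delta}/\log N$ keeps the self-energy below $N^{p(s-d)}$. The feasible window $\delta \in (0, (p(s-d)+1-s/d)/s)$ is nonempty precisely when $p < 1/d$ and $s < d$; the logarithmic case of assumption 5b is treated analogously with $s = 0$, for which the constraint is automatic.
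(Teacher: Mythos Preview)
Your construction and the volume computation (item 2) are correct and form a nice alternative to the paper's hard-sphere packing: the two-level subdivision with a binomial count is cleaner combinatorially than the paper's sequential exclusion-ball argument. Item 3 is also fine.

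The gap is in item 1. Your $\Lambda_N$ allows the $m_k$ occupied sub-cubes within each $C_k$ to be \emph{freely chosen}, in particular all adjacent in one corner. For such a clustered configuration the empirical measure on $C_k$ has effective density $\sim N^{\delta d}\|\varphi\|_{L^\infty}$ on a region of diameter $m_k^{1/d}\ell_1\sim \ell N^{-\delta}$, and the same-big-cube contribution to $\mathcal{E}^{\neq}(\mathrm{emp}_N)$ exceeds that of $\mathcal{E}(\varphi)$ by a factor $\sim N^{s\delta}$; summed over cubes this gives an error $\sim \ell^{d-s}N^{s\delta}=N^{p(s-d)+s\delta}$, which for any $\delta>0$ strictly exceeds the target $N^{p(s-d)}$. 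Your stated bound $N^{s(1/d+\delta)-1}(\log N)^s$ is only the error from smoothing each $\delta_{x_i}$ over its own sub-cube (comparing $\mathrm{emp}_N$ to its sub-cube average $\tilde{\mathrm{emp}}_N$); you never control $\mathcal{E}(\tilde{\mathrm{emp}}_N)-\mathcal{E}(\varphi)$, and that term is not small uniformly over $\Lambda_N$.

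The paper's construction avoids this: the hard-sphere separation $\tau=a\bar\eta n_j^{-1/d}\sim aN^{-1/d}$ is at the natural scale, so the local density of \emph{every} configuration in $\Lambda_N$ is capped at $O(\|\varphi\|_{L^\infty}/a^d)$, and the energy estimate (Substeps 4.2--4.3, using an explicit $\epsilon$-smoothing and a near/far splitting of $h^{\mathrm{emp}_N^\epsilon-\varphi}$) goes through uniformly. There is no auxiliary $\delta>0$; the volume is computed directly as a product of shrinking regions, and the $+N$ that you extract from the binomial is replaced by the factor $\prod_p(1-\tfrac{k_d\tau}{\bar\eta}-\tfrac{c_d p a^d}{n_j})$, whose log is $\geq -CaN$ and is absorbed by taking $a$ small. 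Your ``main obstacle'' paragraph correctly senses the tension between items 1 and 2, but only at the self-energy level; the clustering error is a second, more serious manifestation of the same tension, and it cannot be cured by making $\delta$ small while keeping it positive. A possible repair is to restrict $\Lambda_N$ to configurations where the occupied sub-cubes in each $C_k$ are well-spread (e.g.\ one per coarser cell of size $\sim N^{-1/d}$), but then the binomial freedom disappears and you are essentially back to the paper's mechanism for the volume.
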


The proof is found in Section \ref{sec:app2}.

We will also use the regularization procedure, see Lemma \ref{lem:regularization}. We now continue to the proof of Theorems \ref{teo:concup} and \ref{teo:conclow}. 
\begin{proof} 

\textbf{Step 1:} Proof of bounds for weakly admissible kernels.

We start by giving the proof of equation \eqref{eq:upboundweak}. As a consequence of the LDP proved in \cite{garcia2019large}, we have that
\begin{equation}
\begin{split}
    \limsup_{N \to \infty} \frac{1}{N^{2} \beta} \log \left( \mathbf{P}_{N, \beta} \left( \left| {\rm Fluct}_{\infty}[f]  \right| \geq r \right) \right) &= - \inf_{ \mu \in \mathcal{P}(\mathbf{T}^{d}), \left| \int_{\mathbf{T}^{d}} f \, \mathrm d ( \mu - \mu_{\infty}^{V})  \right| \geq r} \mathcal{E}_{V}(\mu) - \inf_{ \mu \in \mathcal{P}(\mathbf{T}^{d})} \mathcal{E}_{V}(\mu) \\
    &= - \inf_{ \mu \in \mathcal{P}(\mathbf{T}^{d}), \left| \int_{\mathbf{T}^{d}} f \, \mathrm d ( \mu - \mu_{\infty}^{V})  \right| \geq r} \mathcal{E}_{V}(\mu) - \mathcal{E}_{V}(\mu_{\infty}^{V}).  
\end{split}    
\end{equation}

We will use the notation $S := \{ \mu \in \mathcal{P}(\mathbf{T}^{d}), \left| \int_{\mathbf{T}^{d}} f \, \mathrm d ( \mu - \mu_{\infty}^{V}) \right| \geq r\}$,  $S_{+} := \{ \mu \in \mathcal{P}(\mathbf{T}^{d}),  \int_{\mathbf{T}^{d}} f \, \mathrm d ( \mu - \mu_{\infty}^{V}) \geq r\}$, and $S_{-} := \{ \mu \in \mathcal{P}(\mathbf{T}^{d}), \int_{\mathbf{T}^{d}} f \, \mathrm d ( \mu - \mu_{\infty}^{V})  \leq -r\}$. Note that $S = S_{+} \cup S_{-}$ and also that $S_{+}$ and $S_{-}$ are convex and closed in the topology of weak convergence. Therefore the minimum of $\mathcal{E}_{V}$ on $S$ is achieved by a probability measure, denoted $\mu_{*}$.  

We claim that $\mu_{*}$ lies on the boundary of $S$, i.e. it satisfies that $\left| \int_{\mathbf{T}^{d}} f \, \mathrm d ( \mu_{*} - \mu_{\infty}^{V}) \right| = r$. To show this claim, we proceed by contradiction and assume that $\mu_{*}$ lies in the interior of $S$. Then $\mu_{*}$ is a critical point of $\mathcal{E}_{V}$, which by uniqueness implies that $\mu_{*} = \mu_{\infty}^{V}$, but $\mu_{\infty}^{V} \notin S$. This is a contradiction and therefore $\left| \int_{\mathbf{T}^{d}} f \, \mathrm d ( \mu_{*} - \mu_{\infty}^{V}) \right| = r$. For the remainder of the proof, we will assume that $\int_{\mathbf{T}^{d}} f \, \mathrm d ( \mu_{*} - \mu_{\infty}^{V}) = r$. If we assume that $\int_{\mathbf{T}^{d}} f \, \mathrm d ( \mu_{*} - \mu_{\infty}^{V}) = -r$, the proof is analogous.

In order to find $\mu_{*}$, we introduce the Lagrange multiplier:
\begin{equation}
\label{eq:lagrangemult}
 \mathcal{E}_{V}(\mu) - \mathcal{E}_{V}(\mu_{\infty}^{V}) + \lambda \left(  \int_{\mathbf{T}^{d}} f \, \mathrm d ( \mu - \mu_{\infty}^{V}) -r \right).     
\end{equation}
A standard computation in the Calculus of Variations shows that equation \eqref{eq:lagrangemult} has the same critical values as 
\begin{equation}
 \min_{ \mu \in \mathcal{P}(\mathbf{T}^{d}),  \int_{\mathbf{T}^{d}} f \, \mathrm d ( \mu - \mu_{\infty}^{V}) = r } \mathcal{E}_{V}(\mu) - \mathcal{E}_{V}(\mu_{\infty}^{V}).     
\end{equation}

Using the splitting formula (Proposition \ref{Prop:splitting}) and the fact that $\zeta_{\infty}$ is non-negative, we have that 
\begin{equation}
\label{eq:upboundlagrange}
    \begin{split}
         &\min_{ \mu \in \mathcal{P}(\mathbf{T}^{d})} \mathcal{E}_{V}(\mu) - \mathcal{E}_{V}(\mu_{\infty}^{V}) + \lambda \left(  \int_{\mathbf{T}^{d}} f \, \mathrm d ( \mu - \mu_{\infty}^{V}) -r \right) \\
         =&  \min_{ \mu \in \mathcal{P}(\mathbf{T}^{d})} \mathcal{E}(\mu - \mu_{\infty}^{V}) + \int_{\mathbf{T}^{d}} \zeta_{\infty} \, \mathrm d \mu + \lambda \left(  \int_{\mathbf{T}^{d}} f \, \mathrm d ( \mu - \mu_{\infty}^{V}) -r \right) \\
         \geq &  \min_{ \mu \in \mathcal{P}(\mathbf{T}^{d})} \mathcal{E}(\mu - \mu_{\infty}^{V}) + \lambda \left(  \int_{\mathbf{T}^{d}} f \, \mathrm d ( \mu - \mu_{\infty}^{V}) -r \right) \\
          \geq &  \min_{ \mu \in \mathcal{M}(\mathbf{T}^{d})} \mathcal{E}(\mu - \mu_{\infty}^{V}) + \lambda \left(  \int_{\mathbf{T}^{d}} f \, \mathrm d ( \mu - \mu_{\infty}^{V}) -r \right), 
    \end{split}
\end{equation}
where $\mathcal{M}(\mathbf{T}^{d})$ denotes the space of measures (not necessarily probability measures) on $\mathbf{T}^{d}$. 

The minimizer of the last line of equation \eqref{eq:upboundlagrange}, denoted $\overline{\mu}$ can be computed explicitly: it is characterized by
\begin{equation}
\label{eq:explicitform}
    2 h (\overline{\mu} - \mu_{\infty}^{V}) = \lambda f, \ \ \ \  \int_{\mathbf{T}^{d}} f \, \mathrm d ( \mu - \mu_{\infty}^{V}) = r.
\end{equation}
We may rewrite the left equation in \eqref{eq:explicitform} as 
\begin{equation}
\label{eq:formmu*}
    \overline{\mu} = \mu_{\infty}^{V} + \frac{\lambda}{2} h^{-1}(f),
\end{equation}
while Plancherel's Theorem implies that
\begin{equation}
    \lambda = \frac{2r}{\|h_{-\frac{1}{2}}(f)\|_{L^{2}}^{2}}.
\end{equation}

Using the fact that 
\begin{equation}
    \mathcal{E}(h_{-1}(f)) = \int_{\mathbf{T}^{d}} f h_{-1}(f) = \|h_{-\frac{1}{2}}(f)\|_{L^{2}}^{2},
\end{equation}
(which can be once again verified by Plancherel's Theorem) we have that 
\begin{equation}
 \mathcal{E}_{V}(\overline{\mu}) - \mathcal{E}_{V}(\mu_{\infty}^{V}) = \frac{r}{\|h_{-\frac{1}{2}}(f)\|_{L^{2}}^{2}},    
\end{equation}
and equation \eqref{eq:upboundweak} follows. 

In order to prove equation \eqref{eq:loboundweak}, we note that, since $ \widehat{h_{-1}(\mu)} (0):= 0$, $\overline{\mu}$ given by \eqref{eq:formmu*} has integral $1$. Furthermore, if $f$ satisfies items $i)$ and $ii)$, then $\overline{\mu}$ is a probability measure supported in $\Sigma$. Therefore, proceeding as in equation \eqref{eq:upboundlagrange} and using the fact that $\zeta_{\infty}$ is $0$ in $\Sigma$, 
\begin{equation}
    \begin{split}
         \min_{ \mu \in \mathcal{P}(\mathbf{T}^{d}),  \left| \int_{\mathbf{T}^{d}} f \, \mathrm d ( \mu - \mu_{\infty}^{V}) \right| = r } \mathcal{E}_{V}(\mu) - \mathcal{E}_{V}(\mu_{\infty}^{V}) &=  \min_{ \mu \in \mathcal{M}(\mathbf{T}^{d}),  \left| \int_{\mathbf{T}^{d}} f \, \mathrm d ( \mu - \mu_{\infty}^{V}) \right| = r } \mathcal{E}_{V}(\mu) - \mathcal{E}_{V}(\mu_{\infty}^{V})\\
         &=  \mathcal{E}(\overline{\mu} - \mu_{\infty}^{V}).
    \end{split}
\end{equation}

\textbf{Step 2:} Proof of upper bounds for admissible kernels.

For the rest of the proof, we will use the following notation: given $t>0$, we denote
\begin{equation}
    {\rm Fluct}^{t}_{\infty}[f]  := \int_{\mathbf{T}^{d}} f d \left( {\rm emp}^{t}_{N} - \mu^{V}_{\infty} \right), 
\end{equation}
where $ {\rm emp}^{t}_{N}$ denotes the regularized empirical measure (see Lemma \ref{lem:regularization}).

We first prove equation \eqref{eq:conceq}. We start by noting that
\begin{equation}
\label{eq:testerror1}
    \begin{split}
         \left| {\rm Fluct}^{t}_{\infty}[f]  -  {\rm Fluct}_{\infty}[f] \right| &= \int_{\mathbf{T}^{d}} f d \left( {\rm emp}^{t}_{N} - {\rm emp}_{N} \right) \\
         &\leq  C \max \left\{ t, t^{\frac{\kappa}{\lambda}} \right\} \|f\|_{W^{\kappa}}.
    \end{split}
\end{equation}

Using Plancherel and Cauchy-Schwartz Theorems, we have that
\begin{equation}
    \begin{split}
        \int_{\mathbf{T}^{d}} f d \left( {\rm emp}^{t}_{N} - \mu^{V}_{\infty} \right) &= \sum_{m \in \mathbf{Z}^{d}} \widehat{f}(m) \left( \widehat{{\rm emp}^{t}_{N}}(m) - \widehat{\mu^{V}_{\infty}}(m) \right) \\ 
        &= \sum_{m \in \mathbf{Z}^{d} \setminus \{0\}} \widehat{g}^{-\frac{1}{2}}(m) \widehat{f}(m) \left( \widehat{{\rm emp}^{t}_{N}}(m) - \widehat{\mu^{V}_{\infty}}(m) \right)\widehat{g}^{\frac{1}{2}}(m) \\ 
        & \leq \left\| \widehat{g}^{-\frac{1}{2}} \widehat{f} \right\|_{L^{2}} \left\|  \left( \widehat{{\rm emp}^{t}_{N}} - \widehat{\mu^{V}_{\infty}}\right)\widehat{g}^{\frac{1}{2}} \right\|_{L^{2}} \\ 
         &= \left\| h_{-\frac{1}{2}}(f) \right\|_{L^{2}} \sqrt{\mathcal{E}\left({\rm emp}^{t}_{N} - \mu^{V}_{\infty}\right) }.
    \end{split}
\end{equation}
In order to pass from the first to the second line, we have used that, since ${{\rm emp}^{t}_{N}}$ and ${\mu^{V}_{\infty}}$ are probability measures, $\widehat{{\rm emp}^{t}_{N}}(0) - \widehat{\mu^{V}_{\infty}}(0) =0$. 

We then have that
\begin{equation}
\label{eq:reallylong}
    \begin{split}
        &\mathbf{P}_{N, \beta} \left( \left| {\rm Fluct}_{\infty}[f]  \right| \geq r \right)\\
        \leq &\mathbf{P}_{N, \beta} \left( \left| {\rm Fluct}^{t}_{\infty}[f]   \right| \geq r- C \|f\|_{W^{\kappa}}\max \left\{ t, t^{\frac{\kappa}{\lambda}} \right\}  \right)\\
        \leq &\mathbf{P}_{N, \beta} \left( \left\| h_{-\frac{1}{2}}(f) \right\|_{L^{2}} \sqrt{\mathcal{E}\left({\rm emp}^{t}_{N} - \mu^{V}_{\infty}\right) } \geq r - C \|f\|_{W^{\kappa}} \max \left\{ t, t^{\frac{\kappa}{\lambda}} \right\}  \right)\\
        = &\mathbf{P}_{N, \beta} \left( \sqrt{\mathcal{E}\left({\rm emp}^{t}_{N} - \mu^{V}_{\infty}\right) } \geq \frac{r- C \|f\|_{W^{\kappa}} \max \left\{ t, t^{\frac{\kappa}{\lambda}} \right\} }{\left\| h_{-\frac{1}{2}}(f) \right\|_{L^{2}}} \right)\\
        \leq &\mathbf{P}_{N, \beta} \Bigg( {\rm F}_{N}(X_{N}, \mu^{V}_{\infty}) \geq \frac{ \left(r- C \|f\|_{W^{\kappa}}\max \left\{ t, t^{\frac{\kappa}{\lambda}} \right\}  \right)^{2} }{\left\| h_{-\frac{1}{2}}(f) \right\|^{2}_{L^{2}}} - \\
        &\ \ \ \ \ \ \left(  C \left( \frac{t^{1-\frac{d}{\gamma}} }{N} +t \right)+C  \left( 1+  \|\mu^{V}_{\infty}\|_{W^{\alpha - \lambda}} \right)\max \left\{ t, t^{\frac{\alpha}{\lambda}} \right\}\right) \Bigg)\\
        \leq &\exp \Bigg( -N^{2} \beta \Bigg[ \frac{ \left(r- C \|f\|_{W^{\kappa}}\max \left\{ t, t^{\frac{\kappa}{\lambda}} \right\}  \right )^{2} }{\left\| h_{-\frac{1}{2}}(f) \right\|^{2}_{L^{2}}} - \\
        &\ \ \ \ \ \ \left(  C \left( \frac{t^{1-\frac{d}{\gamma}} }{N} +t \right)+C \left( 1+  \|\mu^{V}_{\infty}\|_{W^{\alpha - \lambda}} \right)\max \left\{ t, t^{\frac{\alpha}{\lambda}} \right\} \right)  \Bigg]  + N\left( {\rm ent}[\mu^{V}_{\infty}] - \log \left|\Sigma\right| + o_{N}(1) \right) \Bigg).
    \end{split}
\end{equation}
In equation \eqref{eq:reallylong}, we have used equation \eqref{eq:testerror1} to pass from the first to the second line, the fact that $ \left| {\rm Fluct}^{t}_{\infty}[f]   \right| \leq \left\| h_{-\frac{1}{2}}(f) \right\|_{L^{2}} \sqrt{\mathcal{E}\left({\rm emp}^{t}_{N} - \mu^{V}_{\infty}\right) } $ (which can be easily verified using Placherel Theorem and Cauchy-Schartz inequality) to pass from the second to the third line, Lemma \ref{lem:regularization} to pass from the fourth to the fifth line, and Lemma \ref{lem:firstconc} to pass from the fifth to the sixth line. 

In order to find the optimal $t$, we split into $4$ cases:
\begin{itemize}
    \item[]Case $1$: $\kappa \geq \lambda, \alpha \geq \lambda $. Then the optimal $t$ is given by $t = N^{-\frac{\gamma}{d}}$, and
    \begin{equation}
        \max \left\{ t, t^{\frac{\kappa}{\lambda}} \right\} +  C \left( \frac{t^{1-\frac{d}{\gamma}} }{N} +t \right) + \max \left\{ t, t^{\frac{\alpha}{\lambda}} \right\} \leq C N^{-\frac{\gamma}{d}}.
    \end{equation}

        \item[]Case $2$: $\kappa \leq \lambda, \alpha \geq \lambda $. Then the optimal $t$ is given by $t = N^{\left(1 - \frac{d}{\gamma} - \frac{\alpha}{\lambda}\right)^{-1}}$, and
    \begin{equation}
        \max \left\{ t, t^{\frac{\kappa}{\lambda}} \right\} +  C \left( \frac{t^{1-\frac{d}{\gamma}} }{N} +t \right) + \max\left\{ t, t^{\frac{\alpha}{\lambda}} \right\} \leq C N^{p^{*}(\alpha, \lambda, \gamma)}.
    \end{equation}

        \item[]Case $3$: $\kappa \geq \lambda, \alpha \leq \lambda $. Then the optimal $t$ is given by $t = N^{\left(1 - \frac{d}{\gamma} - \frac{\kappa}{\lambda}\right)^{-1}}$, and
    \begin{equation}
        \max \left\{ t, t^{\frac{\kappa}{\lambda}} \right\} +  C \left( \frac{t^{1-\frac{d}{\gamma}} }{N} +t \right) + \max\left\{ t, t^{\frac{\alpha}{\lambda}} \right\} \leq C N^{p^{*}(\kappa, \lambda, \gamma)}.
    \end{equation}

        \item[]Case $4$: $\kappa \leq \lambda, \alpha \leq \lambda $. Then the optimal $t$ is given by $t = N^{\left(1 - \frac{d}{\gamma} - \frac{\min\{\kappa, \alpha\}}{\lambda}\right)^{-1}}$, and we get  
    \begin{equation}
        \max \left\{ t, t^{\frac{\kappa}{\lambda}} \right\} +  C \left( \frac{t^{1-\frac{d}{\gamma}} }{N} +t \right) + \max\left\{ t, t^{\frac{\alpha}{\lambda}} \right\} \leq C N^{p^{*}(\min\{\alpha, \kappa\}, \lambda, \gamma)}.
    \end{equation}
\end{itemize}

From this, we can conclude equation \eqref{eq:conceq}. The proof of equation \eqref{eq:conctherm} is analogous. 

\textbf{Step 3:} Proof of lower bound for admissible kernels.

We start by proving equation \eqref{eq:lowbound1}. Let $\Lambda_{N}$ be as in Lemma \ref{lem:construction} with $\varphi=\mu^{V}_{\infty} + \left(r + C N^{- \alpha p }  \|f\|_{W^{\alpha}}\right) \frac{h_{-1}(f)}{\| h_{-\frac{1}{2}}(f) \|^{2}_{L^{2}}}$, where $C$ is the constant that appears in equation \eqref{eq:testerror} (note that $\varphi$ is positive for $N$ large enough in virtue of assumption $ii)$). Note that if $X_{N} \in \Lambda_{N}$ then
\begin{equation}
\label{eq:A}
    \begin{split}
        \frac{1}{N} \sum_{i=1}^{N} f(x_{i}) &\geq \int_{\mathbf{T}^{d}} f \varphi \, {\rm d} x - C N^{- \alpha p }  \|f\|_{W^{\alpha}} \\
        &\geq (r + C N^{- \alpha p }  \|f\|_{W^{\alpha}})\int_{\mathbf{T}^{d}} f  \frac{h_{-1}(f)}{\| h_{-\frac{1}{2}}(f) \|^{2}_{L^{2}}} \, {\rm d} x - C N^{- \alpha p }  \|f\|_{W^{\alpha}} +  \int_{\mathbf{T}^{d}} \mu^{V}_{\infty} f \, {\rm d} x \\
        &= r +  \int_{\mathbf{T}^{d}} \mu^{V}_{\infty} f \, {\rm d} x.
    \end{split}
\end{equation}
In equation \eqref{eq:A} we have used that $\int_{\mathbf{T}^{d}} f h_{-1}(f) \, {\rm d} x = \| h_{-\frac{1}{2}}(f) \|^{2}_{L^{2}}$.

Therefore for any $p > \frac{1}{d}$,
\begin{equation}
\label{eq:B}
    \begin{split}
         &\mathbf{P}_{N, \beta} \left( \left| {\rm Fluct}_{\infty}[f]  \right| \geq r \right) \\
         \geq &\mathbf{P}_{N, \beta} \left( X_{N} \in {\Lambda}_{N} \right)\\
         \geq & \frac{1}{K_{N, \beta}^{\infty}} \int_{\Lambda_{N}}  \exp \left( - N^{2} \beta \left[ {\rm F}_{N}(X_{N}, \mu^{V}_{\infty}) \right]  \right) \Pi_{i=1}^{N} \rho_{N}(x_{i}) \, \mathrm d X_{N}\\
         \geq & \exp \left( -N^{2}\beta \left[ \mathcal{E}\left( \varphi - \mu^{V}_{\infty} \right) - N^{p(s-d)} +C\left(1+\|\mu^{V}_{\infty}\|_{W^{\alpha - \gamma}} \right)N^{p^{*}}  \right]- o(N) \right)  p_{N, \beta}^{\otimes N} \left( \Lambda_{N} \right)\\
          \geq & \exp \Bigg( -N^{2}\beta \Bigg[ \left(r + C N^{- \alpha p }  \|f\|_{W^{\alpha}}\right)^{2} \| h_{-\frac{1}{2}}(f) \|_{L^{2}}^{-2} \\
          &\ \ \ \ \ \ \ \ +C\left(1+\|\mu^{V}_{\infty}\|_{W^{\alpha - \gamma}} \right)N^{\max \{p^{*}, p(s-d) \}}  \Bigg]- o(N) \Bigg)  p_{N, \beta}^{\otimes N} \left( \Lambda_{N} \right)\\
          \geq & \exp \Bigg( -N^{2}\beta \left[ \left(r + C N^{- \alpha p }  \|f\|_{W^{\alpha}}\right)^{2} \| h_{-\frac{1}{2}}(f) \|_{L^{2}}^{-2} +C\left(1+\|\mu^{V}_{\infty}\|_{W^{\alpha - \gamma}} \right)N^{\max \{p^{*}, p(s-d) \}} \right] \\
          &\ \ \ \ \ \ \ \ - N \left( \mathrm{ent}\left[\mu^{V}_{\infty} + r \frac{h_{-1}(f)}{\| h_{-\frac{1}{2}}(f) \|^{2}_{L^{2}}} \Bigg| \frac{1}{|\Sigma|}\mathbf{1}_{\Sigma}\right] + o(1) \right) \Bigg).
    \end{split}
\end{equation}
In equation \eqref{eq:B} we have used that $\mathcal{E}(h_{-1}(f)) =  \| h_{-\frac{1}{2}}(f) \|_{L^{2}}^{2}$. From this, we can conclude equation \eqref{eq:lowbound1}. The proof of equation \eqref{eq:lowbound2} is analogous and hence omitted.  
\end{proof}

\section{Proof of asymptotes of the Laplace transform}
\label{sec:laplace}

In this section, we show the proof of Theorem \ref{teo:laplace}. 

\begin{proof}

\textbf{Step 1:} Proof of asymptotes for weakly admissible kernels.

By \cite{garcia2019large}, Theorem 1.2, we have that 
\begin{equation}
        \lim_{N \to \infty} \frac{1}{N^{2} \beta } \log \left( \mathbb{E}_{\mathbf{P}_{N, \beta}} \left( \exp \left( N^{2} \beta r {\rm Fluct}_{\theta}[f]\right) \right) \right) = \inf_{\mu \in \mathcal{P}(\mathbf{T}^{d})} \mathcal{E}_{V}(\mu) + r \int_{\mathbf{T}^{d}} f \, \mathrm d (\mu - \mu_{\infty}^{V}) - \mathcal{E}_{V}(\mu_{\infty}^{V}).
    \end{equation}
The rest of the proof is analogous to step 1 of the proof of Theorems \ref{teo:concup} and \ref{teo:conclow}, with the additional easiness that in this case, we do not deal with a Lagrange multiplier.     

\textbf{Step 2:} Proof of item a).

This proof is inspired by the author's previous work \cite{garcia2022generalized}. We begin by applying Theorem \ref{teo:concup} and writing
\begin{equation}
    \begin{split}
        &\mathbb{E}_{\mathbf{P}_{N, \beta}} \left( \exp \left( N^{2} \beta r {\rm Fluct}_{\infty}[f]\right) \right)\\
        =& \int_{0}^{\infty}  {\mathbf{P}_{N, \beta}} \left( \exp \left( N^{2} \beta r {\rm Fluct}_{\infty}[f]\right) \geq x  \right) \, {\rm d} x\\
        =& \int_{0}^{\infty}  {\mathbf{P}_{N, \beta}} \left(    {\rm Fluct}_{\infty}[f] \geq \frac{\log x}{rN^{2} \beta}  \right) \, {\rm d} x\\
         \leq & \exp \left( N\left( {\rm ent}[\mu^{V}_{\infty}] - \log \left|\Sigma\right| + o_{N}(1) \right) - C N^{2 + q^{*}} \beta   \left( 1+ \|\mu^{V}_{\infty}\|_{W^{\alpha - \lambda}} \right) \right) \\
         &\ \ \ \ \ \ \int_{0}^{\infty}  \exp \left( -N^{2} \beta \left[ \frac{ \left(\frac{\log x}{r N^{2} \beta} - C N^{q^{*}}\|f\|_{W^{\kappa}} \right)^{2} }{\left\| h_{-\frac{1}{2}}(f) \right\|^{2}_{L^{2}}} \right] \right)\, {\rm d} x.
    \end{split}
\end{equation}

Recall that $ q^{*}:= p^{*}(\min\{\alpha, \kappa\}, \lambda, \gamma)$. Performing the change of variables $y = \log x $, we can transform the integral into 
\begin{equation}
\begin{split}
        &\mathbb{E}_{\mathbf{P}_{N, \beta}} \left( \exp \left( N^{2} \beta r {\rm Fluct}_{\infty}[f]\right) \right) \\
        &\leq \exp \left( N\left( {\rm ent}[\mu^{V}_{\infty}] - \log \left|\Sigma\right| + o_{N}(1) \right) - C N^{2 + q^{*}} \beta  \left( 1+   \|\mu^{V}_{\infty}\|_{W^{\alpha - \lambda}} \right) \right)  \\
        &\ \ \ \int_{0}^{\infty}  \exp \left( -N^{2} \beta \left[ \frac{ \left(\frac{y}{r N^{2} \beta} - C N^{q^{*}}\|f\|_{W^{\kappa}} \right)^{2} }{ \left\| h_{-\frac{1}{2}}(f) \right\|^{2}_{L^{2}}}  \right] - y \right)\, {\rm d} y.
\end{split}        
\end{equation}

This last integral is the density of a Gaussian random variable and can be computed explicitly using standard techniques. The result is 
\begin{equation}
\begin{split}
     &\int_{0}^{\infty}  \exp \left( -N^{2} \beta  \left[ \frac{ \left(\frac{y}{r N^{2} \beta} -  C N^{q^{*}}\|f\|_{W^{\kappa}} \right)^{2} }{\left\| h_{-\frac{1}{2}}(f) \right\|^{2}_{L^{2}}}  \right] - y \right)\, {\rm d} y \\
    =& \left( \frac{N^{2} \beta r^{2} \left\| h_{-\frac{1}{2}}(f) \right\|^{2}_{L^{2}}}{2} \right)^{\frac{d}{2}} \exp \left( N^{2} \beta \left[\frac{ r^{2} \left\| h_{-\frac{1}{2}}(f) \right\|^{2}_{L^{2}}}{4} -  C N^{q^{*}}\|f\|_{W^{\kappa}} r \right] \right) .
\end{split}    
\end{equation}

From this we can conclude. 

\textbf{Step 2:} Proof of item b).

First, note that
\begin{equation}
    \begin{split}
        &\mathbb{E}_{\mathbf{P}_{N, \beta}} \left( \exp \left( N^{2} \beta r {\rm Fluct}_{\infty}[f]\right) \right)\\
        =& \frac{1}{Z_{N, \beta}^{V}} \int_{\mathbf{T}^{d \times N}} \exp \left( N^{2} \beta r \int_{\mathbf{T}^{d}} f \, {\rm d} ({\rm emp}_{N} - \mu^{V}_{\infty}) \right) \exp \left( \beta \mathcal{H}_{N} X_{N} \right) \, {\rm d} X_{N}\\
        =& \frac{Z_{N, \beta}^{V- rf}}{Z_{N, \beta}^{V}} \exp \left( -N^{2} \beta r \int_{\mathbf{T}^{d}} f\mu^{V}_{\infty} \, {\rm d} x  \right).
    \end{split}
\end{equation}

Since $ \widehat{h_{-1}(f)} (0):= 0,$ we have that $\mu_{\infty}^{V} + r\frac{h_{-1}(f)}{2}$ has integral $1$. Furthermore, using the hypotheses of item b), we have that $\mu_{\infty}^{V} + r\frac{h_{-1}(f)}{2}$ is a probability measure that satisfies the First Order Condition, equation \eqref{ELeq}, and therefore $\mu_{\infty}^{V - r f} = \mu_{\infty}^{V} + r\frac{h_{-1}(f)}{2}$.

Using Lemma \ref{lem:partfunc}, we have that
\begin{equation}
\begin{split}
     \frac{Z_{N, \beta}^{V- r f}}{Z_{N, \beta}^{V}} \geq &\exp\Bigg(-N^{2} \beta \Bigg[ \mathcal{E}_{V -r f} (\mu_{\infty}^{V - r f}) - \mathcal{E}_{V} (\mu_{\infty}^{V}) \\
     & \ \ \ \ \ -C\|\mu^{V}_{\infty}\|_{W^{\alpha - \gamma}} N^{p^{*}}  - \frac{1}{\theta} \left( {\rm ent}[\mu^{V}_{\infty}] - \log |\Sigma| + o_{N}(1)  \right) \Bigg] \Bigg).
\end{split}     
\end{equation}

Using the splitting formula (Proposition \ref{Prop:splitting}), we have that 
\begin{equation}
    \begin{split}
         \mathcal{E}_{V - r f} (\mu_{\infty}^{V - r f}) &= \mathcal{E}_{V}(\mu_{\infty}^{V}) + \mathcal{E}\left( r\frac{h_{-1}(f)}{2} \right) -  r \int_{\mathbf{T}^{d}} f\mu^{V}_{\infty} \, {\rm d} x  - \frac{r^{2}}{2} \int_{\mathbf{T}^{d}} f h_{-1}(f)\, {\rm d} x \\
         &= \mathcal{E}_{V}(\mu_{\infty}^{V}) - \frac{r^{2}}{4} \| h_{-\frac{1}{2}}(f) \|_{L^{2}} - r \int_{\mathbf{T}^{d}} f\mu^{V}_{\infty} \, {\rm d} x.
    \end{split}
\end{equation}

From this, we can conclude. 

\end{proof}

\begin{remark}
    In principle, it is possible to apply the approach of step 2 (i.e. Expressing the Laplace transform in terms of partition functions, and using Lemma \ref{lem:partfunc} to estimate the partition functions) to obtain an upper bound on the Laplace transform as well. However, even if $f$ satisfies hypotheses $i)$ and $ii)$, the resulting bound is
     \begin{equation}
    \begin{split}
   \log \left(  \mathbb{E}_{\mathbf{P}_{N, \beta}} \left( \exp \left( N^{2} \beta r {\rm Fluct}_{\infty}[f]\right) \right) \right) \leq   N^{2} \beta \Bigg[ \frac{r^{2}}{4} \| h_{-\frac{1}{2}}(f) \|_{L^{2}}^{2} \\
    +C\|\mu^{V-rf}_{\infty}\|_{W^{\alpha - \gamma}} N^{p^{*}}  + \frac{1}{\theta} \left( {\rm ent}[\mu^{V-rf}_{\infty}] - \log |\Sigma| + o_{N}(1)  \right)  \Bigg], 
   \end{split}
    \end{equation}
    yielding an error that is no better than the one obtained in Theorem \ref{teo:laplace}. The general case (if $f$ does not satisfy hypotheses $i)$ and $ii)$) is more complicated, since it is not possible to easily characterize $\mu^{V-rf}_{\infty}$. 
\end{remark}

\section{Appendix A: Regularization procedure}
\label{sect:regularization}

In this section, we introduce a regularization procedure for general kernels. The idea to regularize is to look at a kernel-dependent parabolic equation with the empirical measure as its initial condition and solve for a small time. This is inspired by the approach of \cite{garcia2019concentration}, which applies an analysis of the heat equation in a general compact manifold to the study of one-component plasma. 

One of the advantages of this approach is that we can omit a restrictive hypothesis on the interaction $g$, namely the existence of a higher-dimensional superharmonic extension, which was present in \cite{rosenzweig2023global, nguyen2022mean, garcia2022generalized}. This approach also bypasses the need to work in higher dimensions.

We now prove Lemma \ref{lem:regularization}, restated here:

\begin{lemma}
Let $N>1$, $X_{N} \in \mathbf{T}^{d \times N}$, $g: \mathbf{T}^{d}$ be admissible, and $\alpha>0$. Then for all $t>0$ there exists ${\rm emp}_{N}^{t} \in \mathcal{P}(\mathbf{T}^{d})$ such that
\begin{equation}
        \mathcal{E}\left( {\rm emp}_{N}^{t} \right) \leq  \mathcal{E}^{\neq}\left( {\rm emp}_{N} \right) + C \left( \frac{t^{1-\frac{d}{\gamma}} }{N} +t \right),
\end{equation}
\begin{equation}
       \left| \int_{\mathbf{T}^{d}} f \, \mathrm{d} \left( {\rm emp}_{N}^{t} - {\rm emp}_{N} \right) \right| \leq C \max \left\{ t, t^{\frac{\alpha}{\lambda}} \right\} \|f\|_{W^{\alpha}},
\end{equation}
for some $C>0$ depending only on $g$. We call ${\rm emp}_{N}^{t} $ the {regularized empirical measure}. 
\end{lemma}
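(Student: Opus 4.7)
The plan is to regularize ${\rm emp}_N$ by flowing it under the parabolic semigroup whose generator is $L := h_{-1}$, i.e.\ the operator with Fourier symbol $\hat g(m)^{-1}$ for $m \neq 0$. Let $p(\cdot, t)$ denote the associated fundamental solution, so that $\widehat{p(\cdot, t)}(m) = e^{-t/\hat g(m)}$ for $m \neq 0$ and $\widehat{p(\cdot,t)}(0) = 1$; this is precisely the kernel $p(z, t)$ referenced in Assumption 4 of Definition \ref{def:admissible}. I would first set $\widetilde{\rm emp}_N^t := p(\cdot, t) * {\rm emp}_N$, and then produce ${\rm emp}_N^t \in \mathcal{P}(\mathbf{T}^d)$ by a positivity correction (truncating the negative part and renormalizing total mass). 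This correction is controllable because, by Assumption 4 together with the pointwise decay $p(z,t) \to 0$ for $z \neq 0$, dominated convergence yields $\|(p(\cdot,t))_-\|_{L^1} \to 0$ as $t \to 0$, so the perturbation introduced falls below the order of the error terms.

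For the energy bound, Plancherel together with the semigroup identity $p(\cdot,t) * p(\cdot,t) = p(\cdot, 2t)$ gives $\mathcal{E}(\widetilde{\rm emp}_N^t) = N^{-2} \sum_{i,j} G_t(x_i - x_j)$ with $\widehat{G_t}(m) = \hat g(m) e^{-2t/\hat g(m)}$. Isolating the $i=j$ contribution yields
\[
\mathcal{E}(\widetilde{\rm emp}_N^t) \leq \frac{G_t(0)}{N} + \mathcal{E}^{\neq}({\rm emp}_N) + \sup_{z}\, (G_t - g)(z).
\]
The diagonal term is controlled via Assumption 2 ($\hat g(m) \leq C|m|^{-\gamma}$) and the rescaling $u = t^{1/\gamma} m$, producing $G_t(0) \leq C t^{1 - d/\gamma}$. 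For the off-diagonal supremum, the identity $Lg = \delta_0 - 1/|\mathbf{T}^d|$ implies $\partial_t G_t = -2 p(\cdot,2t) + 2/|\mathbf{T}^d|$, whose integration in $t$ combined with Assumption 4 yields $G_t(z) - g(z) \leq Ct$ uniformly in $z$. Summing these contributions produces the stated bound $C(t^{1-d/\gamma}/N + t)$.

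For the test-function bound, Plancherel gives
\[
\int f\, d(\widetilde{\rm emp}_N^t - {\rm emp}_N) = \sum_{m \neq 0} \hat f(m)\, \overline{\widehat{{\rm emp}_N}(m)}\,\bigl(e^{-t/\hat g(m)} - 1\bigr).
\]
Using $|e^{-x} - 1| \leq \min\{1, x\}$, Assumption 3 ($\hat g(m)^{-1} \leq c^{-1}|m|^\lambda$), and $|\widehat{{\rm emp}_N}(m)| \leq 1$, each summand is at most $|\hat f(m)|\min\{1, Ct|m|^\lambda\}$. I would split the sum at the threshold $|m|^\lambda \sim t^{-1}$: on the low-frequency piece, trade $|m|^\lambda$ against $|m|^\alpha$ via $|m|^\lambda \leq t^{(\alpha-\lambda)/\lambda}|m|^\alpha$; on the high-frequency piece, use $|m|^{-\alpha} \leq t^{\alpha/\lambda}$ directly. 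Both regimes give $\leq C\max\{t, t^{\alpha/\lambda}\}\|f\|_{W^\alpha}$. The hardest part of the argument is the positivity correction, since Assumption 4 provides only a uniform, and not a vanishing, lower bound on $p(\cdot,t)$; the quantitative statement $\|(p(\cdot,t))_-\|_{L^1} \to 0$ has to be extracted from it by hand, and then one must verify that the resulting error is dominated by the two principal estimates above.
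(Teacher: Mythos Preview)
Your proposal matches the paper's proof in all essential respects: the regularization is taken to be the semigroup smoothing $p(\cdot,t)\ast {\rm emp}_N$ associated with $L=h_{-1}$; the diagonal term $g_{2t}(0)$ is bounded by $Ct^{1-d/\gamma}$ using Assumption~2 and a rescaling in the Fourier sum; the off-diagonal increment $g_{2t}(z)-g(z)$ is bounded by $Ct$ via the identity $g(z)-g_{2t}(z)=\int_0^{2t}\bigl(p(z,s)-1\bigr)\,\mathrm d s$ together with Assumption~4; and the test-function error is handled by splitting the Fourier sum at $|m|^\lambda\sim t^{-1}$ and using Assumption~3, exactly as you describe.

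The one genuine divergence is your positivity correction, and here your instinct that it is ``the hardest part'' is telling: the paper simply does not perform it. The paper takes ${\rm emp}_N^t = p(\cdot,t)\ast {\rm emp}_N$ with no truncation or renormalization, so the assertion ${\rm emp}_N^t\in\mathcal{P}(\mathbf{T}^d)$ in the lemma is a slight overstatement. This is harmless for the paper's purposes, because every downstream use of ${\rm emp}_N^t$ requires only that it have total mass~$1$ (guaranteed by $\widehat{p(\cdot,t)}(0)=1$) and that $\mathcal{E}({\rm emp}_N^t-\mu)\ge 0$ (guaranteed by positive definiteness of $g$ on \emph{signed} measures of bounded variation). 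Your proposed justification for the correction is also not quite right: as $t\to 0^+$ one has $p(\cdot,t)\to\delta_0$ in distribution, not $p(z,t)\to 0$ pointwise for $z\neq 0$, and Assumption~4 by itself gives no quantitative decay of $\|(p(\cdot,t))_-\|_{L^1}$ of the order needed. So the cleanest resolution is the paper's: drop the positivity requirement and work with ${\rm emp}_N^t$ as a mass-one signed measure.
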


\begin{remark}
    If $g$ is a Riesz kernel of order $s$, then by taking $t = N^{-\frac{2s}{d}}$, we obtain the estimate
    \begin{equation}
        \mathcal{E}\left( {\rm emp}_{N}^{t} \right) \leq \mathcal{E}^{\neq}\left( {\rm emp}_{N} \right) + C N^{-\frac{2s}{d}},
\end{equation}
which is optimal in terms of order of magnitude. 
\end{remark}

\begin{proof}
    \textbf{Step 1:} Definition.

    We introduce the notation $L= h_{-1}$. We will be interested in the following parabolic equation:
    \begin{equation}
    \label{eq:heatflow}
        \begin{split}
            \partial_{t}u + L(u) &=0\\
            u(\cdot, 0)&= u_{0}. 
        \end{split}
    \end{equation}

    Note that $u$ can be expressed in terms of the Fourier coefficients of $g$ and $u_{0}$: 
    \begin{equation}
    \label{eq:solution}
        u (x, t) = \widehat{u_{0}}(0) + \sum_{m \in \mathbf{Z}^{d} \setminus \{0\}} \exp \left( - \widehat{g}^{-1}(m) t \right) \widehat{u_{0}}(m) \exp\left( \frac{2 \pi i}{T} m \cdot x\right),
    \end{equation}
    since, for $m \neq 0$, 
    \begin{equation}
        L \left(\exp\left( \frac{2 \pi i}{T} m \cdot x\right) \right) = \widehat{g}^{-1}(m) \exp\left( \frac{2 \pi i}{T} m \cdot x\right).
    \end{equation}
    In other words, the solution can be written as 
    \begin{equation}
        u(\cdot, t) = u_{0} \ast p(\cdot, t),
    \end{equation}
    where
    \begin{equation}
    \label{eq:kernelp}
        p(x,t) := 1 + \sum_{m \in \mathbf{Z}^{d} \setminus \{0\}} \exp \left( - \widehat{g}^{-1}(m) t \right) \exp\left( \frac{2 \pi i}{T} m \cdot x\right).
    \end{equation}
    Note that $u$ given by equation \eqref{eq:solution} is smooth for all $t>0$ if $u_{0}$ is a measure of bounded variation, and hence $u$ solves equation \eqref{eq:heatflow} in the classical sense. 
    
    The regularization ${\rm emp}_{N}^{t}$ is now defined as the solution to equation \eqref{eq:heatflow} with initial condition $u_{0} = {\rm emp}_{N}$. 

    \textbf{Step 2:} Energy estimate.

    We proceed to estimate the energy of ${\rm emp}_{N}^{t}$. We begin with the elementary calculation
    \begin{equation}
        \mathcal{E}\left( {\rm emp}_{N}^{t} \right) -  \mathcal{E}^{\neq}\left( {\rm emp}_{N} \right) = \frac{1}{N}\mathcal{E}\left( \delta_{x}^{t} \right) + \frac{1}{N^{2}} \sum_{ i \neq j} g(x_{i} - x_{j}) - \mathcal{G}\left( \delta_{x}^{t}, \delta_{y}^{t} \right),
    \end{equation}
    where $\delta_{x}^{t}, \delta_{y}^{t}$ are, respectively, the solutions to equation \eqref{eq:heatflow} with initial condition $u_{0} = \delta_{x}$ or $u_{0} = \delta_{y}$. Hence, the rest of the proof will be about estimating $\mathcal{G}\left( \delta_{x}^{t}, \delta_{y}^{t} \right)$ and $\mathcal{E}\left( \delta_{x}^{t} \right)$. First, note that using the symmetry of the kernel $g$, we have that $\mathcal{G}\left( \delta_{x}^{t}, \delta_{y}^{t} \right) = g_{2t}(x-y)$, where $g_{t}$ is the solution to equation \eqref{eq:heatflow} with initial condition $u_{0} = g$. In particular, $\mathcal{E}\left( \delta_{x}^{t} \right) = g_{2t}(0)$. 
    
    \textit{Substep 2.1:} Starting point.

    We will first find a suitable expression for $g_{2t}(z)$, for arbitrary $z \in \mathbf{T}^{d}$. We begin by writing
    \begin{equation}
    \label{eq:integralform}
        \begin{split}
            g_{2t}(z) &= \widehat{g}(0) + \sum_{m \in \mathbf{Z}^{d} \setminus \{0\}} \exp \left( - 2 \widehat{g}^{-1}(m) t \right) \widehat{g}(m) \exp\left( \frac{2 \pi i}{T} m \cdot z \right)\\
            &=\widehat{g}(0) + \sum_{m \in \mathbf{Z}^{d}\setminus \{0\}} \int_{2t}^{\infty} \exp \left(- \widehat{g}^{-1}(m) s \right) \exp\left( \frac{2 \pi i}{T} m \cdot z \right) \, \mathrm d s\\
            &=\widehat{g}(0) + \int_{2t}^{\infty} \sum_{m \in \mathbf{Z}^{d}\setminus \{0\}}  \exp \left(- \widehat{g}^{-1}(m) s \right) \exp\left( \frac{2 \pi i}{T} m \cdot z \right) \, \mathrm d s\\
            &=\widehat{g}(0) + \int_{2t}^{\infty} p(z, s) -1 \, \mathrm d s.
        \end{split}
    \end{equation}

    We will now use equation \eqref{eq:integralform} to derive estimates for $g_{2t}(0)$ and $g(z)-g_{2t}(z)$ for arbitrary $z$. 
    
    \textit{Substep 2.2:} Estimate for $g_{2t}(0)$.  

    We begin by writing
    \begin{equation}
        p(0,s) = 1+  \sum_{m \in \mathbf{Z}^{d}\setminus \{0\}}  \exp \left( -\widehat{g}^{-1}(m) s \right).
    \end{equation}
    Using assumption 2 in Definition \ref{def:admissible}, we have that, for some $C>0$ and $s$ small enough,
    \begin{equation}
        \begin{split}
            p(0,s) &\leq 1+ \sum_{m \in \mathbf{Z}^{d}}  \exp \left( -C |m|^{\gamma} s \right)\\
            &\leq 1+ C \int_{\mathbf{R}^{d}}  \exp \left( -C |x|^{\gamma} s \right) \, \mathrm d x\\
            &\leq 1+ C s^{-\frac{d}{\gamma}} \int_{\mathbf{R}^{d}}  \exp \left( -C |y|^{\gamma} \right) \, \mathrm d y\\
            &\leq C s^{-\frac{d}{\gamma}},
        \end{split}
    \end{equation}

    since $\exp\left( -C |y|^{\gamma} \right)$ is an integrable function. We then have that, for $t$ small enough,
    \begin{equation}
        \begin{split}
            g_{2t}(0) &= \widehat{g}(0) + \int_{2t}^{1} p(0,s) -1 \, \mathrm d s + \int_{1}^{\infty} p(0,s) -1 \, \mathrm d s\\
            &= \widehat{g}(0) +\int_{2t}^{1} p(0,s) -1 \, \mathrm d s + g_{1}(0)\\
            &\leq C t^{-\frac{d}{\gamma} + 1}.
        \end{split}
    \end{equation}
    
    \textit{Substep 2.3:} Estimate for $g(z) - g_{2t}(z)$ and conclusion of step 2. 

    Using equation \eqref{eq:integralform}, we have that
    \begin{equation}
        g(z) - g_{2t}(z) = \int_{0}^{2t} p(z, s) - 1 \, \mathrm d s.
    \end{equation}

    Using assumption 4 of Definition \ref{def:admissible}, we have that, for small enough $t$,
    \begin{equation}
        g(z) - g_{2t}(z) \geq Ct .
    \end{equation}

    Putting everything together, we have the estimate
    \begin{equation}
        \begin{split}
            \mathcal{E}\left( {\rm emp}_{N}^{t} \right) - \mathcal{E}^{\neq}\left( {\rm emp}_{N} \right) &= \frac{1}{N}\mathcal{E}\left( \delta_{0}^{t} \right) + \frac{1}{N^{2}} \sum_{ i \neq j} g(x_{i} - x_{j}) - g_{2t}(x_{i} - x_{j}) \\
            &\leq C \left( \frac{t^{-\frac{d}{\gamma}+1}}{N} + t \right).
        \end{split}
    \end{equation}

    \textbf{Step 3:} Test error.

    Once again, using the symmetry of the kernel $g$, we have
    \begin{equation}
    \label{eq:intdisc}
        \begin{split}
            \left| \int_{\mathbf{T}^{d}} f \, \mathrm d \left( {\rm emp}_{N} - {\rm emp}_{N}^{t}\right) \right| &= \left| \int_{\mathbf{T}^{d}} f - f_{t} \, \mathrm d  {\rm emp}_{N} \right|\\
            &\leq \| f - f_{t} \|_{L^{\infty}}\\
            &\leq \sum_{m \in \mathbf{Z}^{d} \setminus \{0\}} \left| \left( 1 - \exp \left(-\widehat{g}^{-1}(m) t \right) \right) \widehat{f}(m) \right|,
        \end{split}
    \end{equation}
    where $f_{t}$ denotes the solution to equation \eqref{eq:heatflow} with $u_{0} = f$. 
    The rest of this step will consist in estimating the last line of equation \eqref{eq:intdisc}. The idea will be to use the bound 
    \begin{equation}
        \left| 1 - \exp \left(-\widehat{g}^{-1}(m) t \right)\right| \leq C \widehat{g}^{-1}(m) t
    \end{equation}
    for $t$ small, i.e. Smaller than $M_{*}$ and the bound 
    \begin{equation}
       \left| 1 - \exp \left(-\widehat{g}^{-1}(m) t \right)\right| \leq 1   
    \end{equation}
    for $t$ large, i.e. Larger than $M_{*}$, and then optimize over $M_{*}$. With this in mind, we let $M_{*}>0$ to be determined later and using assumption 3 of Definition \ref{def:admissible} we have
     \begin{equation}
        \begin{split}
            \left| \int_{\mathbf{T}^{d}} f \, \mathrm d \left( {\rm emp}_{N} - {\rm emp}_{N}^{t}\right) \right| &\leq C \left( \sum_{0<|m| \leq M_{*}} \left| \widehat{g}^{-1}(m) t \widehat{f}(m)\right| + \sum_{|m| \geq M_{*}} \left| \widehat{f}(m)\right| \right)\\
            &\leq  C \left( \sum_{0<|m| \leq M_{*}} \left| |m|^{\lambda - \alpha} t |m|^{\alpha} \widehat{f}(m)\right| + \sum_{|m| \geq M_{*}} \left| \widehat{f}(m)\right| \right).
        \end{split}
    \end{equation}

    Note that 
    \begin{equation}
        \begin{split}
            \sum_{|m| \geq M_{*}} \left| \widehat{f}(m)\right| &\leq M_{*}^{-\alpha}\sum_{|m| \geq M_{*}} \left| |m|^{\alpha} \widehat{f}(m)\right| \\
            &\leq M_{*}^{-\alpha} \| f \|_{W^{\alpha}}.
        \end{split}
    \end{equation}

    We split the remainder of the proof into two cases: $\lambda - \alpha \geq 0$ and $\lambda - \alpha \leq 0$.

    \begin{itemize}
        \item[]Case 1: $\lambda - \alpha \geq 0$.

        In this case, we have that 
        \begin{equation}
            \sum_{0<|m| \leq M_{*}} \left| |m|^{\lambda - \alpha} t |m|^{\alpha} \widehat{f}(m)\right| \leq M_{*}^{\lambda - \alpha} t \|f\|_{W^{\alpha}},
        \end{equation}
        and consequently,
        \begin{equation}
            \left| \int_{\mathbf{T}^{d}} f \, \mathrm d \left( {\rm emp}_{N} - {\rm emp}_{N}^{t}\right) \right| \leq C \|f\|_{W^{\alpha}} \left(  M_{*}^{\lambda - \alpha} t + M_{*}^{-\alpha} \right).
        \end{equation}
        Optimizing $M_{*}$ yields $M_{*}^{\lambda - \alpha} t = M_{*}^{-\alpha}$, i.e. $M_{*} = t^{-\frac{1}{\lambda}}$, and the bound 
        \begin{equation}
            \left| \int_{\mathbf{T}^{d}} f \, \mathrm d \left( {\rm emp}_{N} - {\rm emp}_{N}^{t}\right) \right| \leq C \|f\|_{W^{\alpha}} t^{\frac{\alpha}{\lambda}}.
        \end{equation}    

        \item[]Case 2: $\lambda - \alpha \leq 0$.

        In this case, we have that 
        \begin{equation}
            \sum_{0<|m| \leq M_{*}} \left| |m|^{\lambda - \alpha} t |m|^{\alpha} \widehat{f}(m)\right| \leq t \|f\|_{W^{\alpha}},
        \end{equation}
        and consequently,
        \begin{equation}
            \left| \int_{\mathbf{T}^{d}} f \, \mathrm d \left( {\rm emp}_{N} - {\rm emp}_{N}^{t}\right) \right| \leq C \|f\|_{W^{\alpha}} \left( t + M_{*}^{-\alpha} \right).
        \end{equation}
        Optimizing $M_{*}$ yields $M_{*} = t^{-\frac{1}{\alpha}}$, and the bound 
        \begin{equation}
            \left| \int_{\mathbf{T}^{d}} f \, \mathrm d \left( {\rm emp}_{N} - {\rm emp}_{N}^{t}\right) \right| \leq C \|f\|_{W^{\alpha}} t.
        \end{equation}
    \end{itemize}

    From this, we can conclude. 
\end{proof}

\begin{proposition}[Sufficient conditions for assumption 4]
    \label{prop:sufficient}
    Assumption 4 of Definition \ref{def:admissible} holds if $g$ is the periodic Coulomb kernel. Assumption 4 also holds if $L$ satisfies a comparison principle, i.e. if $L$ satisfies that, for any $f$, if $x$ is a minimum of $u$, then $L(u)(x) \geq 0$; furthermore if $x$ is a minimum of $u$, and $L(u)(x) = 0$, then $u$ is constant.
\end{proposition}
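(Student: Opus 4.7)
The plan is to prove in both cases the stronger statement that the kernel $p(\cdot,t)$ is non-negative for all small $t > 0$, which immediately yields $p(z,s) \geq 0 \geq -C$. For the periodic Coulomb kernel one has $\widehat{g}(m) = c_d |m|^{-2}$ for $m \neq 0$, so the Fourier multiplier of $L = h_{-1}$ equals $|m|^2/c_d$ on nonzero modes and vanishes on the zero mode; in other words $L$ is a positive multiple of $-\Delta$ (which also annihilates constants). The parabolic flow \eqref{eq:heatflow} then reduces, up to a rescaling of time, to the standard heat equation on $\mathbf{T}^d$, so $p(\cdot,t)$ is the classical periodic heat kernel, known to be smooth and strictly positive for every $t > 0$.

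\textbf{Comparison principle case.} For the second case the plan is to run the classical parabolic minimum-principle argument. The preliminary observation is $L(1) = 0$, which follows at once from the Fourier definition of $h_{-1}$ (both $\widehat{1}(m) = 0$ for $m \neq 0$ and $\widehat{h_{-1}(\cdot)}(0) \equiv 0$ by convention). Fix any non-negative $\phi \in C^{\infty}(\mathbf{T}^d)$ and set $v(x,t) := (p(\cdot,t) \ast \phi)(x)$; this is smooth in $x$ for $t > 0$, solves $\partial_{t} v + L v = 0$, and has initial datum $\phi$. Consider the perturbation $\tilde v := v + \epsilon t$ with $\epsilon > 0$: by linearity and $L(1) = 0$, we have $\partial_{t} \tilde v + L \tilde v = \epsilon > 0$. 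The function $\tilde v$ attains its minimum on the compact cylinder $\mathbf{T}^d \times [0, \tau]$ at some point $(x_{0}, t_{0})$; if $t_{0} > 0$, then $\partial_{t} \tilde v(x_{0}, t_{0}) \leq 0$, and since $x_{0}$ is a spatial minimum of $\tilde v(\cdot, t_{0})$, the comparison principle assigns $L \tilde v(x_{0}, t_{0})$ the sign that makes $(\partial_{t} + L) \tilde v(x_{0}, t_{0}) \leq 0$, contradicting the identity $= \epsilon > 0$. Hence $t_{0} = 0$ and $\tilde v \geq \min_{x} \phi(x) \geq 0$; letting $\epsilon \to 0$ gives $v \geq 0$. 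Since $\phi$ was arbitrary non-negative and smooth, and $p(\cdot,t)$ is itself smooth for $t > 0$, we conclude $p(\cdot,t) \geq 0$ pointwise, verifying assumption 4 with $C = 0$.

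\textbf{Main obstacle.} The essential technical point is the correct application of the comparison principle at the interior parabolic minimum, so that the $\epsilon t$ perturbation produces a strict contradiction. Once $L(1) = 0$ is established and the principle is applied in the direction appropriate to the flow $\partial_{t} u + L u = 0$ (in the case $L = -\Delta$, this corresponds to $\Delta u \geq 0$ at a spatial minimum of $u$), the argument is standard. The strong half of the comparison principle (that $L u(x) = 0$ at a minimum forces $u$ to be constant) is not required for this weak positivity proof, and would be needed only to upgrade the conclusion to strict positivity of $p$.
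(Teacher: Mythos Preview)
Your Coulomb case is correct and matches the paper's one-line observation that $p$ is then (a time-rescaled) periodic heat kernel.

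For the comparison-principle case your route differs from the paper's. The paper does not use the $\epsilon t$ perturbation at all; instead it argues that if the spacetime minimum of $u$ over $\mathbf{T}^d \times [0,\infty)$ occurs at some $(x^*, t^*)$ with $t^*>0$ (ruling out $t^*=\infty$ via $u \to \fint u_0$), then $\partial_t u(x^*,t^*)=0$, whence $L u(x^*,t^*)=0$ from the PDE, whence $u(\cdot,t^*)$ is constant by the \emph{strong} half of the comparison principle, whence $u_0$ is constant by invertibility of the flow map $u_0\mapsto p(\cdot,t^*)\ast u_0$. So the strong hypothesis, which you dismiss as unnecessary, is precisely what the paper relies on.

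Your weak-principle argument has a sign gap that you gloss over. As the proposition is written, the comparison principle asserts $L(u)(x)\ge 0$ at a spatial minimum; combined with $\partial_t \tilde v(x_0,t_0)\le 0$ at a spacetime minimum with $t_0>0$, the two terms have opposite signs and place no constraint on $(\partial_t+L)\tilde v$, so there is no contradiction with $(\partial_t+L)\tilde v=\epsilon>0$. Your phrase ``assigns $L\tilde v$ the sign that makes $(\partial_t+L)\tilde v\le 0$'' is exactly where the argument breaks. For $L=c(-\Delta)^s$ one in fact has $Lu\le 0$ at a minimum, so the inequality in the proposition is quite possibly mis-stated; with the sign reversed your $\epsilon t$ argument is valid and genuinely avoids both the strong hypothesis and the invertibility step. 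The paper's proof, by contrast, is indifferent to the sign of the weak inequality --- it only uses that $Lu=0$ at the minimum forces $u$ constant --- which makes it robust to this ambiguity.
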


\begin{remark}
    In particular, assumption 4 holds for the periodic Riesz and Riesz-type kernels (see \cite{garcia2022generalized, nguyen2022mean}). Assumption 4 also holds if $h := \widehat{\left(\frac{1}{\widehat{g}}\right)}$ satisfies that $h >0$ a.e. and $h(x) \leq C |x|^{s}$ for some $s<2$, since in that case $L$ can be written as 
    \begin{equation}\label{eq:inverse}
    L(u) (x) =  \frac{1}{2} \int_{\mathbf{R}^{d}} \left( -2u(x)+u(x+y) + u(x-y) \right)  h(y)  \, \mathrm d y.
    \end{equation}
\end{remark}

\begin{proof}[Proof of Proposition \ref{prop:sufficient}]
    If $g$ is the Coulomb kernel, then $L = \Delta$, and $p$ is the heat kernel, which is well-known to be positive. 

    If $L$ satisfies a comparison principle, we will prove that equation \eqref{eq:heatflow} satisfies that the minimum of $u(x,t)$ is achieved at $t=0$. To show this claim, we proceed by contrapositive. Note that, for any $u_{0}$ of bounded variation, assumption 2 of Definition \ref{def:admissible} implies that $u(x,t)$ is smooth for any $t>0$. Note also that $\lim_{t \to \infty} u(x,t) = \fint u_{0}$. In particular, if the minimum of $u$ is not achieved at $t=0$, then the minimum of $u(x,t)$ is achieved at some $(x^{*}, t^{*}) \in \mathbf{T}^{d} \times \mathbf{R}^{+}$, and $\partial_{t}u(x^{*}, t^{*}) = 0$. Since $u$ satisfies equation \eqref{eq:heatflow}, $L(u)(x^{*}, t^{*})=0$, which implies that $u(\cdot, t^{*})$ is constant by the comparison principle. Since $u \mapsto p(\cdot, s) \ast u$ is invertible, $u_{0}$ is constant. 

    Therefore, if $u_{0}$ is positive a.e., then $u_{0} \ast p(\cdot, s)$ is positive a.e. Therefore $p(\cdot, s)$ is positive almost everywhere, and assumption 4 is satisfied with $\epsilon = \infty$ and $C = 0$. 
\end{proof}

\section{Appendix B: Construction of configurations}
\label{sec:app2}

In this appendix, we prove Lemma \ref{lem:construction}, restated here:
\begin{lemma}
Let $\varphi$ be a probability density such that $\supp{\varphi} \subset \Sigma$, and let $f$ be a continuous function, and $g:\mathbf{T} \to \mathbf{R}$ be an admissible kernel. Then for any $p < \frac{1}{d}$ and $\tau >0$ there exists a family of configurations $\Lambda_{N} \subset \mathbf{T}^{d \times N}$ such that
\begin{itemize}
    \item[1.] For any $X_{N} \in \Lambda_{N}$,
    \begin{equation}
        \left| {\rm F}_{N}(X_{N}, \mu^{V}_{\infty}) - \mathcal{E}(\varphi - \mu^{V}_{\infty}) \right| \leq N^{p(s-d)}.
    \end{equation}
    
    \item[2.] \begin{equation}
        \log \left( p_{N, \beta}^{\otimes N} \left( \Lambda_{N} \right) \right) \geq -N \left( {\rm ent}[\varphi| p_{N, \beta}] + \tau \right).
    \end{equation}
    
    \item[3.] For any $X_{N} \in \Lambda_{N}$ and $\alpha>0$,
    \begin{equation}
       \left| \frac{1}{N} \sum_{i=1}^{N} f(x_{i}) - \int_{\mathbf{T}^{d}} f \varphi \, {\rm d} x \right| \leq 
        C N^{- \alpha p }  \|f\|_{W^{\alpha}}.
    \end{equation}
\end{itemize}
\end{lemma}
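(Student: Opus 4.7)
The plan is to build $\Lambda_N$ by first matching counts on a fine cell partition and then restricting to configurations with good energy and test-function behavior. I would partition $\mathbf{T}^d$ into cubes $Q_1,\ldots,Q_K$ of side length $\eta = N^{-p}$, so that $K \sim N^{pd}$ and each cube contains on average $N^{1-pd} \gg 1$ particles (this is where $p < 1/d$ enters). Choose integers $n_i$ with $|n_i - N\int_{Q_i}\varphi| \leq 1$ and $\sum_i n_i = N$, and let $M_N \subset \mathbf{T}^{d \times N}$ denote the set of ordered configurations placing exactly $n_i$ particles in $Q_i$. The final $\Lambda_N$ will be a subset of $M_N$ cut out by pointwise energy and test bounds.

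For item 2, compute
\begin{equation*}
p_{N,\beta}^{\otimes N}(M_N) = \binom{N}{n_1,\ldots,n_K}\prod_i p_i^{n_i}, \qquad p_i := \int_{Q_i} p_{N,\beta}.
\end{equation*}
Stirling's formula gives $\log \binom{N}{n_1,\ldots,n_K} = -N\sum_i (n_i/N)\log(n_i/N) + O(K\log N)$, with $K\log N = N^{pd}\log N = o(N)$ since $p < 1/d$. Substituting $n_i/N \approx \int_{Q_i}\varphi$ and recognizing the resulting Riemann sum yields $\log p_{N,\beta}^{\otimes N}(M_N) = -N\,{\rm ent}[\varphi | p_{N,\beta}] + o(N)$, which is item 2 after absorbing the sub-leading correction into $\tau$.

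For items 1 and 3, sample positions i.i.d. uniformly inside each cell conditional on the counts. The expected energy splits into cross-cell interactions, which approximate $\mathcal{E}(\varphi-\mu^V_\infty)$ via a Riemann-sum estimate away from the diagonal, and same-cell self-interactions. The admissibility bound $|g(x)|\leq C|x|^{-s}$ yields $\int_{Q_k}\int_{Q_k} g(x-y)\,\mathrm d x\,\mathrm d y \leq C\eta^{2d-s}$, so the expected same-cell contribution is of order $\frac{1}{N^2}\sum_k n_k^2 \eta^{-s} \sim \eta^{d-s} = N^{p(s-d)}$, matching the stated rate. Similarly, the expected test average $\sum_k \frac{n_k}{N|Q_k|}\int_{Q_k} f$ differs from $\int f\varphi$ by $O(\eta^\alpha \|f\|_{C^\alpha})$ on each cell, which is $O(N^{-p\alpha}\|f\|_{W^\alpha})$ by Remark \ref{rem:Holder}. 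The variances of both quantities under the uniform sampling are small compared to the stated thresholds, so Chebyshev's inequality ensures that a positive fraction of configurations in $M_N$ satisfies both deterministic bounds simultaneously; that subset is $\Lambda_N$.

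The main obstacle is the same-cell energy estimate, which dictates the scale $\eta = N^{-p}$ and reflects the constraint $p < 1/d$ through the requirement that cube populations $N\eta^d = N^{1-pd}$ be large and that the combinatorial error $K\log N$ be $o(N)$. A secondary subtlety is the test-function bound: one must first subtract the mean of $f$ (which does not change the discrepancy) so that Remark \ref{rem:Holder} can translate the $W^\alpha$-seminorm control into a cell-wise oscillation estimate, and then use $p < 1/d$ to absorb the rounding contribution $N^{pd-1}\|f\|_\infty$ into the main error $N^{-p\alpha}\|f\|_{W^\alpha}$.
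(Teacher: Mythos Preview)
Your cell decomposition, multinomial/Stirling computation for item 2, and H\"older argument for item 3 are essentially the paper's. The gap is in item 1. You plan to sample uniformly in each cell and invoke Chebyshev to extract a good subset, but Chebyshev needs the variance of $F_N(X_N,\mu^V_\infty)$ under this sampling to be finite (and in fact $O(N^{2p(s-d)})$). For two points $x_i,x_j$ drawn uniformly from a cube of side $\eta$, one has $\mathbb{E}[g(x_i-x_j)^2]\ge c\int_{|z|\le\eta}|z|^{-2s}\,\mathrm d z=+\infty$ whenever $s\ge d/2$, so the same-cell (and adjacent-cell) contributions have infinite variance and no Chebyshev bound is available. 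The admissible range $s\in(0,d)$ in Definition~\ref{def:admissible} certainly includes this regime. Replacing Chebyshev by Markov does not help either, since you need a two-sided deviation bound at the scale $N^{p(s-d)}$, not merely a bound on the mean.

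The paper avoids this entirely by building point separation into $\Lambda_N$: inside each cell $K_j$ the points are placed sequentially with an exclusion radius $\tau=a\,\bar\eta\,n_j^{-1/d}\sim N^{-1/d}$, so that \emph{every} $X_N\in\Lambda_N$ satisfies $\min_{i\ne j}|x_i-x_j|\gtrsim N^{-1/d}$. The energy is then controlled deterministically, not probabilistically: one smears each Dirac at scale $\epsilon=N^{-1/d}$ and uses the separation to guarantee that the smeared empirical measure has density $\lesssim\|\varphi\|_{L^\infty}$, which is what produces the $\epsilon^{d-s}$ error. The log-volume lost to the exclusion balls and boundary layer is only $O(aN)$ and is absorbed into $\tau$, so item 2 survives. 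If you want to keep the random-sampling picture, the fix is to condition on the separation event from the start and check that it carries almost full conditional probability; this is effectively what the paper's construction does.
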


\begin{proof}
\textbf{Step 1:} Construction 

First, we subdivide $\Omega$ into cubes $K_{j}$ of size $\overline{\eta}>0$ and center $x_{j},$ for $\overline{\eta}>0$ to be determined later. 

Let either
\begin{equation}
    n_{j} = \ceil*{n \varphi(K_{j})}\footnote{We recall the abuse of notation of not distinguishing between a measure and its density.}
\end{equation}
or 
\begin{equation}
    n_{j} = \floor*{n \varphi(K_{j})},
\end{equation}
chosen so that 
\begin{equation}
    \sum_{j} n_{j} = n.
\end{equation}

The procedure for determining the point configuration of $n_{j}$ points is: $y_{1}$ is chosen at random from $K_{j}^{\tau},$ where $K_{j}^{\tau}$ is the cube $K_{j}$ minus a boundary layer of width $\tau$, $y_{2}$ is chosen at random from 
\begin{equation}
    K_{j}^{\tau} \setminus B(y_{1}, \tau).
\end{equation}
Then, for $i = 1...n_{j},$ the point $y_{i}$ is chosen at random from
\begin{equation}
    K_{j}^{\tau} \setminus \bigcup_{l=1}^{i-1} B(y_{l}, \tau).
\end{equation}

In other words, 
\begin{equation}
     \Lambda_{\delta}^{\eta, \epsilon} = \bigcup_{\sigma \in \text{sym}[1:N]} \bigotimes_{j} \bigotimes_{i=1}^{n_{j}} \left( K_{j}^{\tau} \setminus \bigcup_{ l=1}^{i-1} B(y_{\sigma(l)}, \tau) \right).
\end{equation}

We set $\tau = a \overline{\eta} n_{j}^{-\frac{1}{d}},$ for some $a \in (0,1)$ to be determined later. For $a$ small enough, the procedure is well defined, in the sense that it is possible to choose $n_{j}$ points in this way. 

We immediately get that $d(x_{i}, \partial \Omega) > r N^{-\frac{1}{d}}, d(x_{i}, x_{j})> r N^{-\frac{1}{d}}$ for some $r>0$. We now prove that this configuration has the right volume and energy.

\textbf{Step 2:} Test error.  

Note for any $x_{1}, x_{2} \in K_{j}$, we have that
\begin{equation}
     \left| f(x_{1}) - f(x_{2}) \right| \leq C \overline{\eta}^{\alpha} \|f\|_{C^{0,\alpha}},
\end{equation}
which implies that     
\begin{equation}
       \left| \frac{1}{N} \sum_{i=1}^{N} f(x_{i}) - \int_{\mathbf{T}^{d}} f \varphi \, {\rm d} x \right| \leq C \overline{\eta}^{\alpha} \|f\|_{C^{0,\alpha}}.
\end{equation}
On the other hand, it is easy to check (see Remark \ref{rem:Holder}) that
\begin{equation}
    \|f\|_{C^{0,\alpha}} \leq C \|f\|_{W^{\alpha}}. 
\end{equation}
From this we can conclude.

\textbf{Step 3:} Volume estimate 

Note that $p_{N, \beta}$ converges to the uniform probability measure on $\Sigma$, which we denote $U$. We begin with a direct computation:
\begin{equation}
    \begin{split}
        U^{\otimes N} (\Lambda_{\delta}^{\eta, \epsilon} ) &= \frac{N!}{\Pi_{i} n_{i}!} \Pi_{j} \Pi_{p=1}^{n_{j}-1} (\overline{\eta}^{d} - k_{d}\overline{\eta}^{d-1}\tau - c_{d} p \tau^{d}) \\
        &= \frac{N!}{\Pi_{i} n_{i}!} \Pi_{j} \overline{\eta}^{d n_{j}} \Pi_{p=1}^{n_{j}-1} (1 - \frac{\tau k_{d}}{\overline{\eta}} - \frac{c_{d} p a^{d}}{n_{j}}), 
    \end{split}
\end{equation}
where $c_{d}, k_{d}$ are constants which depend only on $d$. The first term being subtracted accounts for the volume loss due to the boundary layer, while the second one accounts for the volume loss due to excluding spheres around points. On the other hand, the volume of all configurations with exactly $n_{j}$ points in cube $K_{j}$ is given by
\begin{equation}
   \frac{N!}{\Pi_{i} n_{i}!} \Pi_{j} \overline{\eta}^{d n_{j}}.
\end{equation}

Using Stirling's approximation for the factorial, we have that 
\begin{equation}
    \frac{N!}{\Pi_{i} n_{i}!} \Pi_{j} \overline{\eta}^{d n_{j}} = \exp(-N [{\rm ent}[\varphi| \mu_{N}] + o_{N}(1)]).
\end{equation}

Furthermore, we may estimate the volume loss as follows:
\begin{equation}
    \begin{split}
        \log \left( \Pi_{j}  \Pi_{p=1}^{n_{j}-1} (1 - \frac{ k_{d} \tau}{\overline{\eta}} - \frac{c_{d} p a^{d}}{n_{j}})   \right) &=  \sum_{j}  \sum_{p=1}^{n_{j}-1} \log \left(1 - \frac{ k_{d}\tau}{\overline{\eta}} - \frac{c_{d} p a^{d}}{n_{j}}\right) \\
        &\leq  a  k_{d} \sum_{j} n_{j}^{1-\frac{1}{d}} + c_{d} a^{d} \sum_{j} n_{j} \\
        & \leq C a N.
    \end{split}
\end{equation}

By making $a$ small enough, we may conclude. 

\textbf{Step 4:} Energy estimate. 

\textit{Substep 4.1:} Starting point

Given $\epsilon>0$, we denote by $\delta_{\epsilon}$ the uniform probability measure on a sphere of radius $\epsilon$. We will also use the notation ${\rm emp}_{N}^{\epsilon} := {\rm emp}_{N}^{\epsilon} \ast \delta_{\epsilon}$. Using triangle inequality, we have that
\begin{equation}
\label{eq:startingpt}
        \left| {\rm F}_{N}(X_{N}, \mu^{V}_{\infty}) - \mathcal{E}(\varphi - \mu^{V}_{\infty}) \right| \leq \left| {\rm F}_{N}(X_{N}, \mu^{V}_{\infty}) - \mathcal{E}( {\rm emp}_{N}^{\epsilon} - \mu^{V}_{\infty}) \right| + \left| \mathcal{E}( {\rm emp}_{N}^{\epsilon} - \mu^{V}_{\infty}) - \mathcal{E}(\varphi - \mu^{V}_{\infty}) \right|. 
\end{equation}

We will now get an estimate for each of the terms on the RHS of equation \eqref{eq:startingpt}. 

\textit{Substep 4.2:} Estimate for $\left| {\rm F}_{N}(X_{N}, \mu^{V}_{\infty}) - \mathcal{E}( {\rm emp}_{N}^{\epsilon} - \mu^{V}_{\infty}) \right| $.

We begin by writing
\begin{equation}
    \begin{split}
            &\left|  {\rm F}_{N}(X_{N}, \mu^{V}_{\infty}) -  \mathcal{E}( {\rm emp}_{N}^{\epsilon} - \mu^{V}_{\infty}) \right| =\\
            &\left| \mathcal{E}^{\neq} \left( {\rm emp}_{N} \right) - \mathcal{E} \left( {\rm emp}_{N}^{\epsilon} \right) + 2 \mathcal{G} \left( {\rm emp}_{N} - {\rm emp}_{N}^{\epsilon}, \mu^{V}_{\infty} \right) \right| \leq \\
            & \frac{1}{N} \mathcal{E}(\delta^{\epsilon}) + \left|  \sum_{i \neq j} \int_{\mathbf{T}^{d}} h^{\delta_{x_{i}} - \delta_{x_{i}}^{\epsilon}} \delta_{x_{j}} \right|+ \left|  \sum_{i \neq j} \int_{\mathbf{T}^{d}} h^{\delta_{x_{i}} - \delta_{x_{i}}^{\epsilon}} \delta_{x_{j}}^{\epsilon} \right|+ 2\left|  \mathcal{G} \left( {\rm emp}_{N} - {\rm emp}_{N}^{\epsilon}, \mu^{V}_{\infty} \right)\right|.
    \end{split}
\end{equation}

We first derive an estimate for $2\left|  \mathcal{G} \left( {\rm emp}_{N} - {\rm emp}_{N}^{\epsilon}, \mu^{V}_{\infty} \right)\right|$ in the case of assumption 5 a) of Definition \ref{def:admissible}. To this end, note that for any $x \in \mathbf{T}^{d}$, and any $R>0$ small enough,
\begin{equation}
    \begin{split}
        \int_{\mathbf{T}^{d}} \left(g(x) - g_{\epsilon}(x) \right) \mu_{\infty}^{V}(x) &= \int_{B(0, R)} \left(g(x) - g_{\epsilon}(x) \right) \mu_{\infty}^{V}(x) + \int_{\Sigma \setminus B(0, R)} \left(g(x) - g_{\epsilon}(x) \right) \mu_{\infty}^{V}(x) \\
        &\leq \left| \mu_{\infty}^{V} \right|_{L^{\infty}} \left( \int_{B(0, R)} \left(g(x) - g_{\epsilon}(x) \right) + \int_{\Sigma \setminus B(0, R)} \left(g(x) - g_{\epsilon}(x) \right) \right)\\
        &\leq C \left| \mu_{\infty}^{V} \right|_{L^{\infty}} \left( \int_{B(0, R + \epsilon)} \frac{1}{|x|^{s}} + \int_{\Sigma \setminus B(0, R)} \frac{\epsilon}{|x|^{s +1}} \right)\\
        &\leq C \left| \mu_{\infty}^{V} \right|_{L^{\infty}} \left( \left( R+\epsilon\right)^{ d - s} + \epsilon R^{d - s -1}  \right).
    \end{split}
\end{equation}

The optimal $R$ is given by $R = \epsilon$ and the result is that
\begin{equation}
    \left|  \mathcal{G} \left( {\rm emp}_{N} - {\rm emp}_{N}^{\epsilon}, \mu^{V}_{\infty} \right)\right| \leq C \left| \mu_{\infty}^{V} \right|_{L^{\infty}} \epsilon^{d - s}. 
\end{equation}

Proceeding similarly, we may prove that 
\begin{equation}
    \left|  \sum_{i \neq j} \int h^{\delta_{x_{i}} - \delta_{x_{i}}^{\epsilon}} \delta_{x_{j}} \right|+ \left|  \sum_{i \neq j} \int h^{\delta_{x_{i}} - \delta_{x_{i}}^{\epsilon}} \delta_{x_{j}}^{\epsilon} \right| \leq  C \left| \varphi \right|_{L^{\infty}} \epsilon^{d - s}. 
\end{equation}

On the other hand,
\begin{equation}
    \begin{split}
        \frac{1}{N} \mathcal{E}(\delta^{\epsilon}) &\leq \frac{1}{N} \int_{B(0, \epsilon)} \int_{B(0, \epsilon)} \frac{1}{|x-y|^{s}} \, \mathrm d x \, \mathrm d y \\
        &= \frac{\epsilon^{-s}}{N}.
    \end{split}
\end{equation}

Putting everything together, we get that 
\begin{equation}
    \left| {\rm F}_{N}(X_{N}, \mu^{V}_{\infty}) - \mathcal{E}( {\rm emp}_{N}^{\epsilon} - \mu^{V}_{\infty}) \right| \leq C \left( \left| \varphi \right|_{L^{\infty}} + \left| \mu_{\infty}^{V} \right|_{L^{\infty}} \right) \epsilon^{d - s} + \frac{\epsilon^{-s}}{N}.
\end{equation}
The optimal bound is achieved at $\epsilon = N^{-\frac{1}{d}}$ and the result is 
\begin{equation}
     \left| {\rm F}_{N}(X_{N}, \mu^{V}_{\infty}) - \mathcal{E}( {\rm emp}_{N}^{\epsilon} - \mu^{V}_{\infty}) \right| \leq C \left( \left| \varphi \right|_{L^{\infty}} + \left| \mu_{\infty}^{V} \right|_{L^{\infty}} \right) N^{\frac{s - d}{d}}.
\end{equation}

The procedure to estimate the term $2\left|  \mathcal{G} \left( {\rm emp}_{N} - {\rm emp}_{N}^{\epsilon}, \mu^{V}_{\infty} \right)\right|$ in the case of assumption 5 b) of Definition \ref{def:admissible} is analogous. In this case,
\begin{equation}
    \begin{split}
        \int_{\mathbf{T}^{d}} \left(g(x) - g_{\epsilon}(x) \right) \mu_{\infty}^{V}(x) &\leq C \left| \mu_{\infty}^{V} \right|_{L^{\infty}} \left( -\int_{B(0, R + \epsilon)} \log|x| + \int_{\Sigma \setminus B(0, R)} \frac{\epsilon}{|x|} \right)\\
        &\leq C \left| \mu_{\infty}^{V} \right|_{L^{\infty}} \left( \left( R+\epsilon\right)^{ d } \log  \left( R+\epsilon\right) + \epsilon R^{d -1}  \right).
    \end{split}
\end{equation}

Taking $R = \epsilon$, the result is that
\begin{equation}
    \left|  \mathcal{G} \left( {\rm emp}_{N} - {\rm emp}_{N}^{\epsilon}, \mu^{V}_{\infty} \right)\right| \leq C \left| \mu_{\infty}^{V} \right|_{L^{\infty}} \epsilon^{d} |\log \epsilon|,  
\end{equation}
and similarly, 
\begin{equation}
    \left|  \sum_{i \neq j} \int h^{\delta_{x_{i}} - \delta_{x_{i}}^{\epsilon}} \delta_{x_{j}} \right|+ \left|  \sum_{i \neq j} \int h^{\delta_{x_{i}} - \delta_{x_{i}}^{\epsilon}} \delta_{x_{j}}^{\epsilon} \right| \leq  C \left| \varphi \right|_{L^{\infty}} \epsilon^{d} |\log \epsilon|. 
\end{equation}
On the other hand,
\begin{equation}
        \frac{1}{N} \mathcal{E}(\delta^{\epsilon}) \leq C \frac{\left|\log \epsilon \right|}{N}.
\end{equation}

Taking $\epsilon = N^{-\frac{1}{d}}$ and putting everything together, we have that 
\begin{equation}
     \left| {\rm F}_{N}(X_{N}, \mu^{V}_{\infty}) - \mathcal{E}( {\rm emp}_{N}^{\epsilon} - \mu^{V}_{\infty}) \right| \leq C \left( \left| \varphi \right|_{L^{\infty}} + \left| \mu_{\infty}^{V} \right|_{L^{\infty}} \right) \frac{\log N}{N}.
\end{equation}

\textit{Substep 4.3:} Estimate for $\left| \mathcal{E}( {\rm emp}_{N}^{\epsilon} - \mu^{V}_{\infty}) - \mathcal{E}(\varphi - \mu^{V}_{\infty}) \right|$.

Using polar decomposition for the quadratic form $\mathcal{E}(\cdot)$, we get
\begin{equation}
    \begin{split}
        \left| \mathcal{E}( {\rm emp}_{N}^{\epsilon} - \mu^{V}_{\infty}) - \mathcal{E}(\varphi - \mu^{V}_{\infty}) \right| &=  \left| \mathcal{G}( {\rm emp}_{N}^{\epsilon} -\varphi , {\rm emp}_{N}^{\epsilon} +\varphi - 2 \mu^{V}_{\infty} ) \right|\\
        \leq 4 \left\| h^{{\rm emp}_{N}^{\epsilon} -\varphi} \right\|_{L^{\infty}}. 
    \end{split}
\end{equation}

The procedure to bound $\left\| h^{{\rm emp}_{N}^{\epsilon} -\varphi} \right\|_{L^{\infty}}$ is similar to the last substep: in the case of assumption 5 a) of Definition \ref{def:admissible}, we take $x \in \mathbf{T}^{d}$ and $R>0$ and write
\begin{equation}
    \begin{split}
        &\int_{\mathbf{T}^{d}} g(x-y) ({\rm emp}_{N}^{\epsilon} -\varphi)(y) \, \mathrm d y \\
        =& \int_{B(x, R)}  g(x-y) ({\rm emp}_{N}^{\epsilon} -\varphi)(y) \, \mathrm d y  + \int_{\Sigma \setminus B(x, R)}  g(x-y) ({\rm emp}_{N}^{\epsilon} -\varphi)(y) \, \mathrm d y  \\
        \leq & C \left| \varphi \right|_{L^{\infty}} \left( \int_{B(0, R + \overline{\eta})} \frac{1}{|x|^{s}} + \int_{\Sigma \setminus B(0, R)} \frac{\overline{\eta}}{|x|^{s +1}} \right)\\
        \leq & C \left| \varphi \right|_{L^{\infty}} \left( \left( R+\overline{\eta} \right)^{ d - s} + \overline{\eta} R^{d - s -1}  \right).
    \end{split}
\end{equation}

The optimal $R$ is given by $R = \overline{\eta}$ and the result is that
\begin{equation}
    \left| \mathcal{E}( {\rm emp}_{N}^{\epsilon} - \mu^{V}_{\infty}) - \mathcal{E}(\varphi - \mu^{V}_{\infty}) \right| \leq C \left| \varphi \right|_{L^{\infty}} \overline{\eta}^{d - s}. 
\end{equation}

In the case of assumption 5 b) of Definition \ref{def:admissible},the result is that
\begin{equation}
    \left| \mathcal{E}( {\rm emp}_{N}^{\epsilon} - \mu^{V}_{\infty}) - \mathcal{E}(\varphi - \mu^{V}_{\infty}) \right| \leq C \left| \varphi \right|_{L^{\infty}} \overline{\eta}^{d} \left| \log \overline{\eta} \right|. 
\end{equation}

\textit{Substep 4.4:} Conclusion

Since the construction requires that $ N^{-\frac{1}{d}} \ll \overline{\eta} $, we conclude that for any $p > \frac{ s - d}{d}$,
\begin{equation}
     \left| {\rm F}_{N}(X_{N}, \mu^{V}_{\infty}) - \mathcal{E}(\varphi - \mu^{V}_{\infty}) \right| \leq N^{p},
\end{equation}
where $s$ is taken to be $0$ in the case of assumption 5 b) of Definition \ref{def:admissible}.
\end{proof}

\section{Acknowledgements}

We acknowledge the support from the German Research Foundation (DFG) via the research unit FOR 3013 “Vector- and tensor-valued surface PDEs” (grant no. NE2138/3-1). We are also very grateful to David Garcia-Zelada for conversations that were necessary for the completion of this work. 

\bibliographystyle{plain}
\bibliography{bibliography.bib}

\begin{thebibliography}{10}

\bibitem{armstrong2021local}
Scott Armstrong and Sylvia Serfaty.
\newblock Local laws and rigidity for coulomb gases at any temperature.
\newblock {\em The Annals of Probability}, 49(1):46--121, 2021.

\bibitem{armstrong2022thermal}
Scott Armstrong and Sylvia Serfaty.
\newblock Thermal approximation of the equilibrium measure and obstacle
  problem.
\newblock {\em Annales de la Facult{\'e} des sciences de Toulouse:
  Math{\'e}matiques}, 31(4):1085--1110, 2022.

\bibitem{bauerschmidt2019two}
Roland Bauerschmidt, Paul Bourgade, Miika Nikula, and Horng-Tzer Yau.
\newblock The two-dimensional coulomb plasma: quasi-free approximation and
  central limit theorem.
\newblock {\em Advances in Theoretical and Mathematical Physics},
  23(4):841--1002, 2019.

\bibitem{bekerman2018clt}
Florent Bekerman, Thomas Lebl{\'e}, and Sylvia Serfaty.
\newblock Clt for fluctuations of $\beta$-ensembles with general potential.
\newblock {\em Electronic Journal of Probability}, 23:1--31, 2018.

\bibitem{berman2014determinantal}
Robert~J Berman.
\newblock Determinantal point processes and fermions on complex manifolds:
  large deviations and bosonization.
\newblock {\em Communications in Mathematical Physics}, 327(1):1--47, 2014.

\bibitem{berman2019sharp}
Robert~J Berman.
\newblock Sharp deviation inequalities for the 2d coulomb gas and quantum hall
  states, i.
\newblock {\em arXiv preprint arXiv:1906.08529}, 2019.

\bibitem{bourgade2012bulk}
Paul Bourgade, L{\'a}szl{\'o} Erd{\H{o}}s, and Horng-Tzer Yau.
\newblock Bulk universality of general $\beta$-ensembles with non-convex
  potential.
\newblock {\em Journal of mathematical physics}, 53(9):095221, 2012.

\bibitem{bourgade2014universality}
Paul Bourgade, L{\'a}szl{\'o} Erd{\H{o}}s, and Horng-Tzer Yau.
\newblock Universality of general $\beta$-ensembles.
\newblock {\em Duke Mathematical Journal}, 163(6):1127--1190, 2014.

\bibitem{bourgade2014local}
Paul Bourgade, Horng-Tzer Yau, and Jun Yin.
\newblock Local circular law for random matrices.
\newblock {\em Probability Theory and Related Fields}, 159(3):545--595, 2014.

\bibitem{boursier2021optimal}
Jeanne Boursier.
\newblock Optimal local laws and clt for the circular riesz gas.
\newblock {\em arXiv preprint arXiv:2112.05881}, 2021.

\bibitem{chafai2014first}
Djalil Chafa{\"\i}, Nathael Gozlan, and Pierre-Andr{\'e} Zitt.
\newblock First-order global asymptotics for confined particles with singular
  pair repulsion.
\newblock {\em The Annals of Applied Probability}, 24(6):2371--2413, 2014.

\bibitem{chafai2018concentration}
Djalil Chafa{\"\i}, Adrien Hardy, and Myl{\`e}ne Ma{\"\i}da.
\newblock Concentration for coulomb gases and coulomb transport inequalities.
\newblock {\em Journal of Functional Analysis}, 275(6):1447--1483, 2018.

\bibitem{garcia2019concentration}
David Garc{\'\i}a-Zelada.
\newblock Concentration for coulomb gases on compact manifolds.
\newblock {\em Electronic Communications in Probability}, 24:1--18, 2019.

\bibitem{garcia2019large}
David Garc{\'\i}a-Zelada.
\newblock A large deviation principle for empirical measures on polish spaces:
  Application to singular gibbs measures on manifolds.
\newblock In {\em Annales de l'Institut Henri Poincar{\'e}, Probabilit{\'e}s et
  Statistiques}, volume~55, pages 1377--1401. Institut Henri Poincar{\'e},
  2019.

\bibitem{garcia2022generalized}
David Garc{\'\i}a-Zelada and David Padilla-Garza.
\newblock Generalized transport inequalities and concentration bounds for
  riesz-type gases.
\newblock {\em arXiv preprint arXiv:2209.00587}, 2022.

\bibitem{georgii2011gibbs}
Hans-Otto Georgii.
\newblock Gibbs measures and phase transitions.
\newblock In {\em Gibbs Measures and Phase Transitions}. de Gruyter, 2011.

\bibitem{hardy2021clt}
Adrien Hardy and Gaultier Lambert.
\newblock Clt for circular beta-ensembles at high temperature.
\newblock {\em Journal of Functional Analysis}, 280(7):108869, 2021.

\bibitem{johansson1998fluctuations}
Kurt Johansson.
\newblock On fluctuations of eigenvalues of random hermitian matrices.
\newblock {\em Duke mathematical journal}, 91(1):151--204, 1998.

\bibitem{lambert2021mesoscopic}
Gaultier Lambert.
\newblock Mesoscopic central limit theorem for the circular $\beta$-ensembles
  and applications.
\newblock {\em Electronic Journal of Probability}, 26:1--33, 2021.

\bibitem{lambert2021poisson}
Gaultier Lambert.
\newblock Poisson statistics for gibbs measures at high temperature.
\newblock In {\em Annales de l'Institut Henri Poincar{\'e}, Probabilit{\'e}s et
  Statistiques}, volume~57, pages 326--350. Institut Henri Poincar{\'e}, 2021.

\bibitem{lambert2019quantitative}
Gaultier Lambert, Michel Ledoux, and Christian Webb.
\newblock Quantitative normal approximation of linear statistics of
  $\beta$-ensembles.
\newblock {\em The Annals of Probability}, 47(5):2619--2685, 2019.

\bibitem{leble2017large}
Thomas Lebl{\'e} and Sylvia Serfaty.
\newblock Large deviation principle for empirical fields of log and riesz
  gases.
\newblock {\em Inventiones mathematicae}, 210(3):645--757, 2017.

\bibitem{leble2018fluctuations}
Thomas Lebl{\'e} and Sylvia Serfaty.
\newblock Fluctuations of two dimensional coulomb gases.
\newblock {\em Geometric and Functional Analysis}, 28(2):443--508, 2018.

\bibitem{maida2014free}
Myl{\`e}ne Ma{\"\i}da and {\'E}douard Maurel-Segala.
\newblock Free transport-entropy inequalities for non-convex potentials and
  application to concentration for random matrices.
\newblock {\em Probability Theory and Related Fields}, 159(1-2):329--356, 2014.

\bibitem{nguyen2022mean}
Quoc~Hung Nguyen, Matthew Rosenzweig, and Sylvia Serfaty.
\newblock Mean-field limits of riesz-type singular flows.
\newblock {\em Ars Inveniendi Analytica}, 2022.

\bibitem{padilla2022large}
David Padilla-Garza.
\newblock Large deviations principle for the tagged empirical field of a
  general interacting gas.
\newblock {\em arXiv preprint arXiv:2208.01516}, 2022.

\bibitem{padilla2023concentration}
David Padilla-Garza.
\newblock Concentration inequality around the thermal equilibrium measure of
  coulomb gases.
\newblock {\em Journal of Functional Analysis}, 284(1):109733, 2023.

\bibitem{peilen2024local}
Luke Peilen.
\newblock Local laws and a mesoscopic clt for $\beta$-ensembles.
\newblock {\em Communications on Pure and Applied Mathematics}, 2024.

\bibitem{petrache2017next}
Mircea Petrache and Sylvia Serfaty.
\newblock Next order asymptotics and renormalized energy for riesz
  interactions.
\newblock {\em Journal of the Institute of Mathematics of Jussieu},
  16(3):501--569, 2017.

\bibitem{rosenzweig2023global}
Matthew Rosenzweig and Sylvia Serfaty.
\newblock Global-in-time mean-field convergence for singular riesz-type
  diffusive flows.
\newblock {\em The Annals of Applied Probability}, 33(2):954--998, 2023.

\bibitem{rotskoff2018parameters}
Grant Rotskoff and Eric Vanden-Eijnden.
\newblock Parameters as interacting particles: long time convergence and
  asymptotic error scaling of neural networks.
\newblock {\em Advances in neural information processing systems}, 31, 2018.

\bibitem{rotskoff2022trainability}
Grant Rotskoff and Eric Vanden-Eijnden.
\newblock Trainability and accuracy of artificial neural networks: An
  interacting particle system approach.
\newblock {\em Communications on Pure and Applied Mathematics},
  75(9):1889--1935, 2022.

\bibitem{rotskoff2018neural}
Grant~M Rotskoff and Eric Vanden-Eijnden.
\newblock Neural networks as interacting particle systems: Asymptotic convexity
  of the loss landscape and universal scaling of the approximation error.
\newblock {\em stat}, 1050:22, 2018.

\bibitem{rougerie2016higher}
Nicolas Rougerie and Sylvia Serfaty.
\newblock Higher-dimensional coulomb gases and renormalized energy functionals.
\newblock {\em Communications on Pure and Applied Mathematics}, 69(3):519--605,
  2016.

\bibitem{rougerie2015incompressibility}
Nicolas Rougerie and Jakob Yngvason.
\newblock Incompressibility estimates for the laughlin phase.
\newblock {\em Communications in Mathematical Physics}, 336(3):1109--1140,
  2015.

\bibitem{ruelle1967variational}
David Ruelle.
\newblock A variational formulation of equilibrium statistical mechanics and
  the gibbs phase rule.
\newblock {\em Communications in Mathematical Physics}, 5:324--329, 1967.

\bibitem{ruelle1968statistical}
David Ruelle.
\newblock Statistical mechanics of a one-dimensional lattice gas.
\newblock {\em Communications in Mathematical Physics}, 9:267--278, 1968.

\bibitem{sandier20152d}
Etienne Sandier and Sylvia Serfaty.
\newblock 2d coulomb gases and the renormalized energy.
\newblock {\em The Annals of Probability}, 43(4):2026--2083, 2015.

\bibitem{serfaty2015coulomb}
Sylvia Serfaty.
\newblock {\em Coulomb gases and Ginzburg--Landau vortices}.
\newblock 2015.

\bibitem{serfaty2017microscopic}
Sylvia Serfaty.
\newblock Microscopic description of log and coulomb gases.
\newblock {\em arXiv preprint arXiv:1709.04089}, 2017.

\bibitem{serfaty2023gaussian}
Sylvia Serfaty.
\newblock Gaussian fluctuations and free energy expansion for coulomb gases at
  any temperature.
\newblock In {\em Annales de l'Institut Henri Poincare (B) Probabilites et
  statistiques}, volume~59, pages 1074--1142. Institut Henri Poincar{\'e},
  2023.

\bibitem{wen2024coupling}
Yuxiao Wen, Eric Vanden-Eijnden, and Benjamin Peherstorfer.
\newblock Coupling parameter and particle dynamics for adaptive sampling in
  neural {G}alerkin schemes.
\newblock {\em Physica D: Nonlinear Phenomena}, page 134129, 2024.

\end{thebibliography}

\end{document}